\pdfoutput=1
\documentclass[11pt]{article}

\usepackage[margin=1in]{geometry}
\usepackage[T1]{fontenc}
\usepackage[utf8]{inputenc}

\usepackage{booktabs}
\usepackage{subfigure}
\usepackage{subcaption}
\usepackage[running, mathlines]{lineno}
\usepackage{braket,amsfonts,amssymb}
\usepackage{enumitem,mathtools}
\usepackage{xcolor}
\usepackage{colortbl}
\usepackage{amsmath}
\usepackage{amsthm}
\usepackage{array}
\usepackage{wrapfig}
\usepackage{pgfplots}
\usepackage{algorithmic}
\usepackage{tikz}
\usepackage{graphicx,epstopdf}
\usepackage{mathrsfs}
\usepackage{amsopn}
\usepackage{xspace}
\usepackage{bold-extra}
\usepackage[most]{tcolorbox}
\usepackage{caption}
\usepackage{color}
\usepackage{etoolbox}
\usepackage{titlesec}
\usepackage[colorlinks=true,linkcolor=blue,citecolor=blue,urlcolor=blue]{hyperref}
\usepackage[nameinlink,capitalize]{cleveref}
\usepackage{listings}
\newcommand{\mymin}{\scalebox{0.55}{$\min$}}
\newcommand{\mymax}{\scalebox{0.55}{$\max$}}

\AtBeginDocument{
\addtolength{\abovedisplayskip}{-0.8ex}
\addtolength{\abovedisplayshortskip}{-0.8ex}
\addtolength{\belowdisplayskip}{-0.8ex}
\addtolength{\belowdisplayshortskip}{-0.8ex}
}

\makeatletter
\def\thm@space@setup{\thm@preskip=3pt \thm@postskip=3pt}
\makeatother

\titlespacing*{\section}{0pt}{0.6\baselineskip}{0.2\baselineskip}
\titlespacing*{\subsection}{0pt}{0.6\baselineskip}{0.2\baselineskip}

\numberwithin{equation}{section}
\numberwithin{table}{section}
\numberwithin{figure}{section}

\newcommand{\myin}{\scalebox{0.7}{\text{in}}}
\newcommand{\EQ}{\begin{equation}}
\newcommand{\EN}{\end{equation}}
\newcommand{\EQS}{\begin{equation*}}
\newcommand{\ENS}{\end{equation*}}
\newcommand{\EQA}{\begin{eqnarray}}
\newcommand{\ENA}{\end{eqnarray}}
\newcommand{\EQAS}{\begin{eqnarray*}}
\newcommand{\ENAS}{\end{eqnarray*}}
\newcommand{\AL}{\begin{align}}
\newcommand{\AN}{\end{align}}
\newcommand{\ALS}{\begin{align*}}
\newcommand{\ANS}{\end{align*}}

\newcommand{\mysumsmall}{\sum}
\newcommand{\mysum}{\sum}

\newcommand{\myblue}{\color{black}}

\definecolor{lightblue}{rgb}{0.3, 0.0, 0.9}

\newcommand{\winf}{w_{\scalebox{0.6}{-$\infty$}}}

\newcommand{\Ebb}{\mathbb{E}}
\newcommand{\Nbb}{\mathbb{N}}

\newcommand{\ds}{\displaystyle}
\def\r{\right}
\def\l{\left}

\newcommand{\Rbb}{\mathbb{R}}

\newcommand{\myout}{\scalebox{0.7}{\text{out}}}

\newtheorem{theorem}{Theorem}[section]
\newtheorem{lemma}[theorem]{Lemma}
\newtheorem{proposition}[theorem]{Proposition}
\newtheorem{corollary}[theorem]{Corollary}

\newtheorem{assumption}{Assumption}[section]
\newtheorem{definition}{Definition}[section]

\theoremstyle{remark}
\newtheorem{remark}[theorem]{Remark}

\crefname{hypothesis}{Hypothesis}{Hypotheses}
\Crefname{hypothesis}{Hypothesis}{Hypotheses}
\Crefname{ALC@unique}{Line}{Lines}

\colorlet{texcscolor}{blue!50!black}
\colorlet{texemcolor}{red!70!black}
\colorlet{texpreamble}{red!70!black}
\colorlet{codebackground}{black!25!white!25}

\lstdefinestyle{siamlatex}{%
  style=tcblatex,
  texcsstyle=*\color{texcscolor},
  texcsstyle=[2]\color{texemcolor},
  keywordstyle=[2]\color{texemcolor},
  moretexcs={cref,Cref,maketitle,mathcal,text,email,url},
}

\tcbset{%
  colframe=black!75!white!75,
  coltitle=white,
  colback=codebackground,
  colbacklower=white,
  fonttitle=\bfseries,
  arc=0pt,outer arc=0pt,
  top=1pt,bottom=1pt,left=1mm,right=1mm,middle=1mm,boxsep=1mm,
  leftrule=0.3mm,rightrule=0.3mm,toprule=0.3mm,bottomrule=0.3mm,
  listing options={style=siamlatex}
}

\newtcblisting[use counter=example]{example}[2][]{%
  title={Example~\thetcbcounter: #2},#1}

\newtcbinputlisting[use counter=example]{\examplefile}[3][]{%
  title={Example~\thetcbcounter: #2},listing file={#3},#1}

\DeclareTotalTCBox{\code}{ v O{} }
{
  fontupper=\ttfamily\color{black},
  nobeforeafter,
  tcbox raise base,
  colback=codebackground,colframe=white,
  top=0pt,bottom=0pt,left=0mm,right=0mm,
  leftrule=0pt,rightrule=0pt,toprule=0mm,bottomrule=0mm,
  boxsep=0.5mm,
  #2}{#1}

\patchcmd\newpage{\vfil}{}{}{}
\title{Monotone 2D Integration Scheme for Mean--CVaR Optimization via Fourier-Trained Transition Kernels}

\author{
Duy-Minh Dang\thanks{School of Mathematics and Physics, The University of Queensland, St Lucia, Brisbane 4072, Australia (\href{mailto:duyminh.dang@uq.edu.au}{duyminh.dang@uq.edu.au}).}
\and
Hao Zhou \thanks{School of Mathematics and Physics, The University of Queensland, St Lucia, Brisbane 4072, Australia (\href{mailto:h.zhou3@student.uq.edu.au}{h.zhou3@student.uq.edu.au}).}
}

\date{}

\begin{document}
\maketitle

\begin{abstract}
We present a strictly monotone, provably convergent two-dimensional (2D) integration method for multi-period mean--conditional value-at-risk (mean--CVaR) reward--risk stochastic control in models whose one-step increment law is specified via a closed-form characteristic function (CF). When the transition density is unavailable in closed form, we learn a nonnegative, normalized 2D transition kernel in Fourier space using a simplex-constrained Gaussian-mixture parameterization, and discretize the resulting convolution integrals with composite quadrature rules with nonnegative weights to guarantee monotonicity. The scheme is implemented efficiently using 2D fast Fourier transforms.
Under mild Fourier-tail decay assumptions on the CF, we derive Fourier-domain $L_2$ kernel-approximation and truncation error estimates and translate them into real-space bounds that are used to establish $\ell_\infty$-stability, consistency, and pointwise convergence as the discretization and kernel-approximation parameters vanish. Numerical experiments for a fully coupled 2D jump--diffusion model in a multi-period portfolio optimization setting illustrate robustness and accuracy.
\end{abstract}

\vspace{0.5em}
\noindent\textbf{Keywords.}
reward--risk stochastic control, mean--CVaR, monotone schemes, characteristic functions, Fourier-trained transition kernels

\medskip
\noindent\textbf{MSC2020.}
65M12, 65T50, 93E20, 60E10

\section{Introduction}
\label{sec:intro}
Reward--risk optimization arises in many settings where decisions trade off performance against adverse outcomes.
Such formulations appear in environmental and resource management \cite{labadie2004optimal}, supply-chain planning \cite{kleindorfer2005managing}, and more broadly in engineering and applied decision-making \cite{filippi2020conditional}. In finance, the same trade-off is central in multi-period portfolio choice and retirement saving under market and macroeconomic uncertainty \cite{forsyth2020multiperiod}.

A widely used tail-risk measure in such formulations is conditional value-at-risk (CVaR), also known as expected shortfall \cite{rockafellar2000optimization}.
It quantifies tail risk via a conditional expectation beyond a quantile threshold and admits a threshold-based representation that is computationally convenient.
In reward--risk optimization, the reward is typically an expected expected gain or payoff, making it natural to pair this mean term with a CVaR penalty.
The resulting mean--CVaR objective is widely used in multi-period stochastic control \cite{forsyth2020multiperiod,miller2017optimal}.

From a numerical perspective, value functions arising in stochastic control, including those induced by mean--CVaR criteria, are typically nonsmooth.
Since optimal decisions are obtained by comparing numerical value functions across admissible controls, convergence of the discretization is essential.
In particular, if the scheme is not monotone, the computed value function may converge to the wrong solution, leading to unreliable policies
\cite{barles-souganidis:1991, oberman2006convergent}.
This motivates the design of monotone schemes for reward--risk control problems.

Monotone schemes are central to the convergence theory of Barles--Souganidis \cite{barles-souganidis:1991}; however, their construction and analysis are most fully developed in one-dimensional (1D) models, both with and without jumps.
Constructing strictly monotone schemes in multiple dimensions is substantially more challenging, especially under correlation and nonlocal jump effects, and robust treatments of multi-dimensional integro-differential terms remain difficult \cite{MaForsyth2015}.

In many popular stochastic models, the transition density is not available in closed form, while its Fourier transform, i.e.\ the characteristic function (CF) of the
incrementsm is available analytically \cite{Fang2008}.
When CFs are known, Fourier-based methods can then handle complex dynamics efficiently and provide an attractive alternative to finite differences.
Among these, the Fourier-cosine (COS) method is particularly notable for achieving high-order convergence in piecewise smooth problems \cite{Fang2008,ruijter2012two}.
However, for general stochastic control problems, which often involve nonsmooth structures, such convergence is typically unattainable 
\cite{lu2024semi}.
More importantly, standard Fourier discretizations may lose monotonicity, leading to spurious value functions and unreliable policies \cite{du2025fourier}.

These challenges have motivated strictly monotone integration methods that enforce nonnegative conditional density representations \cite{zhang2024monotone}.
Like Fourier-based approaches, they leverage transform-domain information.
however, existing constructions typically rely on additional structure, for example, effectively 1D dynamics or simplified multi-dimensional models with closed-form nonnegative densities, and are available only in limited 2D settings \cite{zhou2025numerical,dang2025monotone}.

This paper develops a strictly monotone and provably convergent 2D integration framework for multi-period mean--CVaR optimization in fully coupled 2D models.
Our main contribution is to enforce strict monotonicity without requiring an explicit closed-form transition density, by learning a nonnegative 2D transition kernel directly from the increment law via its CF.
Building on the Fourier-domain density-learning idea in \cite{du2025fourier}, we approximate the kernel using a Gaussian-mixture representation fitted to the CF.
Constraining the mixture weights to the probability simplex ensures nonnegativity and normalization by construction.
The resulting 2D convolution integrals are discretized using composite quadrature rules with nonnegative weights, yielding a strictly monotone scheme that can be implemented efficiently using 2D FFTs.

A second contribution is a Fourier-to-real-space analysis that underpins convergence of the scheme. Leveraging the explicit Fourier form of both the Gaussian-mixture kernel and the CF, we derive Fourier-domain $L_2$ error estimates and translate them into real-space bounds. Under mild Fourier-tail decay assumptions,
these bounds are used to establish $\ell_\infty$-stability, consistency, and pointwise convergence as the discretization and kernel-approximation parameters vanish.

We keep the mean--CVaR formulation and modelling assumptions broad: between decision times the 2D increment law is specified via a closed-form CF with mild Fourier-tail decay. This setting covers a wide class of coupled 2D models where transition densities are unavailable but CFs are tractable.
To illustrate the method, we consider a multi-period portfolio optimization problem calibrated to long-horizon market data.
More generally, the proposed framework extends to other reward--risk stochastic control problems with discrete interventions and \mbox{translation-invariant kernels.}

\section{Modelling}
\label{sec:modelling}
We work on a filtered probability space $(\Omega,\mathcal{F},\{\mathcal{F}_t\}_{0 \le t \le T},\mathbb{P})$ over a finite horizon $T>0$.

\paragraph{Intervention times.}
We fix equally spaced decision times in $[0,T]$:
\begin{equation}
\label{eq:T_M}
\mathcal{T}=\{t_m \mid t_m=m\Delta t,\ m=0,\ldots,M\},\qquad \Delta t=T/M,
\end{equation}
with $t_0=0$. For any process $\{Z_t\}_{t\in [0, T]}$ with left/right limits at
$\{t_m\}$, we write
$Z_m^-:=\lim_{\varepsilon\to 0^+}Z(t_m-\varepsilon)$ and $Z_m^+:=\lim_{\varepsilon\to 0^+}Z(t_m+\varepsilon)$.

\paragraph{State variables and log representation.}
Let $A_t^s$ and $A_t^b$ denote two nonnegative state components at time $t\in[0,T]$ (e.g.\ two resource/inventory levels, two account values, or two components of a controlled system).

Intervention actions occur only at decision times. At each $t_m$, \mbox{$m=0,\ldots,M-1$,}
the state may be updated first by a deterministic exogenous input processed at $t_m^-$
(assumed nonnegative; e.g.\ an external injection/replenishment)
and then by an intervention applied at $t_m^+$. At $t_M=T$, no further intervention
is applied and a terminal reward functional is evaluated. Between decision times, i.e.\ on each interval $[t_m^+,t_{m+1}^-]$, no interventions are applied and the dynamics evolve according to the (uncontrolled) law specified below.

To work in log coordinates, we impose a strictly positive floor at decision times: after the contribution/intervention at $t_m$, we enforce
$A_{t_m^+}^s,A_{t_m^+}^b \ge e^{\winf}$ \mbox{for fixed $\winf\ll 0$.} Hence, for $t\in[t_m^+,t_{m+1}^-]$,
\[
S_t=\ln A_t^s,\qquad B_t=\ln A_t^b,\qquad X_t=(S_t,B_t)\in\Rbb^2.
\]
We write $x=(s,b)$ for a generic state, $y=(y_s,y_b)$ for an integration variable, and $\eta=(\eta_s,\eta_b)$ for its Fourier (frequency) counterpart.

\paragraph{Increment law and Fourier transform.}
On each inter-decision interval $[t_m^+,t_{m+1}^-]$, we define the one-step increment
\begin{equation}
\label{eq:DeltaX}
(\Delta X)_m \equiv ((\Delta S)_m,(\Delta B)_m):=X_{m+1}^- - X_m^+.
\end{equation}
We assume the conditional law of $(\Delta X)_m$ given $X_m^+$ depends only on $\Delta t$ and is translation-invariant; in particular, the law of $(\Delta X)_m$ does not depend on $m$. Let $\Delta X$ denote a generic one-step increment with this common law and define its CF
\[
G(\eta;\Delta t):=\mathbb{E}\!\left[e^{\,i\,\eta\cdot \Delta X}\right],\qquad \eta\in\Rbb^2.
\]
When $\Delta X$ admits a density $g(\cdot;\Delta t)$ on $\Rbb^2$, $g$ and $G$ form a Fourier pair:
\begin{equation}
\label{eq:FT_pair}
G(\eta;\Delta t)=\int_{\Rbb^2}e^{\,i\,\eta\cdot y}\,g(y;\Delta t)\,dy,\qquad
g(y;\Delta t)=\frac{1}{(2\pi)^2}\int_{\Rbb^2}e^{-i\,\eta\cdot y}\,G(\eta;\Delta t)\,d\eta.
\end{equation}

\begin{assumption}[Modelling assumptions]
\label{ass:modelling}
For each $\Delta t>0$, the uncontrolled one-step increment $\Delta X$ admits a density $g(\cdot;\Delta t)$ on $\Rbb^2$, and:
\begin{enumerate}[noitemsep, topsep=2pt,label=(A\arabic*), leftmargin=*]
\item (Homogeneous one-step kernel.)
For all $m$ and all $x\in\Rbb^2$, the conditional density of $X_{m+1}^-$ given $X_m^+=x$ is $y\mapsto g(y-x;\Delta t)$ (time-homogeneous and translation-invariant).

\item (Regularity and boundedness.)
$g(\cdot;\Delta t)\in L_1(\Rbb^2)\cap L_\infty(\Rbb^2)$.

\item (Closed-form CF.)
$G(\eta;\Delta t)$ is available in closed form for all $\eta\in\Rbb^2$.

\item (Fourier tail control.)
There exist $\alpha\in(0,2]$, $c_0>0$, and $R_{\mathrm{tail}}\ge 0$ such that
$|G(\eta;\Delta t)| \le \exp(-c_0\,\Delta t\,\|\eta\|_2^\alpha)$ for all $\|\eta\|_2\ge R_{\mathrm{tail}}$.

\item (Exponential moments.)
$G(-i,0;\Delta t)$, $G(0,-i;\Delta t)$, and $G(-i,-i;\Delta t)$ are finite, and $G(0,0;\Delta t)=1$.
\end{enumerate}
\end{assumption}

\begin{remark}[Examples and scope]
Assumption~\ref{ass:modelling} covers a broad class of translation invariant increment laws with tractable CFs.
In particular, it holds for many 2D exponential--L\'{e}vy and jump--diffusion models with closed-form characteristic functions, including correlated Brownian increments, Merton \cite{merton1976option} and Kou \cite{kou01} jump--diffusions with a nondegenerate Gaussian component, Normal--Inverse--Gaussian \cite{barndorff1997normal}, tempered-stable and CGMY models with $Y\in(0,2)$ \cite{carr2002fine}, and finite Gaussian mixtures \cite{mclachlan2019finite}, including variants with co-jumps.
Condition (A4) provides the decay needed for Fourier truncation, while (A5) ensures the exponential tilts used in
boundary propagation are admissible.
\end{remark}

\section{Mean--CVaR optimization}
This section specifies a two-component reward--risk stochastic control problem with discrete interventions.
The notation follows an allocation setting in which an aggregate level is redistributed between two nonnegative components.
Allocating inventory between two storage locations (or water between two reservoirs) is one concrete example.

\subsection{Intervention map}
\label{ssc:allocation}
Suppose the system is in state $x=(s,b)$ at time $t_m^-$ (immediately before the deterministic exogenous input $q_m\ge 0$), with aggregate level
$W_m^- = e^s + e^b$. Immediately after the input, at time $t_m^+$, the aggregate level is
\begin{equation}
\label{eq:cash}
W_m^+ \;=\; W_m^- + q_m \;=\; e^s+e^b+q_m,\qquad m = 0, \ldots M-1.
\end{equation}
An admissible Markov feedback control at $t_m$ is any Borel--measurable map
\EQ
\label{eq:control_def}
u_m:\ \Rbb^2\to \mathcal{Z},\qquad (s,b)\mapsto u_m(s,b).
\EN
Here, $\mathcal Z:=[0,1]$ is the set of admissible actions:
$u_m$ is interpreted as the fraction of $W_m^+$ assigned to the first component, and $1-u_m$ to the second (a simplex-type constraint).
For brevity, when the arguments are clear we write $u_m:=u_m(s,b)$.

After applying $u_m$, the post-intervention levels of the two components are
$u_m W_m^+$ and $(1-u_m)W_m^+$, respectively, with $W_m^+$ given by \eqref{eq:cash}.
To work in log coordinates, introduce a strictly positive floor $e^{\winf}$ with fixed
$\winf\ll 0$. The post--intervention log--state is $X_m^+=(S_m^+,B_m^+)$, where
\begin{equation}
\label{eq:state-update}
\begin{aligned}
S_m^+&:=s^{+}(s,b,q_m,u_m)
=\ln\bigl(\max\{\,u_m W_m^+,\,e^{\winf}\}\bigr),\\[2pt]
B_m^+&:=b^{+}(s,b,q_m,u_m)
=\ln\bigl(\max\{(1-u_m) W_m^+,\,e^{\winf}\}\bigr).
\end{aligned}
\end{equation}
An admissible policy is $\mathcal U_0=\{u_m\}_{m=0}^{M-1}$ with each $u_m$ as in
\eqref{eq:control_def}. We denote the set of such policies by $\mathcal A$, and write
$\mathcal U_m=\{u_{m'}\}_{m'=m}^{M-1}$ for the tail policy applied from $t_m$ onwards.

\subsection{Mean--CVaR formulation}
\label{ssc:formulation}
Although CVaR is often defined for losses, we apply it here to the terminal outcome
$W_T$, adopting the convention that larger values indicate better performance.
Equivalently, one may apply the standard loss-based CVaR to $-W_T$. Fix $\alpha \in (0,1)$. Following \cite{RT2000, miller2017optimal}, for any policy
$\mathcal{U}_0\in\mathcal A$, the CVaR of $W_T$ at level $\alpha$ admits the threshold representation
\begin{eqnarray}
\text{CVaR}_{\alpha,\,  \mathcal{U}_0}^{x_0,t_0^-}
&=&
  \sup_{w \,\ge 0}\,  \Ebb_{\mathcal{U}_0}^{x_0,\,t_{0}^-}
  \!\big[
      w ~+~
      \tfrac{1}{\alpha}\,\min\!\bigl(W_{T} - w,\,0\bigr)
    ~\big],
\label{eq: CVaR_optimization}
\end{eqnarray}
where the feasible set $w\in[0,\infty)$ matches the attainable range of $W_T$.

Using a scalarization parameter $\gamma>0$, we adopt a \emph{pre-commitment} mean--CVaR objective: at inception $t_0^-$ the decision maker selects a policy $\mathcal{U}_0\in\mathcal A$ and a CVaR threshold $w\ge 0$ to maximize the criterion, and then commits to $\mathcal{U}_0$ thereafter.
The pre-commitment mean--CVaR value function is
\begin{equation}
\label{eq:PCEC}
\mathcal{V}(x_0, t_0^{-})=
 \sup_{\mathcal{U}_0 \in \mathcal{A}}\, \sup_{w \ge 0}\,
 \Ebb_{\mathcal{U}_0}^{x_0,\,t_{0 }^{-}}
 \Big[ W_T + \gamma\Big(w+\tfrac{1}{\alpha} \min \left(W_T-w, 0\right)\Big)\Big].
\end{equation}
Since $\mathcal U_0$ and $w$ range over independent sets and $w$ enters only through the terminal objective functional, the iterated suprema commute:
\begin{equation}
\label{eq:PCEC_interchange}
\mathcal{V}(x_0, t_0^{-})=
 \sup_{w \ge 0}\, \sup_{\mathcal{U}_0 \in \mathcal{A}}\,
 \Ebb_{\mathcal{U}_0}^{x_0,\,t_{0 }^{-}}
 \Big[ W_T + \gamma\Big(w+\tfrac{1}{\alpha} \min \left(W_T-w, 0\right)\Big)\Big],
\end{equation}
subject to Assumption~\ref{ass:modelling} on modelling, the exogenous inputs
\eqref{eq:cash}, and the intervention map \eqref{eq:state-update}.
\begin{remark}[Finiteness of $\Ebb_{\mathcal{U}_0}^{x_0,t_0^-}\lbrack W_T\rbrack$ ]
\label{rmk:finiteness_W}
Under Assumption~\ref{ass:modelling}~(A5), we have \mbox{$\Ebb\!\left[e^{\Delta S}\right]=G(-i,0;\Delta t)<\infty$,}
and $\Ebb\!\left[e^{\Delta B}\right]=G(0,-i;\Delta t)<\infty$.
Since interventions are restricted to $u\in[0,1]$ and the exogenous inputs $\{q_m\}$ are deterministic and finite, these moment bounds propagate over the finite horizon to yield $\Ebb_{\mathcal U_0}^{x_0,t_0^-}[W_T]<\infty$ uniformly over admissible policies $\mathcal U_0\in\mathcal A$.
\end{remark}
\begin{lemma}[Existence of a finite optimal threshold]
\label{lem:CVaR_threshold_existence}
Fix $\gamma>0$. Then the outer optimization in \eqref{eq:PCEC_interchange} is finite and is
attained by some $w^*(x_0)\in[0,\infty)$.
\end{lemma}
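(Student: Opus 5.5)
Define the outer objective
\[
F(w):=\sup_{\mathcal U_0\in\mathcal A}\Ebb_{\mathcal U_0}^{x_0,t_0^-}\Big[W_T+\gamma\big(w+\tfrac1\alpha\min(W_T-w,0)\big)\Big],\qquad w\ge 0.
\]
The plan is to show that $F$ is finite, continuous (in fact Lipschitz), and that its supremum over $[0,\infty)$ is attained on a compact interval $[0,\bar w]$. Weierstrass's theorem then gives $w^*(x_0)$.

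\textbf{Finiteness and continuity.} Since $\min(W_T-w,0)\le 0$, the integrand is at most $W_T+\gamma w$. By Remark~\ref{rmk:finiteness_W}, $C:=\sup_{\mathcal U_0\in\mathcal A}\Ebb_{\mathcal U_0}^{x_0,t_0^-}[W_T]<\infty$, so $F(w)\le C+\gamma w<\infty$. For a lower bound, $W_T\ge 0$ gives $\min(W_T-w,0)\ge -w$, so $F(w)\ge \gamma w(1-1/\alpha)>-\infty$. For continuity, the map $w\mapsto w+\tfrac1\alpha\min(W_T-w,0)$ is Lipschitz with constant $\max(1,1/\alpha-1)\le 1/\alpha$, uniformly in $W_T$. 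Taking expectations and then the supremum over policies preserves this, so $|F(w)-F(w')|\le \tfrac{\gamma}{\alpha}|w-w'|$.

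\textbf{Coercivity (the main step).} I need $F(w)\to-\infty$, or at least $F(w)< F(0)$, for large $w$. For each policy, write $w+\tfrac1\alpha\min(W_T-w,0)=w-\tfrac1\alpha(w-W_T)^+$. Since $\alpha<1$ and $(w-W_T)^+\ge w-W_T$,
\[
w-\tfrac1\alpha(w-W_T)^+\le w-\tfrac1\alpha(w-W_T)=\big(1-\tfrac1\alpha\big)w+\tfrac1\alpha W_T .
\]
Hence, uniformly over policies,
\[
\Ebb[\cdot]\le \big(1+\tfrac\gamma\alpha\big)\Ebb[W_T]+\gamma\big(1-\tfrac1\alpha\big)w,
\]
and therefore $F(w)\le (1+\gamma/\alpha)C-\gamma(\tfrac1\alpha-1)w\to-\infty$. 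The delicate point is exactly that this bound must be uniform over $\mathcal A$, which is why the uniform moment bound of Remark~\ref{rmk:finiteness_W} is essential. Once it is in hand, the argument is elementary. One also needs $F(0)$ finite, with $F(0)=\sup_{\mathcal U_0}\Ebb[W_T](1)+0\ge 0$.

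\textbf{Conclusion.} Choose $\bar w$ so large that $(1+\gamma/\alpha)C-\gamma(\tfrac1\alpha-1)\bar w<F(0)$. Then $\sup_{w\ge0}F=\sup_{[0,\bar w]}F$, and this value is finite. Since $F$ is continuous on the compact interval $[0,\bar w]$, it attains its maximum at some $w^*(x_0)\in[0,\bar w]$. This $w^*(x_0)$ attains the outer supremum in \eqref{eq:PCEC_interchange}, and the value is finite.
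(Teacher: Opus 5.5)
Your proposal is correct and follows essentially the same route as the paper: you use the uniform linear bound $F(w)\le(1+\gamma/\alpha)K+\gamma(1-1/\alpha)w$, obtained from $\min(W_T-w,0)\le W_T-w$ and the uniform moment bound, then Lipschitz continuity of $F$, reduction to a compact interval $[0,\bar w]$, and Weierstrass. Your version is slightly more careful than the paper's, since it adds the lower bound $F(w)\ge\gamma(1-1/\alpha)w$ (from $W_T\ge 0$) to justify finiteness and chooses $\bar w$ explicitly by comparison with $F(0)$.
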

A proof of Lemma~\ref{lem:CVaR_threshold_existence} is given in Appendix~\ref{app:CVaR_threshold_existence}.
In particular, Lemma~\ref{lem:CVaR_threshold_existence} (together with Remark~\ref{rmk:finiteness_W}) implies that the objective functional in \eqref{eq:PCEC}--\eqref{eq:PCEC_interchange} is finite (and hence well-defined).

\paragraph{Lifted-state recursion (fixed $w$).}
We reformulate \eqref{eq:PCEC_interchange} as an equivalent lifted-state problem
\cite{miller2017optimal}. For fixed $w\ge 0$, define the
auxiliary value on the augmented state $(x,w,t)$, $x=(s,b)$:
\begin{equation}
\label{eq:lifted_value_def}
V(x,w,t_m^-)\;:=\;\sup_{\mathcal U_m}\ \Ebb_{\mathcal U_m}^{x,t_m^-}\!\Big[
\,W_T+\gamma\big(w+\tfrac{1}{\alpha}\min(W_T-w,0)\big)\Big].
\end{equation}
Terminal condition: denoting the terminal reward functional by $\Phi(x,w)$, we have
\begin{equation}
\label{eq:terminal}
V(x,w,T^-)=\Phi(x,w):=(e^s+e^b)+\gamma\!\big(
w+\tfrac{1}{\alpha}\min\!\big((e^s+e^b)-w,0\big)\big),~ x=(s,b).
\end{equation}
Between $[t_m^+,t_{m+1}^-]$, $m=M-1,\ldots,0$, propagate via the uncontrolled kernel $g(\cdot;\Delta t)$:
\begin{equation}
\label{eq:integral}
V(x,w,t_m^+)\;=\;\int_{\Rbb^2} V(y,w,t_{m+1}^-)\; g(y-x;\Delta t)\,dy.
\end{equation}
At $t_m$, define the intervention operator for any test functional $F$
and control $u\in\mathcal Z$,
\begin{equation}
\label{eq:Operator_M}
(\mathcal M_u F)(x,w,t_m^-; q_m)\;:=\;F\big(s^+(x, q_m, u),\,b^+(x,q_m, u),\,w,\,t_m^+\big),
\end{equation}
where $(s^+(\cdot),b^+(\cdot))$ as in \eqref{eq:state-update}.
The optimal control (for fixed $w$) is
\begin{equation}
\label{eq:control}
u_m^\ast(\cdot;w) \in
\operatorname*{arg\,max}_{u\in\mathcal Z}\;
(\mathcal M_u V)(x,w,t_m^-;q_m).
\end{equation}
The pre-intervention auxiliary value at $t_m^-$ is
\begin{equation}
\label{eq:intervention}
V(x,w,t_m^-)\;=\;(\mathcal M_{u_m^\ast(\cdot;w)} V)(x,w,t_m^-; q_m).
\end{equation}
At inception, select the optimal threshold and value:
\begin{equation}
\label{eq:wstar_value}
w^\ast=\operatorname*{arg\,sup}_{w\ge 0}V(x_0,w,t_0^-),
\qquad
V(x_0,t_0^-)=V(x_0,w^\ast,t_0^-).
\end{equation}
The pre-commitment optimal policy is
$\mathcal{U}_0^{\ast}=\{u_0^{\ast}(\cdot\,;w^\ast),\ldots,u_{M-1}^{\ast}(\cdot\,;w^\ast)\}$.

\begin{proposition}[Equivalence to the pre-commitment problem]
\label{prop:equiv_lifted}
The lifted recursion \eqref{eq:lifted_value_def}–\eqref{eq:wstar_value} is equivalent to
\eqref{eq:PCEC_interchange}; see \cite{miller2017optimal}.
\end{proposition}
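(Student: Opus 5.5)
The plan is to fix the threshold $w\ge 0$ and show that the backward recursion \eqref{eq:terminal}--\eqref{eq:intervention} computes the inner supremum $\sup_{\mathcal U_0\in\mathcal A}\Ebb_{\mathcal U_0}^{x_0,t_0^-}[\Phi(X_T,w)]$ in \eqref{eq:PCEC_interchange}. For each fixed $w$ this is a standard finite-horizon, discrete-time Markov control problem, so we will argue by a dynamic programming principle. Taking $\sup_{w\ge0}$ on both sides then gives \eqref{eq:wstar_value}. The supremum over $w$ is attained, and finite, by Lemma~\ref{lem:CVaR_threshold_existence}. Finally, because the suprema commute, \eqref{eq:PCEC_interchange} coincides with \eqref{eq:PCEC}.

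\textbf{Step 1: objective for a fixed tail policy.} For fixed $w$ and a fixed tail policy $\mathcal U_m$, let $J(x,w,t_m^-;\mathcal U_m)$ denote the expected terminal reward. We prove by backward induction on $m$ that $J$ satisfies the same recursion with $u_m$ held fixed instead of optimized. The induction uses three facts:
\begin{itemize}
\item the state update \eqref{eq:state-update} is deterministic given $(x,q_m,u_m)$;
\item by (A1), the conditional law of $X_{m+1}^-$ given $X_m^+$ has density $g(\cdot-x;\Delta t)$;
\item the tower property, applied first at $\mathcal F_{t_m^+}$ and then at $\mathcal F_{t_{m+1}^-}$, gives $J(x,w,t_m^+)=\int J(y,w,t_{m+1}^-)g(y-x;\Delta t)\,dy$.
\end{itemize}
Integrability holds because $|\Phi(x,w)|\le C(1+w+e^s+e^b)$. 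Remark~\ref{rmk:finiteness_W}, which rests on (A5), bounds $\Ebb[W_T]$ uniformly over policies, so every conditional expectation is finite.

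\textbf{Step 2: optimality.} We show by backward induction that $V(\cdot,w,t_m^-)$ defined by the recursion equals $\sup_{\mathcal U_m}J(\cdot,w,t_m^-;\mathcal U_m)$. The upper bound is straightforward. By the induction hypothesis, $J(\cdot,t_{m+1}^-;\mathcal U_{m+1})\le V(\cdot,t_{m+1}^-)$. Positivity of the kernel $g$ then gives $J(\cdot,t_m^+)\le V(\cdot,t_m^+)$, and composing with $\mathcal M_{u_m}$ and maximizing over $u_m\in\mathcal Z$ yields $J\le V$ at $t_m^-$. The reverse inequality needs a measurable selection of maximizers $u_m^*(x;w)$ in \eqref{eq:control}. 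I would establish this by induction as well, carrying a continuity property along:
\begin{itemize}
\item $\Phi$ is continuous in $x$;
\item the convolution with $g\in L_1$ preserves continuity and the growth bound, by dominated convergence using the exponential moments in (A5);
\item $u\mapsto(s^+,b^+)$ is continuous on the compact set $\mathcal Z=[0,1]$, since the max with $e^{\winf}$ is continuous.
\end{itemize}
Then $(x,u)\mapsto(\mathcal M_uV)(x,w,t_m^-)$ is continuous, a maximizer exists for each $x$, and a Borel selector exists by a measurable maximum theorem (Kuratowski--Ryll-Nardzewski). Using these selectors, $J(\cdot;\mathcal U_m^*)=V$, so the supremum is attained.

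\textbf{Main obstacle.} I expect the delicate part to be the regularity carried through the induction, specifically continuity of $V(\cdot,w,t_m^+)$ with at most exponential growth, since this is what the measurable-selection argument needs. If continuity proved troublesome, I would instead use an $\varepsilon$-optimal measurable selection. Upper semicontinuity is enough for that, and it still yields the supremum equality.

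Once this is in place, the outer step is immediate, since by Lemma~\ref{lem:CVaR_threshold_existence} the supremum of $V(x_0,w,t_0^-)$ over $w\ge0$ is attained. Alternatively, one can simply cite the dynamic programming argument of \cite{miller2017optimal}.
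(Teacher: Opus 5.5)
Your plan is correct and is essentially the argument the paper defers to via \cite{miller2017optimal}: for fixed $w$, the inner supremum in \eqref{eq:PCEC_interchange} is a standard finite-horizon Markov control problem whose value is produced by the backward recursion through dynamic programming, and the outer supremum over $w$ (attained by Lemma~\ref{lem:CVaR_threshold_existence}) then gives \eqref{eq:wstar_value}. The paper gives no detail beyond the citation, so your induction (carrying continuity and the growth bound $C(1+e^s+e^b)$ via the exponential moments in (A5) to get a Borel selector of maximizers) fills in that standard route faithfully; the one implicit point is that (A1) must be read as a Markov transition kernel with respect to $\mathcal F_{t_m^+}$, which the paper's recursion \eqref{eq:integral} already presumes.
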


\subsection{Localization and problem statement}
\label{subsec:localization}
Since $w^{\ast}$ is finite by Lemma~\ref{lem:CVaR_threshold_existence}, we truncate the threshold domain to $\Gamma =[0,\,w_{\max}]$, where $w_{\max}>0$ is chosen sufficiently large. We localize the $(s,b)$–domain to the rectangle
\begin{equation}
\label{eq:omega}
\Omega \;=\; \big[s_{\min}^{\dagger},\, s_{\max}^{\dagger}\big]\times \big[b_{\min}^{\dagger},\, b_{\max}^{\dagger}\big],
\end{equation}
with \;$s_{\min}^{\dagger} < s_{\min} < 0 < s_{\max} < s_{\max}^{\dagger}$\; and \;$b_{\min}^{\dagger} < b_{\min} < 0 < b_{\max} < b_{\max}^{\dagger}$\; chosen so that boundary truncation errors are negligible (see, e.g.~\cite{zhang2024monotone, DangForsyth2014}).
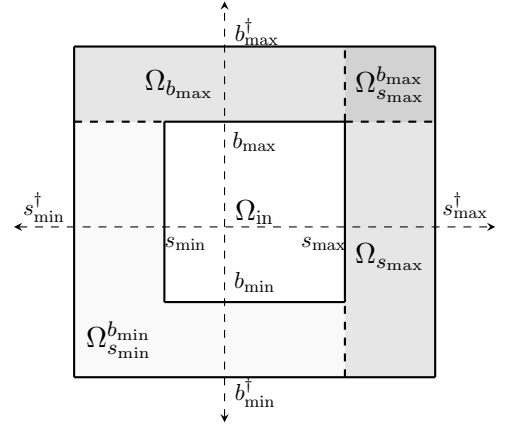
\begin{figure}[ht]
\centering
\begin{minipage}{0.58\textwidth}
We partition $\Omega$ into the interior and four boundary sub-domains:
\begin{equation}
\label{eq:omega_in}
\begin{aligned}
\Omega_{\myin} &= (s_{\min}, s_{\max}) \times (b_{\min}, b_{\max}),\\
\Omega_{s_{\max}} &= [s_{\max}, s_{\max}^{\dagger}] \times [b_{\min}^{\dagger}, b_{\max}),\\
\Omega_{s_{\max}}^{b_{\max}} &= [s_{\max},\,s_{\max}^{\dagger}] \times [b_{\max},\,b_{\max}^{\dagger}],\\
\Omega_{b_{\max}} &= [s_{\min}^{\dagger}, s_{\max}) \times [b_{\max}, b_{\max}^{\dagger}],\\
\Omega_{s_{\min}}^{b_{\min}} &= \Omega \setminus \Omega_{\myin} \setminus \Omega_{s_{\max}}
\setminus \Omega_{b_{\max}} \setminus \Omega_{s_{\max}}^{b_{\max}},\\
\Omega_{\myout} &:= \Omega \setminus \Omega_{\myin}.
\end{aligned}
\end{equation}
An illustration of the sub-domains for the localized problem
at each $t$ (fixed $w\in\Gamma$) is given in Figure~\ref{fig:domain}.
\end{minipage}
\hfill
\begin{minipage}{0.38\linewidth}
\centering
\begin{tikzpicture}[scale=0.4]
   \fill[gray!5] (-2,-3.5) rectangle (1,5);    
   \fill[gray!5] (-2,-3.5) rectangle (7,-1);   

   \fill[gray!20] (7,-3.5)  rectangle (10,5);

   \fill[gray!20] (-2,5)    rectangle (7,7.5);

   \fill[gray!35] (7,5)     rectangle (10,7.5);

   \draw [dashed, line width=0.4pt] [stealth-stealth] (-4,1.5) -- (12,1.5);
   \draw [dashed, line width=0.4pt] [stealth-stealth] (3,-5) -- (3,9);

   \draw [thick]  (1,5) -- (7,5);
   \draw [thick]  (1,-1) -- (7,-1);
   \draw [thick]  (1,-1) -- (1,5);
   \draw [thick]  (7,-1) -- (7,5);

   \draw [thick](-2,-3.5) -- (-2,7.5);
   \draw [thick](-2,7.5) -- (10,7.5);
   \draw [thick](10,-3.5) -- (10,7.5);
   \draw [thick](-2,-3.5) -- (10,-3.5);

   \draw [dashed, line width=0.8pt] (7,-3.5) -- (7,-1);
   \draw [dashed, line width=0.8pt] (7,5)   -- (7,7.5);
   \draw [dashed, line width=0.8pt] (-2,5)  -- (1,5);
   \draw [dashed, line width=0.8pt] (7,5)   -- (10,5);

   \node at (4,2) {\scalebox{1.0}{$\Omega_{\myin}$}};
   \node at (8.5,0.5)  {\scalebox{1.0}{$\Omega_{s_{\max}}$}};
   \node at (1.5,6.25) {\scalebox{1.0}{$\Omega_{b_{\max}}$}};
   \node at (-0.5,-2.25) {\scalebox{1.0}{$\Omega_{s_{\min}}^{b_{\min}}$}};
   \node at ( 8.5, 6.25) {\scalebox{1.0}{$\Omega_{s_{\max}}^{b_{\max}}$}};

   \node [below] at (-3,3)  {\scalebox{0.8}{$s_{\min}^{\dagger}$}};
   \node [below] at (11,3)  {\scalebox{0.8}{$s_{\max}^{\dagger}$}};
   \node [right] at (3,-4)  {\scalebox{0.8}{$b_{\min}^{\dagger}$}};
   \node [right] at (3,8)   {\scalebox{0.8}{$b_{\max}^{\dagger}$}};
   \node [below] at (1.7,1.5)  {\scalebox{0.8}{$s_{\min}$}};
   \node [below] at (6.3,1.5)  {\scalebox{0.8}{$s_{\max}$}};
   \node [above] at (4,-1)     {\scalebox{0.8}{$b_{\min}$}};
   \node [below] at (4,5)      {\scalebox{0.8}{$b_{\max}$}};
\end{tikzpicture}
\caption{Spatial sub-domains at each $t$ (fixed $w\in\Gamma$).}
\label{fig:domain}
\end{minipage}
\vspace*{-0.25cm}
\end{figure}
On $\Omega_{\myout}$, the terminal reward \eqref{eq:terminal} suggests exponential leading-order
forms ($e^s$, $e^b$, $e^{s+b}$, or a constant) depending on which boundary is approached.
The next lemma shows that these forms propagate over one time step via explicit CF factors.

\begin{lemma}[One–step propagation for exponential functions]
\label{lem:asympBC}
Let $\phi:\Rbb^2\times[0,T]\to\Rbb$.
Under Assumption~\ref{ass:modelling}\,(A5), for any
$ a \in\{(1,0),(0,1),(1,1),(0,0)\}$, if at $t_{m+1}^-$,
$\phi(x,t_{m+1}^-)=A(t_{m+1}^-)\,e^{\, a \cdot x}$,
where $A(t)$ denotes a generic unknown function of time,
then applying the convolution integral \eqref{eq:integral} with $\phi$ in place of $V$ yields
\[
\phi(x, t_m^+)=
\int_{\Rbb^2} \phi(y,t_{m+1}^-)\; g(y-x;\Delta t)\,dy
= G(-i\,a;\Delta t)\,\phi(x, t_{m+1}^-),
\]
where $G(-i\,a;\Delta t)\in(0,\infty)$ by Assumption~\ref{ass:modelling}\,(A5).
\end{lemma}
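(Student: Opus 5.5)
The plan is a direct computation: a change of variables in the convolution integral \eqref{eq:integral}, after which the remaining integral is recognized as the characteristic function $G$ evaluated at the imaginary argument $-i\,a$. Fix $a\in\{(1,0),(0,1),(1,1),(0,0)\}$ and suppose $\phi(y,t_{m+1}^-)=A(t_{m+1}^-)\,e^{a\cdot y}$. Substituting $z=y-x$ (unit Jacobian, translation invariance from (A1)) gives
\[
\int_{\Rbb^2} A(t_{m+1}^-)\,e^{a\cdot y}\,g(y-x;\Delta t)\,dy
= A(t_{m+1}^-)\,e^{a\cdot x}\int_{\Rbb^2} e^{a\cdot z}\,g(z;\Delta t)\,dz
= \phi(x,t_{m+1}^-)\,\Ebb\big[e^{a\cdot \Delta X}\big].
\]
So it remains to identify $\Ebb[e^{a\cdot\Delta X}]$ with $G(-i\,a;\Delta t)$ and to show that this number is finite and strictly positive.

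For the identification, formally set $\eta=-i\,a$ in $G(\eta;\Delta t)=\Ebb[e^{i\eta\cdot\Delta X}]$. Then $i\eta\cdot\Delta X=i(-i)\,a\cdot\Delta X=a\cdot\Delta X$, so $G(-i\,a;\Delta t)=\Ebb[e^{a\cdot \Delta X}]$. I interpret $G$ at imaginary arguments as the analytic continuation given by the same closed form, which is the convention implicit in (A5). Finiteness of that expectation is exactly what (A5) asserts for $a=(1,0)$, $(0,1)$, $(1,1)$. The case $a=(0,0)$ reduces to $G(0,0;\Delta t)=\int g=1$.

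To make the argument rigorous rather than formal:
\begin{itemize}
\item The integrand $e^{a\cdot z}g(z;\Delta t)$ is nonnegative and measurable, so Tonelli applies and the integral is well defined in $[0,\infty]$.
\item The change of variables is therefore legitimate with no integrability assumption needed in advance.
\item Finiteness then follows from (A5).
\item Strict positivity holds because $g\ge 0$, $\int g=1$ by (A2) and (A5), and $e^{a\cdot z}>0$. Hence $e^{a\cdot z}g(z;\Delta t)$ is positive on a set of positive measure, which gives $G(-i\,a;\Delta t)\in(0,\infty)$.
\end{itemize}

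The main obstacle is the meaning of $G$ at complex arguments, since (A3) states closed-form availability only for real $\eta$. I would handle this by defining $G(-i\,a;\Delta t):=\int e^{a\cdot z}g(z;\Delta t)\,dz$, which is finite by (A5). This value agrees with the analytic continuation of the closed form on the strip where the moment generating function is finite, by the standard analyticity of Laplace–Fourier transforms there. No further subtlety arises, since $A(t)$ is just a constant factor that passes through the integral.
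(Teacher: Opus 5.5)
Your proposal is correct and follows the paper's proof, which is exactly the change of variables $y=x+z$ identifying $\int_{\Rbb^2} e^{a\cdot z}g(z;\Delta t)\,dz$ with $G(-i\,a;\Delta t)$. Your extra details fill in what the paper leaves implicit, since (A5) by itself gives only finiteness: you define $G(-i\,a;\Delta t)$ directly as this nonnegative integral, and you derive strict positivity from $g\ge 0$ and $\int g=1$.
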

\begin{proof}
A change of variables $y=x+z$ gives $\int_{\Rbb^2} e^{a\cdot z} g(z;\Delta t)\,dz=G(-i\,a;\Delta t)$.
\end{proof}
We encode the boundary type by the selector $a:\Omega_{\myout}\to\{(1,0),(0,1),(1,1),(0,0)\}$,
\begin{equation}
\label{eq:ax}
a(x)=(1,0)\mathbf{1}_{\Omega_{s_{\max}}}(x)+(0,1)\mathbf{1}_{\Omega_{b_{\max}}}(x)+
(1,1)\mathbf{1}_{\Omega_{s_{\max}}^{b_{\max}}}(x)+(0,0)
\mathbf{1}_{\Omega_{s_{\min}}^{b_{\min}}}(x).
\end{equation}
Define the boundary operator
\begin{equation}
\label{eq:bd_operator}
(\mathcal{B}V)(x, w, t_m^+)
\;:=\;G\big(-i\,a(x);\Delta t\big)\;V(x, w, t_{m+1}^-),
\qquad x\in\Omega_{\myout}.
\end{equation}
No intervention is applied on $\Omega_{\myout}$, so we also carry values from $t_m^+$ to $t_m^-$ on the boundary. Hence, for $(x,t_m^{\pm}) \in \Omega_{\myout} \times\{t_m^{\pm}\}$,
\begin{equation}
\label{eq:boundary_carry}
V(x, w, t_m^-) = V(x, w, t_m^+)= (\mathcal B V)(x,w,t_m^+).
\end{equation}
On the interior $\Omega_{\myin}$,  \eqref{eq:integral} is approximated by the truncated 2-D convolution
\begin{equation}
\label{eq:integral_trunc_2d}
V\bigl(x, w, t_m^+\bigr)
~\simeq~
\int_{\Omega}V(y, w, t_{m+1}^-)\, g(y-x;\Delta t)\,dy,
\qquad x \in \Omega_{\myin},
\end{equation}
where $x=(s,b)$, $y=(y_s,y_b)$, and $dy\equiv dy_s\,dy_b$.

\begin{definition}[Localized Mean--CVaR formulation]
\label{def:mean_cvar_localized}
The localized value function at time $t_0^{-}$ is $V(x_0, w^{\ast}, t_0^-)$, where
$w^{\ast}$ is obtained by the outer search \eqref{eq:wstar_value}, and
$\mathcal{U}_0^{\ast}=\bigl\{u_0^{\ast}(\cdot\, ;w^{\ast}), \ldots, u_{M-1}^{\ast}(\cdot\, ; w^{\ast})\bigr\}$ is the associated optimal control.

For each fixed $w\in\Gamma$, the function $V(x, w, t)$ on $\Omega\times\mathcal T$ is specified by:
(i)~terminal condition \eqref{eq:terminal};
(ii)~boundary propagation \eqref{eq:boundary_carry} on $\Omega_{\myout}$ for $m=M-1,\ldots,0$;
(iii)~interior propagation \eqref{eq:integral_trunc_2d} on $\Omega_{\myin}$ for $m=M-1,\ldots,0$;
and (iv)~interior intervention \eqref{eq:control}--\eqref{eq:intervention} (with no intervention on $\Omega_{\myout}$).
\end{definition}

\begin{remark}[Uniqueness and regularity of the localized inner problem]
\label{rm:uniqueness}
For each fixed $w\in\Gamma$ and each $m=0,\ldots,M$, the localized Bellman recursion admits a unique bounded solution on $\Omega$; moreover $V(\cdot,w,t_m^-)$ is continuous in $(s,b)$ on $\Omega_{\myin}$ (cf.~\cite[Proposition~3.1]{zhang2024monotone}).
\end{remark}

\section{Fourier-trained transition kernel approximation}
\label{sec:fournet}
For notational simplicity, we suppress the explicit dependence on $\Delta t$ and write
$g(\cdot)\equiv g(\cdot;\Delta t):\Rbb^2\to\Rbb$ for the (uncontrolled) transition density,
with CF  $G(\cdot)\equiv G(\cdot;\Delta t)$.
We adopt the same convention for the {\myblue{kernel}} approximation $\widehat g(\cdot)$ and its {\myblue{CF}} $\widehat G(\cdot)$.

\subsection{Description}
We approximate $g(\cdot)$ by a single--hidden--layer FFNN with Gaussian activation
\cite{du2025fourier}. In {\myblue{2D}} it is convenient to parameterize the network as a
finite Gaussian mixture. Specifically, for $y=(y_s,y_b)\in\Rbb^2$ we take
\begin{equation}
\label{eq:act_func}
\phi(y;\mu,\Sigma)
:= \frac{1}{(2\pi)\,|\Sigma|^{1/2}}\,
\exp\!\Big(-\tfrac12\,(y-\mu)^\top \Sigma^{-1}(y-\mu)\Big),
\qquad \mu\in\Rbb^2,\ \ \Sigma\in\Rbb^{2\times 2},
\end{equation}
and define the class of single-layer FFNNs with Gaussian activation by
\begin{equation}
\label{eq:Sigma_class}
\Sigma(\phi)
=\Big\{
\widehat{g}:\Rbb^2\to\Rbb\ \Big|\ \
\widehat{g}(y;\theta)=\sum_{n=1}^{N}\beta_n\,\phi(y;\mu_n,\Sigma_n),\ \theta\in\Theta,\ N\in\Nbb
\Big\},
\end{equation}
where
\begin{equation}
\label{eq:mu_Sigma}
\mu_n=(\mu_n^s,\,\mu_n^b)\in\Rbb^2,\qquad
\Sigma_n=\begin{pmatrix}
(\sigma^s_n)^2 & \rho_n\,\sigma^s_n\sigma^b_n\\[2pt]
\rho_n\,\sigma^s_n\sigma^b_n & (\sigma^b_n)^2
\end{pmatrix},
\end{equation}
with $\sigma^s_n>0$, $\sigma^b_n>0$, and $\rho_n\in(-1,1)$, so each $\Sigma_n$ is symmetric
positive--definite.

\paragraph{Bounded parameter space and positivity.}
By Assumptions~\ref{ass:modelling}\,(A1)--(A2), $g(\cdot;\Delta t)\in L_1(\Rbb^2)\cap L_2(\Rbb^2)$.
The approximation results in \cite[Cor.~3.2, Thm.~3.4]{du2025fourier} imply that
single--hidden--layer Gaussian networks are $L_2$--dense on $L_1\cap L_2$ even with bounded parameters.
We therefore restrict to the bounded parameter set
\begin{equation}
\label{eq:Theta_bounds}
\begin{aligned}
\Theta
=\big\{
\theta &=\{(\beta_n,\mu_n^s,\mu_n^b,\sigma^s_n,\sigma^b_n,\rho_n)\}_{n=1}^{N}:\
\beta_n\ge 0,\ \ \textstyle\sum_{n=1}^N \beta_n = 1, \\
&\qquad |\mu_n^s|,\,|\mu_n^b|\le \overline\mu,\quad
0< \sigma_{\min}\le \sigma^s_n,\sigma^b_n\le \sigma_{\max},\quad
|\rho_n|\le \overline\rho<1
\big\},
\end{aligned}
\end{equation}
for fixed finite $\overline\mu,\sigma_{\max}$, positive $\sigma_{\min}$, and $0<\overline\rho<1$.
Under \eqref{eq:Theta_bounds}, each $\Sigma_n$ is uniformly positive--definite. Moreover,
since $\phi(\cdot;\mu_n,\Sigma_n)\ge 0$ integrates to one and $\{\beta_n\}$ lies on the probability simplex,
$\widehat g(\cdot;\theta)$ is a proper density: $\widehat g\ge 0$ and $\int_{\Rbb^2}\widehat g=1$.

\paragraph{{\myblue{Closed--form CF.}}}
Using the Fourier pair in \eqref{eq:FT_pair}, the {\myblue{CF}} of $\widehat g(\cdot)$ is available in closed form.
For $\eta=(\eta_s,\eta_b)\in\Rbb^2$,
\begin{equation}
\label{eq:Gb_closed_form}
\widehat{G}(\eta;\theta)
=\int_{\Rbb^2}e^{\,i\,\eta\cdot y}\, \widehat{g}(y;\theta)\,dy
=\sum_{n=1}^{N}\beta_n\,\exp\!\Big(
i\,\eta^\top \mu_n-\tfrac12\, \eta^\top \Sigma_n \eta \Big).
\end{equation}

\paragraph{$L_2$-approximation error and Fourier-domain invariance.}
By Assumption~\ref{ass:modelling}\,(A2), $g(\cdot)\in L_1(\Rbb^2)\cap L_\infty(\Rbb^2)$ and hence $g(\cdot)\in L_2(\Rbb^2)$.
The Fourier transform is an $L_2$ isometry up to a constant factor (see, e.g.\
\cite{yosida1968functional}):
if $f\in L_2(\Rbb^2)$ with Fourier transform $\mathfrak{F}[f]$ under the convention \eqref{eq:FT_pair}, then
\begin{equation}
\label{eq:Fourier_L2_isometry}
\int_{\Rbb^2}|f(y)|^2\,dy \;=\; \frac{1}{(2\pi)^2}\int_{\Rbb^2}|\mathfrak{F} [f](\eta)|^2\,d\eta.
\end{equation}
Combining \eqref{eq:Fourier_L2_isometry} with \cite[Thm.~3.4]{du2025fourier} yields the {\myblue{2D}} approximation result below.
\begin{theorem}[$L_2$-approximation and Fourier-domain invariance; cf.\ \cite{du2025fourier}, Thm.~3.4]
\label{thm:fournet_existence}
For any $\varepsilon>0$, there exists $\widehat{g}(\cdot;\theta^\star_\varepsilon)\in\Sigma(\phi)$
with $\theta^\star_\varepsilon\in\Theta$ such that
\begin{equation}
\label{eq:plancherel_twoD}
\int_{\Rbb^2}\!\big|\,g(y)-\widehat{g}(y;\theta^\star_\varepsilon)\,\big|^2\,dy
\;=\;\frac{1}{(2\pi)^2}\int_{\Rbb^2}\!\big|\,G(\eta)-\widehat{G}(\eta;\theta^\star_\varepsilon)\,\big|^2\,d\eta
\;<\;\varepsilon,
\end{equation}
where $\widehat{G}(\eta;\theta)$ is given in \eqref{eq:Gb_closed_form}.
\end{theorem}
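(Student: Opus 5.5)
The argument has two parts. First we establish $L_2$-density of the constrained class $\{\widehat g(\cdot;\theta):\theta\in\Theta\}$ around $g$. Then we transfer the real-space error to the Fourier side by the isometry \eqref{eq:Fourier_L2_isometry}.

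\textbf{Membership in $L_2$.} By Assumption~\ref{ass:modelling}\,(A2), $g\in L_1(\Rbb^2)\cap L_\infty(\Rbb^2)$, so $\|g\|_{L_2}^2\le \|g\|_{L_\infty}\|g\|_{L_1}<\infty$. Every $\widehat g(\cdot;\theta)$ with $\theta\in\Theta$ is a finite mixture of Gaussian densities whose covariances are uniformly positive definite, so it is bounded and integrable. Hence $f:=g-\widehat g(\cdot;\theta)$ lies in $L_1\cap L_2$, and its Fourier transform under \eqref{eq:FT_pair} is $G-\widehat G(\cdot;\theta)$ by linearity and \eqref{eq:Gb_closed_form}.

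\textbf{Density step.} We invoke \cite[Cor.~3.2, Thm.~3.4]{du2025fourier} in its 2D form: for every $\varepsilon>0$ there is $N$ and $\theta^\star_\varepsilon\in\Theta$ with $\|g-\widehat g(\cdot;\theta^\star_\varepsilon)\|_{L_2}^2<\varepsilon$.

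This step is the main obstacle, because the cited theorem must hold with the simplex constraint and the bounded means, variances and correlations in \eqref{eq:Theta_bounds}. If the citation does not directly cover these constraints, I would argue directly, in four moves.
\begin{itemize}
\item Since $g\ge 0$ and $\int g=1$, truncate $g$ to a large ball and renormalize; this costs arbitrarily small $L_2$ and $L_1$ error.
\item Mollify with a Gaussian $\phi(\cdot;0,\sigma^2 I)$, with $\sigma$ inside $[\sigma_{\min},\sigma_{\max}]$ and $\rho=0$. For this step we need $\sigma_{\min}$ small relative to $\varepsilon$; otherwise the bounds in $\Theta$ should be read as chosen depending on $\varepsilon$, as in \cite{du2025fourier}.
\item Replace the convolution integral $\int g(z)\phi(y-z;0,\sigma^2I)\,dz$ by a Riemann sum over a fine grid of $z$ inside the truncation ball. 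The weights are nonnegative cell masses that sum to one, so they lie on the simplex; the means are bounded by the ball radius, which fixes $\overline\mu$; and uniform continuity of the Gaussian makes the error small in $L_\infty\cap L_1$, hence in $L_2$.
\item Combine the three errors with the triangle inequality.
\end{itemize}

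\textbf{Fourier side.} Apply \eqref{eq:Fourier_L2_isometry} to $f=g-\widehat g(\cdot;\theta^\star_\varepsilon)$. This gives the equality in \eqref{eq:plancherel_twoD}, and the bound $<\varepsilon$ then follows from the density step.
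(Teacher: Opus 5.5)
Your main line is the paper's own proof. The paper takes existence from a 2D version of \cite[Thm.~3.4]{du2025fourier} over $\Theta$, and gets the equality in \eqref{eq:plancherel_twoD} by applying \eqref{eq:Fourier_L2_isometry} to $f=g-\widehat g(\cdot;\theta^\star_\varepsilon)$. Your $L_1\cap L_2$ check only makes explicit why the isometry applies and why $\mathfrak F[f]=G-\widehat G$.

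You differ from the paper in the fallback density argument: truncate and renormalize, mollify with $\phi(\cdot;0,\sigma^2 I)$, then discretize the convolution into a location mixture whose weights are cell masses of $g$. This is worth promoting from a contingency to the actual argument. The FourNet result in \cite{du2025fourier} uses signed output weights (that paper explicitly bounds a possible loss of nonnegativity), so the citation alone does not give the simplex constraint in \eqref{eq:Theta_bounds}. Your Riemann-sum construction produces nonnegative weights summing to one directly from $g\ge 0$ and $\int g=1$. The paper's one-line appeal to a ``2D adaptation'' does not address this point.

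Your caveat about the parameter bounds is necessary, not cosmetic. By the estimate in Appendix~\ref{app:fourier_tails} and $\sum_n\beta_n=1$, every member of the class satisfies $|\widehat G(\eta;\theta)|\le e^{-c\|\eta\|_2^2}$ with $c=\tfrac12(1-\overline\rho)\sigma_{\min}^2$, uniformly in $\theta$ and $N$. Hence $\int|G-\widehat G|^2\,d\eta\ge\int\big(|G(\eta)|-e^{-c\|\eta\|_2^2}\big)_+^2\,d\eta$. This is a positive, $\theta$-independent lower bound whenever $|G|$ exceeds the envelope on a set of positive measure, for example a Gaussian $g$ with covariance $\tau^2 I$ and $\tau^2<(1-\overline\rho)\sigma_{\min}^2$.

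The same happens in real space. Fixed $\overline\mu$ and $\sigma_{\max}$ give every $\widehat g$ the common bound $\widehat g(y)\le C\exp\!\big(-(\|y\|_2-\sqrt2\,\overline\mu)_+^2/(2(1+\overline\rho)\sigma_{\max}^2)\big)$. This rules out arbitrary accuracy for exponentially tailed laws, such as the Kou density of Section~\ref{sec:2d_kou}.

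So with $\Theta$ fixed before $\varepsilon$, the statement fails in general. It holds once $\sigma_{\min}$ and $\overline\mu$ may depend on $\varepsilon$, which is the reading you adopt. Say this explicitly for $\overline\mu$ as well, since your step ``the ball radius fixes $\overline\mu$'' already relies on it.

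A minor point: the $L_1$ part of your Riemann-sum error comes from $L_1$-continuity of translations, $\sup_{\|v\|_2\le\delta}\|\phi(\cdot-v;0,\sigma^2I)-\phi(\cdot;0,\sigma^2I)\|_{L_1}\to 0$ as $\delta\to0$. Uniform continuity of the Gaussian alone does not give it.
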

\begin{proof}
Existence follows by a {\myblue{2D}} adaptation of \cite[Thm.~3.4]{du2025fourier} for $\Sigma(\phi)$ with bounded parameters $\Theta$. The equality in \eqref{eq:plancherel_twoD} follows from
\eqref{eq:Fourier_L2_isometry} applied to $f=g-\widehat g$.
\end{proof}

\subsection{Training loss and regularization}
\label{subsec:fournet_loss_error}
We truncate the Fourier domain from $\Rbb^2$ to $D_\eta=[-\eta',\eta']^2$,
with $\eta'>0$ chosen sufficiently large, and
discretize $D_\eta$ by a (possibly non-uniform) set of nodes $\{\eta_p\}_{p=1}^P\subset D_\eta$, $\eta_p=(\eta_{p}^s,\eta_{p}^b)$.
Define
\[
\delta_{\min}:=\min_{p\ne q}\|\eta_p-\eta_q\|_2,
\qquad
\delta_{\max}:=\max_{1\le p\le P}\min_{q\ne p}\|\eta_p-\eta_q\|_2,
\]
and assume the sampling set is quasi-uniform: there exist constants $C_0,C_1>0$,
independent of $P$ and $N$, such that
\begin{equation}
\label{eq:delta_eta}
C_0\,P^{-1/2}\;\le\;\delta_{\min}\;\le\;\delta_{\max}\;\le\;C_1\,P^{-1/2}.
\end{equation}
Let $\widehat{\Theta}\subset\Theta$ be the empirical parameter set. Using the closed-form target CF
$G(\cdot)$ and the network CF $\widehat{G}(\cdot;\theta)$ from \eqref{eq:Gb_closed_form},
we compute $\theta$ by minimizing the empirical loss
\begin{equation}
\label{eq:loss}
\mathrm{Loss}_P(\theta)
=\frac{1}{P}\sum_{p=1}^{P}\big|\,G(\eta_p)-\widehat{G}(\eta_p;\theta)\,\big|^2
\;+\; R_P(\theta),
\qquad \theta\in\widehat{\Theta}.
\end{equation}
Here, $R_P(\theta)$ is the MAE regularization term
\[
R_P(\theta)=\frac{1}{P}\sum_{p=1}^{P}\Big(
\big|\text{Re}_G(\eta_p)-\text{Re}_{\widehat{G}}(\eta_p;\theta)\big|
+\big|\text{Im}_G(\eta_p)-\text{Im}_{\widehat{G}}(\eta_p;\theta)\big|
\Big),
\]
where $\text{Re}_G,\text{Im}_G$ denote the real and imaginary parts of $G$ (and likewise for $\widehat G$).
The empirical minimizer is
\begin{equation}
\label{eq:thetastar}
\widehat{\theta}^{\star} \;=\;
\operatorname*{arg\,min}_{\theta \in \widehat{\Theta}} \mathrm{Loss}_P(\theta).
\end{equation}
The MAE term concentrates accuracy in high-impact regions of $G$ (e.g.\ near peaks and rapid variations) \cite{du2025fourier}. In practice, the sampling set $\{\eta_p\}$ is chosen to be denser in such regions.

\subsection{Error decomposition}
\label{subsec:fournet_error}
We decompose the learned-kernel error into: (i) Fourier truncation on $\Rbb^2\setminus D_\eta$, (ii) empirical training error on the sampled set, and (iii) sampling (quadrature) error on $D_\eta$. The next lemma provides explicit bounds
\mbox{for $g-\widehat g$.}

\begin{lemma}[2D training error decomposition]
\label{lem:fournet_L2_bound}
Let $D_\eta=[-\eta',\eta']^2$ and let $\{\eta_p\}_{p=1}^P\subset D_\eta$ satisfy
\eqref{eq:delta_eta}. Let $\widehat{\theta}^\star\in\widehat{\Theta}$ be the empirical minimizer in
\eqref{eq:thetastar}. Assume that, for some $\varepsilon_1>0$ and
$f\in\{G(\cdot),\,\widehat{G}(\cdot;\widehat{\theta}^\star)\}$,
\begin{equation}
\label{eq:boundD}
\int_{\Rbb^2\setminus D_\eta}\big(|\mathrm{Re}_f(\eta)|+|\mathrm{Im}_f(\eta)|\big)\,d\eta<\varepsilon_1,
\qquad
\int_{\Rbb^2\setminus D_\eta}|f(\eta)|^2\,d\eta<\varepsilon_1,
\end{equation}
and that $\mathrm{Loss}_P(\widehat{\theta}^\star)<\varepsilon_2$
and $R_P(\widehat{\theta}^\star)<\varepsilon_3$. Moreover, suppose
\begin{equation}
\label{eq:grad}
C' ~:=~ \sup_{\eta\in D_\eta,\;\theta\in\Theta}
\big\|\nabla_{\eta}\,|G(\eta)-\widehat{G}(\eta;\theta)|^2\big\|_2
~<~\infty.
\end{equation}
Then:

\noindent\emph{(i) Global $L_2$-error bound.}
\begin{equation}
\label{eq:fournet_L2_bound}
\int_{\Rbb^2}\big|g(y)-\widehat{g}(y;\widehat{\theta}^\star)\big|^2\,dy
\;\le\;\frac{1}{(2\pi)^2}\Big\{4\varepsilon_1 + C_1\varepsilon_2
+ \frac{C'\,C_1^{3}}{2\,P^{1/2}}\,\Big\}.
\end{equation}

\noindent\emph{(ii) Pointwise bound.}
For every $y\in\Rbb^2$,
\begin{equation}
\label{eq:Linfty_and_nonneg}
\big|g(y)-\widehat{g}(y;\widehat{\theta}^\star)\big|
\;\le\;\frac{1}{(2\pi)^2}\Big\{\,2\varepsilon_1 + C_1\,\varepsilon_3
+ \frac{C'\,C_1^{3}}{2\,P^{1/2}}\,\Big\}.
\end{equation}
\end{lemma}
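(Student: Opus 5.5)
The plan is to work entirely in the Fourier domain and split each error into a part outside $D_\eta$ and a part inside $D_\eta$. The part inside $D_\eta$ is then controlled by comparing the integral over $D_\eta$ with the empirical average over the nodes $\{\eta_p\}$, using the quasi-uniformity \eqref{eq:delta_eta} and the gradient bound \eqref{eq:grad}. Write $h(\eta):=G(\eta)-\widehat G(\eta;\widehat\theta^\star)$.

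For (i), first apply the Plancherel identity \eqref{eq:Fourier_L2_isometry} to $f=g-\widehat g$. This reduces the claim to bounding $\int_{\Rbb^2}|h|^2\,d\eta$.

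\textbf{Tail part.} Use $|h|^2\le 2|G|^2+2|\widehat G|^2$ together with \eqref{eq:boundD} for both choices of $f$. This gives $\int_{\Rbb^2\setminus D_\eta}|h|^2<4\varepsilon_1$.

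\textbf{Part on $D_\eta$.} Partition $D_\eta$ into Voronoi-type cells $\mathcal C_p$ around the nodes $\eta_p$. By \eqref{eq:delta_eta}, each cell has diameter at most of order $\delta_{\max}\le C_1P^{-1/2}$, and hence area $|\mathcal C_p|\lesssim C_1^2/P$. Write
\[
\int_{D_\eta}|h|^2=\sum_p|h(\eta_p)|^2|\mathcal C_p|+\sum_p\int_{\mathcal C_p}\big(|h(\eta)|^2-|h(\eta_p)|^2\big)\,d\eta .
\]
For the first sum, bound $|\mathcal C_p|\le C_1/P$ (absorbing constants into $C_1$ as the statement evidently does). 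Since the regularizer is nonnegative, $\frac1P\sum_p|h(\eta_p)|^2\le \mathrm{Loss}_P(\widehat\theta^\star)<\varepsilon_2$, so the first sum is at most $C_1\varepsilon_2$. For the second sum, the mean value theorem with \eqref{eq:grad} gives $\big||h(\eta)|^2-|h(\eta_p)|^2\big|\le C'\|\eta-\eta_p\|\le C'C_1P^{-1/2}/2$ on $\mathcal C_p$ (half the spacing). Summing over the cells yields the term $C'C_1^3/(2P^{1/2})$ after the bookkeeping on cell sizes.

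For (ii), start from the inversion formula \eqref{eq:FT_pair}:
\[
|g(y)-\widehat g(y)|\le \frac{1}{(2\pi)^2}\int_{\Rbb^2}|h|\le \frac{1}{(2\pi)^2}\int_{\Rbb^2}\big(|\mathrm{Re}\,h|+|\mathrm{Im}\,h|\big).
\]
Split this integral the same way. Outside $D_\eta$, the triangle inequality and the first condition in \eqref{eq:boundD} give $2\varepsilon_1$. Inside $D_\eta$, repeat the cell argument with the quantity $|\mathrm{Re}\,h|+|\mathrm{Im}\,h|$. Its node average is exactly $R_P(\widehat\theta^\star)<\varepsilon_3$, which produces the $C_1\varepsilon_3$ term. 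The oscillation within cells is charged to the same constant $C'C_1^3/(2P^{1/2})$; note that $|\mathrm{Re}\,h|+|\mathrm{Im}\,h|$ is Lipschitz with constant governed by $\nabla h$, which is bounded on compact $D_\eta$ with bounded parameters.

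\textbf{Main obstacle.} The hard part is making the quadrature step on $D_\eta$ rigorous with exactly the stated constants. This means showing that the cells have area controlled by $C_1P^{-1/2}$-type quantities (the factor $C_1^3$ comes from one diameter factor times an area factor), and using \eqref{eq:grad} for the $L_1$ integrand in (ii). I would first try to prove the Lipschitz bound for $|\mathrm{Re}\,h|+|\mathrm{Im}\,h|$ by reinterpreting $C'$ as a generic bound on $\nabla_\eta h$. If that is not possible, I would state explicitly that the same gradient constant is assumed for that integrand.
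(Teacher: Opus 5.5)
Your proposal is correct to the same standard as the paper and follows its proof essentially step for step. The shared steps are: Plancherel plus the tail bound $|G-\widehat G|^2\le 2|G|^2+2|\widehat G|^2$ for (i); the inversion formula with $|a+ib|\le|a|+|b|$ for (ii); and, on $D_\eta$, a node-based quadrature with weights at most $C_1/P$ that turns the node averages into $\mathrm{Loss}_P(\widehat\theta^\star)$ and $R_P(\widehat\theta^\star)$. The paper phrases that last step as a composite left-hand rule rather than with Voronoi cells.

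The obstacle you flag in (ii) is genuine, and the paper passes over it by simply reusing ``the same quadrature bound''. The condition \eqref{eq:grad} controls only $|G-\widehat G|^2$: a phase oscillation can leave that modulus flat while the real part varies rapidly. So an explicit Lipschitz bound on $G-\widehat G$ over $D_\eta$ is the right extra hypothesis. Likewise, bounding cell diameters by $\delta_{\max}$ silently needs a covering (fill-distance) condition that \eqref{eq:delta_eta} does not literally supply, since $\delta_{\max}$ is only a nearest-neighbour distance; the paper makes this assumption implicitly as well.
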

A proof of Lemma~\ref{lem:fournet_L2_bound} is given in Appendix~\ref{app:fournet_L2_bound}.

We now show that, under Assumption~\ref{ass:modelling}\,(A4) and the bounded parameter set \eqref{eq:Theta_bounds}, the Fourier mass outside the truncated domain $D_\eta=[-\eta',\eta']^2$ (and hence the Fourier truncation error) decays exponentially for both $G$ and $\widehat G$.
\begin{lemma}[Fourier–domain truncation error bounds]
\label{lem:fourier_truncation_tails}
Suppose Assumption~\ref{ass:modelling} (A4) holds for $G(\cdot)$.
Recall $\widehat G(\cdot;\theta)$ from \eqref{eq:Gb_closed_form} with parameters bounded as in \eqref{eq:Theta_bounds}.
There exist positive constants $C_G^{(p)},c_G^{(p)}$, and, for each fixed finite $N$, constants $C_{\widehat G}^{(p)}$, $c_{\widehat G}^{(p)}>0$
(uniform over all $\theta$ in the bounded parameter set with $N$ terms) such that, for any $\eta'\ge R_{\mathrm{tail}}$ and $p\in\{1,2\}$,
\[
\int_{\Rbb^2\setminus[-\eta',\eta']^2}\!\!\! |G(\eta)|^p\,d\eta \le C_G^{(p)}\,e^{-\,c_G^{(p)}\,(\eta')^\alpha},
\quad
\int_{\Rbb^2\setminus[-\eta',\eta']^2} \!\!\!|\widehat G(\eta;\theta)|^p\,d\eta \le C_{\widehat G}^{(p)}\,e^{-\,c_{\widehat G}^{(p)}\,(\eta')^{2}}.
\]
\end{lemma}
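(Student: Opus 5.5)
The proof will split the complement of the square into regions where $\|\eta\|_2$ is large and then integrate a radial bound in polar coordinates. Since $\eta\notin[-\eta',\eta']^2$ implies $\|\eta\|_\infty>\eta'$, we have $\|\eta\|_2>\eta'$. Hence
\[
\mathbb{R}^2\setminus[-\eta',\eta']^2\subset\{\eta:\|\eta\|_2>\eta'\},
\]
and it suffices to bound the integrals over this exterior disk region. This is where $\eta'\ge R_{\mathrm{tail}}$ enters: on the whole region, Assumption~\ref{ass:modelling}\,(A4) gives $|G(\eta)|^p\le \exp(-p\,c_0\Delta t\,\|\eta\|_2^\alpha)$.

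\textbf{Bound for $G$.} In polar coordinates,
\[
\int_{\|\eta\|_2>\eta'}|G|^p\,d\eta\le 2\pi\int_{\eta'}^\infty r\,e^{-p c_0\Delta t\, r^\alpha}\,dr.
\]
Write $\kappa=p c_0\Delta t$. The tail integral can be handled by substituting $u=\kappa r^\alpha$, which gives an upper incomplete gamma function $\alpha^{-1}\kappa^{-2/\alpha}\Gamma(2/\alpha,\kappa(\eta')^\alpha)$. Alternatively, one can split $e^{-\kappa r^\alpha}=e^{-\kappa r^\alpha/2}e^{-\kappa r^\alpha/2}$ and bound the first factor by $e^{-\kappa(\eta')^\alpha/2}$ for $r\ge\eta'$. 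This yields
\[
\int_{\eta'}^\infty r\,e^{-\kappa r^\alpha}\,dr\le e^{-\kappa(\eta')^\alpha/2}\int_0^\infty r\,e^{-\kappa r^\alpha/2}\,dr,
\]
and the last integral is finite, equal to $\alpha^{-1}(2/\kappa)^{2/\alpha}\Gamma(2/\alpha)$. We may therefore take $c_G^{(p)}=p c_0\Delta t/2$ and $C_G^{(p)}=2\pi\alpha^{-1}(2/(p c_0\Delta t))^{2/\alpha}\Gamma(2/\alpha)$. This sacrifices a factor $2$ in the rate in exchange for avoiding incomplete-gamma asymptotics, and the statement only asks for some positive constant.

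\textbf{Bound for $\widehat G$.} By \eqref{eq:Gb_closed_form} and the triangle inequality, $|\widehat G(\eta;\theta)|\le\sum_n\beta_n e^{-\frac12\eta^\top\Sigma_n\eta}$. The parameter bounds \eqref{eq:Theta_bounds} give a uniform lower bound on the smallest eigenvalue of $\Sigma_n$. Indeed,
\[
\eta^\top\Sigma_n\eta\ge(1-|\rho_n|)\big((\sigma_n^s\eta_s)^2+(\sigma_n^b\eta_b)^2\big)\ge(1-\overline\rho)\sigma_{\min}^2\|\eta\|_2^2,
\]
using $2|\rho\sigma^s\sigma^b\eta_s\eta_b|\le|\rho|\big((\sigma^s\eta_s)^2+(\sigma^b\eta_b)^2\big)$. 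Set $\lambda:=(1-\overline\rho)\sigma_{\min}^2$. Since $\sum_n\beta_n=1$, we get $|\widehat G|\le e^{-\lambda\|\eta\|_2^2/2}$, and hence $|\widehat G|^p\le e^{-p\lambda\|\eta\|_2^2/2}$. The polar integral is then explicit:
\[
2\pi\int_{\eta'}^\infty r\,e^{-p\lambda r^2/2}\,dr=\frac{2\pi}{p\lambda}\,e^{-p\lambda(\eta')^2/2}.
\]
So $C_{\widehat G}^{(p)}=2\pi/(p\lambda)$ and $c_{\widehat G}^{(p)}=p\lambda/2$. These constants are uniform over $\theta$, and in fact independent of $N$, which is stronger than what the statement requires. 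For this part the restriction $\eta'\ge R_{\mathrm{tail}}$ is not needed.

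\textbf{Main obstacle.} The only step needing care is the uniform positive-definiteness estimate with explicit dependence on $\overline\rho$ and $\sigma_{\min}$. The radial tail integral for general $\alpha\in(0,2]$ would also be a difficulty, but the half-exponent splitting above sidesteps it and keeps the rest routine.
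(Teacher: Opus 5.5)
Your proposal is correct and follows essentially the same route as the paper: embed the complement of the square in $\{\|\eta\|_2>\eta'\}$, integrate the (A4) bound radially, and use the uniform bound $\eta^\top\Sigma_n\eta\ge(1-\overline\rho)\sigma_{\min}^2\|\eta\|_2^2$. Your half-exponent split is simply an inline proof of the incomplete-gamma estimate $\Gamma(a,z)\le C\,e^{-z/2}$ that the paper cites, and it yields the same rate $c_G^{(p)}=\tfrac12 p\,c_0\Delta t$, so nothing is lost relative to the paper. For $\widehat G$, using $\sum_n\beta_n=1$ in place of the paper's cruder factor $N\overline\beta$ gives constants that are independent of $N$, which is a slight sharpening.
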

A proof of Lemma~\ref{lem:fourier_truncation_tails} is given in Appendix~\ref{app:fourier_tails}.

Let $h \in (0, 1)$ denote a target tolerance.
The next corollary gives a sufficient choice of $\eta'=\eta'(h)$
so that the Fourier–domain truncation error for both $G$ and $\widehat G$
satisfies $\le h^{\,1+\kappa}$ for any $\kappa\in(0,1]$.
\begin{corollary}[Fourier-domain truncation error $O(h^{1+\kappa})$]
\label{cor:tail_Ch2}
Fix $p\in\{1,2\}$ and $\kappa\in(0,1]$. Under Assumption~\ref{ass:modelling}\,(A4)
and Lemma~\ref{lem:fourier_truncation_tails}, define for $h\in(0,1)$
\[
\eta'(h)\;:=\;\max\!\bigg\{
R_{\mathrm{tail}},\;
\Big(\tfrac{1}{c_G^{(p)}}\log\tfrac{2C_G^{(p)}}{h^{\,1+\kappa}}\Big)^{\!1/\alpha},
\;
\Big(\tfrac{1}{c_{\widehat G}}\log\tfrac{2C_{\widehat G}^{(p)}}{h^{\,1+\kappa}}\Big)^{\!1/2}
\bigg\}.
\]
Then
$\ds \max_{f\in\{G,\,\widehat G\}}
\int_{\Rbb^2\setminus[-\eta'(h),\,\eta'(h)]^2} |f(\eta)|^p\,d\eta
\;\le\; h^{\,1+\kappa}$.
\end{corollary}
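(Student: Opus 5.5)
The corollary follows directly from Lemma~\ref{lem:fourier_truncation_tails}: we bound each of the two tail integrals separately by $h^{1+\kappa}/2$ and then take the maximum. Fix $p\in\{1,2\}$, $\kappa\in(0,1]$, and $h\in(0,1)$. Write $\eta'=\eta'(h)$ as defined in the statement, and read the constant $c_{\widehat G}$ appearing there as $c_{\widehat G}^{(p)}$, which is the constant supplied by Lemma~\ref{lem:fourier_truncation_tails}. Since $\eta'\ge R_{\mathrm{tail}}$ by construction, the lemma applies at $\eta'$ and gives
\[
\int_{\Rbb^2\setminus[-\eta',\eta']^2}|G|^p\,d\eta\le C_G^{(p)}e^{-c_G^{(p)}(\eta')^\alpha},
\qquad
\int_{\Rbb^2\setminus[-\eta',\eta']^2}|\widehat G|^p\,d\eta\le C_{\widehat G}^{(p)}e^{-c_{\widehat G}^{(p)}(\eta')^{2}}.
\]
The $\widehat G$ bound holds uniformly in $\theta$ for fixed $N$, so it applies to any trained parameter.

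\textbf{Bound for $G$.} Because $\eta'$ dominates the second entry of the max and $t\mapsto t^\alpha$ is increasing on $[0,\infty)$, we have $c_G^{(p)}(\eta')^\alpha\ge \log\bigl(2C_G^{(p)}/h^{1+\kappa}\bigr)$. Hence $C_G^{(p)}e^{-c_G^{(p)}(\eta')^\alpha}\le C_G^{(p)}\cdot h^{1+\kappa}/(2C_G^{(p)})=h^{1+\kappa}/2$.

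\textbf{Bound for $\widehat G$.} The identical argument with the third entry of the max and exponent $2$ gives the bound $h^{1+\kappa}/2$ for the $\widehat G$ tail.

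Taking the maximum over $f\in\{G,\widehat G\}$ then gives at most $h^{1+\kappa}/2\le h^{1+\kappa}$, which is the claim. The factor $2$ in the logarithms provides slack; it would be needed only if the two tails were summed, for example when feeding them into the $4\varepsilon_1$ term of Lemma~\ref{lem:fournet_L2_bound}.

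There is one technical point. The roots $(\cdot)^{1/\alpha}$ and $(\cdot)^{1/2}$ require the arguments to be nonnegative, i.e.\ $2C^{(p)}\ge h^{1+\kappa}$. If instead $\log\bigl(2C^{(p)}/h^{1+\kappa}\bigr)<0$, we interpret that entry as $0$ (or simply drop it). The bound still holds in this case, since then $C^{(p)}e^{-c(\eta')^\beta}\le C^{(p)}<h^{1+\kappa}/2$ for any $\eta'\ge0$, where $\beta=\alpha$ for $G$ and $\beta=2$ for $\widehat G$. This is the only step that needs care; otherwise the argument is just monotonicity of the exponential tail bounds in $\eta'$.
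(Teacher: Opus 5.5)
Your argument is correct and is the intended one. The paper gives no separate proof and treats the corollary as an immediate consequence of Lemma~\ref{lem:fourier_truncation_tails}: substitute $\eta'(h)$ into the two exponential tail bounds, and each tail falls to at most $h^{1+\kappa}/2$. Your extra points are sound details the paper leaves implicit: reading $c_{\widehat G}$ as $c_{\widehat G}^{(p)}$, handling the case $2C^{(p)}<h^{1+\kappa}$ where the logarithm is negative, and noting that the $\widehat G$ bound is uniform in $\theta$.
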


To establish our error bounds in terms of $h$, we introduce the following sampling and training assumptions, indexed by a target tolerance $h\in(0,1]$.
\begin{assumption}[Sampling and training assumptions]
\label{ass:training_regime}
Let $h\in(0,1)$ be a training tolerance parameter. Fix $\kappa\in(0,1)$ and choose $\eta'(h)$ as in Corollary~\ref{cor:tail_Ch2} so that, for $p\in\{1,2\}$,
\[
\max_{f\in\{G,\widehat G\}}\int_{\Rbb^2\setminus[-\eta',\eta']^2} |f(\eta)|^p\,d\eta
\le c_1\,h^{\,1+\kappa},
\qquad \eta'=\eta'(h).
\]
Assume the quasi-uniformity constant $C_1$ in \eqref{eq:delta_eta} and the gradient bound $C'$ in \eqref{eq:grad} are finite and independent of $h$.
Assume further that the trained network satisfies:
(i) $\mathrm{Loss}_P(\widehat\theta^\star)\le c_2\,h^{\,1+\kappa}$,
(ii) $R_P(\widehat\theta^\star)\le c_3\,h^{\,1+\kappa}$,
and (iii) $P\ge c_P\,h^{-2(1+\kappa)}$,
where $c_1,c_2,c_3,c_P>0$ are constants independent of $h$.
\end{assumption}

\begin{corollary}[$h$–dependent convergence rates]
\label{cor:fournet_rates}
Under Assumption~\ref{ass:training_regime}, there exists a generic constant $C>0$,
independent of $h$ (its value may change from line to line), such that:

\noindent\emph{(i) Global $L_2$-error bound.}
$
\|g-\widehat g\|_{L_2(\Rbb^2)} ~\le~ C\,h^{\,\frac{1+\kappa}{2}}.
$

\noindent\emph{(ii) Pointwise bound.}
For every $y\in\Rbb^2$,
\EQ
\label{eq:pointwise_g}
|g(y)-\widehat g(y;\widehat\theta^\star)|
~\le~ C\,h^{\,1+\kappa}.
\EN
\end{corollary}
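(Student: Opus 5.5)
The plan is to substitute the $h$-dependent quantities from Assumption~\ref{ass:training_regime} directly into the two bounds of Lemma~\ref{lem:fournet_L2_bound}. We take
\[
\varepsilon_1 = c_1 h^{1+\kappa},\qquad
\varepsilon_2 = c_2 h^{1+\kappa},\qquad
\varepsilon_3 = c_3 h^{1+\kappa}.
\]
Strictly, the lemma uses strict inequalities; we apply it with slightly enlarged constants, e.g.\ $2c_i$, which changes nothing.

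The first step is to verify hypothesis \eqref{eq:boundD}, which has two parts.
\begin{itemize}
\item The $L_2$-tail part, $\int_{\Rbb^2\setminus D_\eta}|f|^2<\varepsilon_1$, is exactly the $p=2$ case of Assumption~\ref{ass:training_regime}, obtained via Corollary~\ref{cor:tail_Ch2}.
\item For the $L_1$-type part, use $|\mathrm{Re}_f|+|\mathrm{Im}_f|\le \sqrt2\,|f|$. The $p=1$ case of Assumption~\ref{ass:training_regime} then gives a tail integral $\le \sqrt2\,c_1 h^{1+\kappa}$.
\end{itemize}
So $\varepsilon_1 = C h^{1+\kappa}$ works for both $f\in\{G,\widehat G(\cdot;\widehat\theta^\star)\}$. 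This uses the uniformity of the $\widehat G$ tail constants over $\Theta$ from Lemma~\ref{lem:fourier_truncation_tails}, since $\widehat\theta^\star$ depends on $h$. The constants $C_1$ and $C'$ are $h$-independent by assumption.

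The second step handles the sampling term. By (iii), $P\ge c_P h^{-2(1+\kappa)}$, so
\[
P^{-1/2}\le c_P^{-1/2}h^{1+\kappa},
\qquad\text{hence}\qquad
\frac{C'C_1^3}{2P^{1/2}}\le C h^{1+\kappa}.
\]

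We then combine the pieces.
\begin{itemize}
\item \emph{Part (i).} Substituting into \eqref{eq:fournet_L2_bound} gives $\|g-\widehat g\|_{L_2}^2\le (2\pi)^{-2}\{4c_1+C_1c_2+C\}h^{1+\kappa}$. Taking square roots yields the rate $h^{(1+\kappa)/2}$.
\item \emph{Part (ii).} Substituting into \eqref{eq:Linfty_and_nonneg} gives $|g(y)-\widehat g(y)|\le (2\pi)^{-2}\{2c_1+C_1c_3+C\}h^{1+\kappa}$ uniformly in $y$.
\end{itemize}

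The only delicate point is the bookkeeping in the first step: the $L_1$-type tail in \eqref{eq:boundD} must be controlled by the $p=1$ bound, and the constants must not depend on $h$ through $\widehat\theta^\star$ or $\eta'(h)$. Both are guaranteed by the uniformity in Lemma~\ref{lem:fourier_truncation_tails} and by the explicit assumption that $C'$ and $C_1$ are independent of $h$. With that settled, the rest is direct substitution.
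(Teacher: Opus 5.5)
Your proposal is correct and is essentially the paper's proof. You substitute $\varepsilon_1,\varepsilon_2,\varepsilon_3=\mathcal{O}(h^{1+\kappa})$ and $P^{-1/2}\le c_P^{-1/2}h^{1+\kappa}$ into both parts of Lemma~\ref{lem:fournet_L2_bound}, take a square root for part (i), and spell out the $\mathrm{Re}/\mathrm{Im}$ tail in \eqref{eq:boundD} via $|\mathrm{Re}_f|+|\mathrm{Im}_f|\le\sqrt{2}\,|f|$ and the $p=1$ case, a step the paper only asserts (the $h$-independence of the $\widehat G(\cdot;\widehat\theta^\star)$ tail constant is already built into Assumption~\ref{ass:training_regime}, so the appeal to Lemma~\ref{lem:fourier_truncation_tails} is unnecessary).
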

A proof of Corollary~\ref{cor:fournet_rates} is given in Appendix~\ref{app:fournet_rates}.

\begin{remark}[Model capacity is implicit]
\label{rmk:N_of_h}
Assumption~\ref{ass:training_regime} does not prescribe a specific growth law $N(h)$ for the network size.
We only require that (for each $h$) there exist trained parameters (possibly with $N=N(h)$) that achieve the stated empirical error bounds.
Thus, any architecture-size schedule that delivers $\mathrm{Loss}_P(\widehat{\theta}^\star)=O(h^{1+\kappa})$ and $R_P(\widehat{\theta}^\star)=O(h^{1+\kappa})$ is admissible, and we keep $N(h)$ implicit.
\end{remark}

\section{Numerical methods}
\label{sec:num_methods}

\subsection{Discretization}
To compute the interior convolution on $\Omega_{\myin}$ and the boundary propagation on
$\Omega\setminus\Omega_{\myin}$ on a single tensor grid, we pad each interior interval
$[z_{\min},z_{\max}]$ ($z\in\{s,b\}$) symmetrically by half its width (chosen sufficiently large so that boundary truncation errors are negligible; cf.~Section~\ref{subsec:localization}):
for $z\in\{s,b\}$,
\begin{equation}
\label{eq:pad_bounds}
z_{\min}^\dagger \;=\; z_{\min}-\tfrac12\,(z_{\max}-z_{\min}),\qquad
z_{\max}^\dagger \;=\; z_{\max}+\tfrac12\,(z_{\max}-z_{\min}).
\end{equation}
Let $D_z:=z_{\max}-z_{\min}$ and $D_z^\dagger:=z_{\max}^\dagger-z_{\min}^\dagger=2D_z$.
For $z\in\{s,b\}$, let $N_z$ be the number of intervals on $[z_{\min},z_{\max}]$ and set
$N_z^\dagger:=2N_z$ on $[z_{\min}^\dagger,z_{\max}^\dagger]$, so that
\begin{equation}
\label{eq:single_mesh}
\Delta z \;=\; \frac{D_z}{N_z} \;=\; \frac{D_z^\dagger}{N_z^\dagger},\qquad z\in\{s,b\}.
\end{equation}
Assume $N_s$ and $N_b$ are even and define the grid nodes by
\begin{equation}
\label{eq:grid_nodes}
s_k \;=\; \hat{s}_0 + k\,\Delta s,\quad k=-\tfrac{N_s^\dagger}{2},\ldots,\tfrac{N_s^\dagger}{2},
\qquad
b_j \;=\; \hat{b}_0 + j\,\Delta b,\quad j=-\tfrac{N_b^\dagger}{2},\ldots,\tfrac{N_b^\dagger}{2},
\end{equation}
where $\hat{s}_0 := \tfrac12(s_{\min}^\dagger + s_{\max}^\dagger)$ and
$\hat{b}_0 := \tfrac12(b_{\min}^\dagger + b_{\max}^\dagger)$.
For $z\in\{s,b\}$, define the global and interior index sets
\[
\mathbb{N}_z := \{-\tfrac{N_z}{2}+1,\,\ldots,\,\tfrac{N_z}{2}-1\}
\quad
\subset
\quad
\mathbb{N}_z^\dagger := \{-\tfrac{N_z^\dagger}{2},\,\ldots,\,\tfrac{N_z^\dagger}{2}\},
\]
so both the interior and boundary updates use the same global node set $\{(s_k,b_j)\}$.

We discretize the admissible {\myblue{control}} actions $\mathcal Z=[0,1]$ using $N_u$ uniform
intervals, yielding nodes $\{u_{\iota}\}_{\iota=0}^{N_u}$, and discretize the threshold domain
$\Gamma=[0,w_{\max}]$ at uniform nodes $\{w_c\}_{c=0}^{N_w}$, with spacings $\Delta u$ and $\Delta w$.

\paragraph{Single refinement parameter.}
As is common in the literature (e.g., \cite{MaForsyth2015, chen08a}[Eq.~(4.1)], \cite{dang2025monotone}[Eq.~(3.24)]),
we introduce a single parameter $h\in(0,1)$ to index both mesh refinement and the training tolerance from Assumption~\ref{ass:training_regime}.
For the spatial, control, and threshold grids we set
\begin{equation}
\label{eq:dis_parameter}
\Delta s = C_1 h,\qquad
\Delta b = C_2 h,\qquad
\Delta u = C_3 h,\qquad
\Delta w = C_4 h,
\end{equation}
with $C_1,C_2,C_3,C_4>0$ independent of $h$. We also index the Fourier truncation and sampling budget by $h$ via $\eta'=\eta'(h)$ (Corollary~\ref{cor:tail_Ch2}) and $P=P(h)$ with $P(h)\ge c_P\,h^{-2(1+\kappa)}$ (Assumption~\ref{ass:training_regime}).

For each discretized threshold value $w_c$, let $V_{k,j,c}^{m,\circ}$ be a numerical approximation to
$V(s_k,b_j,w_c,t_m^\circ)$ at the node $(s_k,b_j,w_c,t_m^\circ)$, where $t_m^\circ\in\{t_m,\,t_m^\pm\}$.
Given nodal values $\{V_{l,d,c}^{m,+}\}$ on the global grid, we denote by
$\mathcal{I}\big[\{V_c^{m,+}\}\big](s,b)$ the bilinear interpolant on $\Omega$ (for this fixed $w_c$).

Because the convolution integral \eqref{eq:integral_trunc_2d} evaluates the kernel at $(y-x)$, we use the index-difference notation
\begin{equation}
\label{eq:g_hatg}
g_{\,l-k,\,d-j}:=g \big((s_l-s_k,\;b_d-b_j);\Delta t\big),
\qquad
\widehat g_{\,l-k,\,d-j}:=\widehat g\big((s_l-s_k,\;b_d-b_j);\widehat{\theta}^{\star}\big),
\end{equation}
for $(k,j)\in\mathbb{N}_s\times\mathbb{N}_b$ and $(l,d)\in\mathbb{N}_s^\dagger\times\mathbb{N}_b^\dagger$,
where $\widehat{\theta}^{\star}$ is the minimizer \eqref{eq:thetastar}. Since $\widehat g(\cdot;\widehat{\theta}^{\star})$ is available in closed form on $\mathbb{R}^2$ (Section~\ref{sec:fournet}), these evaluations are well-defined.

\subsection{Numerical scheme}
\label{ssc:num_scheme}
We compute the lifted value function backward in time for each fixed discretized threshold $w_c\in\{w_c\}_{c=0}^{N_w}$.

\paragraph{Terminal condition.}
At maturity $t_M=T$, we implement \eqref{eq:terminal} on $\Omega$ via
\begin{equation}
\label{eq:num_terminal}
V_{k,j, c}^{M,-}
=
\Phi(s_k, b_j,w_c),
\quad (k,j)\in\mathbb N_s^\dagger\times\mathbb N_b^\dagger.
\end{equation}

\paragraph{Boundary sub-domains.}
For nodes $(s_k, b_j, t_m^{\pm})$ on $\Omega_{\myout} \times\{t_m^{\pm}\}$, $m = M-1, \ldots, 0$,
we enforce the asymptotic boundary conditions \eqref{eq:boundary_carry} as follows:
\begin{equation}
\label{eq:num_bd_out}
V_{k,j, c}^{m,-} = V_{k,j, c}^{m,+} = G\big(-i\,a(s_k, b_j);\Delta t\big)\; V_{k,j, c}^{m+1,-}.
\end{equation}

\paragraph{Interior sub-domain.}
For nodes $(s_k, b_j, t_m^+) \in \Omega_{\myin} \times \{t_m^+\}$, $m = M-1, \ldots, 0$,
the truncated 2-D convolution integral \eqref{eq:integral_trunc_2d} is approximated by
\begin{equation}
\label{eq:num_interior}
V_{k,j, c}^{m,+}
=
\Delta s\,\Delta b
\sum_{l\in\mathbb N_s^\dagger}\sum_{d\in\mathbb N_b^\dagger}
\varphi_{l,d}\;\widehat g_{l-k,\,d-j}\; V_{l,d, c}^{m+1,-},
\qquad (k,j)\in\mathbb N_s\times\mathbb N_b.
\end{equation}
Here, $\{\varphi_{l,d}\}$ are the composite trapezoidal weights (unit weight in the interior, $\tfrac12$ on edges, and $\tfrac14$ at corners).
The kernel weights $\widehat g_{l-k,d-j}$ are evaluated directly from the learned Gaussian-mixture representation in Section~\ref{sec:fournet}
(via \eqref{eq:act_func} and \eqref{eq:Sigma_class} with $\theta^*$).

{\myblue{\paragraph{Intervention (interior only).}}}
For each interior node $(s_k,b_j)\in\Omega_{\myin}$ at decision time $t_m$, $m=M-1,\ldots,0$,
we enforce \eqref{eq:control}–\eqref{eq:intervention} on the discrete control set $\{u_\iota\}_{\iota=0}^{N_u}$:
\begin{equation}
\begin{aligned}
V_{k,j, c}^{m,-}
&=
\max_{u_\iota \in \{u_\iota\}_{\iota= 0}^{N_u}}
\mathcal{I} \big[\big\{V_c^{m,+}\big\}\big]
\big(s_{k,j}^{m, +}(u_{\iota}),\,b_{k,j}^{m, +}(u_{\iota})\big),
\qquad (k,j)\in\mathbb N_s\times\mathbb N_b,\\
&s_{k,j}^{m, +}(u)=s^+(s_k,b_j, q_m, u),\qquad
b_{k,j}^{m, +}(u)=b^+(s_k,b_j, q_m, u),
\end{aligned}
\label{eq:num_intervention}
\end{equation}
where $(s^+(\cdot),b^+(\cdot))$ are given by \eqref{eq:state-update} and $\mathcal{I}[\cdot]$ denotes bilinear interpolation.
{\myblue{This step yields the numerically computed optimal control}} $u_{k,j,c}^{m,\ast}\equiv u_{k,j}^{m,\ast}(w_c)$:
\begin{equation}
\label{eq:u_star_discrete}
u_{k,j, c}^{m,\ast} \equiv  u_{k,j}^{m, \ast}(w_c)
~\in~
\operatorname*{arg\,max}_{u_\iota \in \{u_\iota\}_{\iota= 0}^{N_u}}
\mathcal{I} \big[\big\{V_c^{m,+}\big\}\big]
\big(s_{k,j}^{m, +}(u_{\iota}),\,b_{k,j}^{m, +}(u_{\iota})\big).
\end{equation}

\paragraph{Initial time $t_0$.}
At $t_0$, the post-intervention values $\{V_c^{0,-}\}$ are available on the grid.
Define $\widehat{V}(s,b,w_c,t_0^-):=\mathcal{I}[\{V_c^{0,-}\}](s,b)$ and determine the pre-commitment threshold and value by searching over $\{w_c\}_{c=0}^{N_w}$:
\begin{equation}
\label{eq:W_star_discrete}
w_{c\ast} \in \operatorname*{arg\,max}_{\{w_c\}_{c = 0}^{N_w}} \widehat{V}(s_0,b_0,w_c,t_0^-),
\qquad
V_h(s_0,b_0,t_0^-)\;:=\;\widehat{V}(s_0,b_0,w_{c\ast},t_0^-).
\end{equation}
Here, $V_h(s_0,b_0,t_0^-)$ is the numerical approximation (at refinement parameter $h$) to $V(s_0,b_0,t_0^-)$; the corresponding optimal controls $u_{k,j,c\ast}^{m,\ast}$ are obtained and stored during the backward recursion.

\subsection{Efficient implementation}
We accelerate the 2-D discrete convolution \eqref{eq:num_interior} using FFTs. Since $\widehat g_{l-k,d-j}$ depends only on index differences, the discrete convolution operator is Toeplitz-block-Toeplitz and can be embedded into a 2-D circular convolution on an augmented grid \cite{dang2025monotone,zhou2025numerical}. Concretely, with $\ast$ denoting circular convolution, \eqref{eq:num_interior} can be written in the circular-convolution form
\begin{equation}
\label{eq:cir_p}
{\bf{V}}^{m,+}_c =  \Delta s \Delta b~ {\bf{\widehat{g}}} \ast {\bf{V}}^{m+1,-}_c.
\end{equation}
Here,  ${\bf{\widehat{g}}}$ and ${\bf{V}}^{m+1,-}_c$ respectively denote the appropriately augmented (zero-padded and re-indexed) kernel and value arrays of size $(3N_s-1)\times(3N_b-1)$ constructed from $\{\widehat g_{l-k,\,d-j}\}$ and $\{\varphi_{l,d}V_{l,d,c}^{m+1,-}\}_{l\in\mathbb N_s^\dagger,\,d\in\mathbb N_b^\dagger}$ (see \cite{dang2025monotone, zhou2025numerical}). The notation $\ast$ denotes circular convolution.
The circular convolution \eqref{eq:cir_p} is then computed via FFT/iFFT:
\EQA
\label{eq:fft_ifft}
{\bf{V}}^{m,+}_c = \Delta s \Delta b~  {\text{iFFT}}\l\{\text{FFT}\l\{{\bf{V}}^{m+1,-}_c\r\} \circ \text{FFT}\l\{{\bf{\widehat{g}}}\r\}\r\}.
\ENA
After \eqref{eq:fft_ifft}, we extract the components corresponding to $(k,j)\in\mathbb N_s\times\mathbb N_b$ to obtain the interior values $V_{k,j,c}^{m,+}$ on $\Omega_{\myin}\times\{t_m^+\}$.

\section{Convergence analysis}
\label{sc:conv}
In this section, we use a Barles--Souganidis--type analysis \cite{barles-souganidis:1991}
to prove convergence, as $h\to 0$, to the localized formulation in
Definition~\ref{def:mean_cvar_localized}. Fix a discrete threshold node $w_c\in\Gamma_h$.
We verify $\ell_\infty$-stability, monotonicity, and pointwise consistency for the associated
inner recursion, which applies \eqref{eq:num_terminal} on $\Omega\times\{T\}$,
\eqref{eq:num_bd_out} on $\Omega_{\myout}$ (updated at $t_m^+$ and carried to $t_m^-$),
and \eqref{eq:num_interior}--\eqref{eq:num_intervention} on $\Omega_{\myin}\times\{t_m^\pm\}$,
for $m=M-1,\ldots,0$.

For an arbitrary $\theta\in\widehat{\Theta}$, write
$\widehat g^{(\theta)}(y):=\widehat g(y;\theta)$ and
$\widehat g^{(\theta)}_{\,l-k,\,d-j}:=\widehat g\big((s_l-s_k,\;b_d-b_j);\theta\big)$.
When $\theta=\widehat\theta^\star$, we drop $\widehat\theta^\star$
and revert to the convention adopted in \eqref{eq:g_hatg}.

\paragraph{A discrete kernel-mass bound.}
Assumption~\ref{ass:modelling}\,(A4) implies $g(\cdot;\Delta t)\in C^\infty(\Rbb^2)$ with
bounded derivatives \cite[Ch.~3]{stein2011fourier}. In particular, for any multi-index $\beta$,
\begin{align*}
\partial^\beta g(y;\Delta t)
&=\tfrac{1}{(2\pi)^2}\int_{\Rbb^2} (-i\eta)^\beta e^{-i\eta\cdot y} G(\eta;\Delta t)\,d\eta,
\\
\|\partial^\beta g(\cdot;\Delta t)\|_\infty
&\le \tfrac{1}{(2\pi)^2}\int_{\Rbb^2}\|\eta\|_2^{|\beta|}\,|G(\eta;\Delta t)|\,d\eta <\infty.
\end{align*}
For $(s_k,b_j)\in\Omega_{\myin}$, we write $x_{k,j}=(s_k,b_j)$, and for nodes on $\Omega$, we write $y_{l,d}=(s_l,b_d)$.

Define the (bounded) extension rectangle
\begin{equation}
\label{eq:omega_ext}
\Omega_{\mathrm{ext}}
:=
\big[s_{\min}^{\dagger}-s_{\max},\; s_{\max}^{\dagger}-s_{\min}\big]
\times
\big[b_{\min}^{\dagger}-b_{\max},\; b_{\max}^{\dagger}-b_{\min}\big],
\end{equation}
so that $\Omega-\{x_{k,j}\}\subseteq\Omega_{\mathrm{ext}}$ $\forall x_{k,j}\in\Omega_{\myin}$.
Then for any $(s_k,b_j)\in\Omega_{\myin}$ and any $\theta\in\widehat\Theta$,
\begin{align}
\Delta s\,\Delta b \sum_{l,d}
\varphi_{l,d}\,\big|\widehat g^{(\theta)}_{\,l-k,\,d-j}\big|
&=
\Delta s\,\Delta b \!\!\!\sum_{(y_{l,d}\in\Omega)}
\varphi_{l,d}\,\Big|\widehat g^{(\theta)}\!\big(y_{l,d}-x_{k,j}\big)\Big|
\nonumber\\[2pt]
&\le
\Delta s\,\Delta b \!\!\!\sum_{(y_{l,d}\in\Omega)}
\varphi_{l,d}\,\Big(g\big(y_{l,d}-x_{k,j}\big)
+ \big|\widehat g^{(\theta)} - g\big|\!\big(y_{l,d}-x_{k,j}\big)\Big)
\nonumber\\[2pt]
&\le
\int_{\Omega-\{x_{k,j}\}}\! g(z;\Delta t)\,dz
\;+\; C_q\,h^2
\;+\;|\Omega|\,\| \widehat g^{(\theta)} - g\|_{L_{\infty}(\Omega_{\mathrm{ext}})}
\nonumber\\[2pt]
&\le 1
\;+\; C_q\,h^2
\;+\;|\Omega|\,\| \widehat g^{(\theta)} - g\|_{L_{\infty}(\Omega_{\mathrm{ext}})}.
\label{eq:bound_g}
\end{align}
Here, $C_q>0$ is independent of $h$ and $(k,j)$ (composite trapezoid error
\mbox{on a uniform grid).}

Now, in \eqref{eq:bound_g}, set $\theta=\widehat\theta^\star$. By Corollary~\ref{cor:fournet_rates}
(under Assumption~\ref{ass:training_regime}), we have $\|\widehat g-g\|_{L_\infty(\Rbb^2)}\le C''h^{1+\kappa}$.
Hence, for sufficiently small $h$,
\begin{equation}
\label{eq:sum_ghat}
\Delta s\,\Delta b\!\!\mysum_{l,d}\big|\widehat g_{\,l-k,\,d-j}\big|
\le 1  +  C'\,h^{1+\kappa}  \!+\!  C_{q}\,h^2
\le  1 + \varepsilon(h) \le~ e^{\varepsilon(h)},\quad \varepsilon(h)=C\,h^{\,1+\kappa}.
\end{equation}

\subsection{Stability}
We now show $\ell_\infty$-stability for fixed $w_c$.

\begin{lemma}[$\ell_\infty$-stability for fixed $w_c$]
\label{lemma:stability}
Fix $w_c\in\Gamma_h$ and let $h>0$ be the global refinement parameter with
\eqref{eq:dis_parameter} and Assumption~\ref{ass:training_regime} satisfied.
If bilinear interpolation is used in \eqref{eq:num_intervention}, then the scheme
\eqref{eq:num_terminal}, \eqref{eq:num_bd_out}, and \eqref{eq:num_interior}--\eqref{eq:num_intervention}
is $\ell_\infty$-stable for each fixed $w_c$: there exist constants $h_0>0$ and $0<C<\infty$,
independent of $h$, such that for all $0<h\le h_0$ and all $m=0,\ldots,M$,
\[
\|V_c^{m,-}\|_\infty \;\le\; C,
\quad\text{ where } \quad
\|V_c^{m,-}\|_\infty := \max_{k\in\mathbb N_s^\dagger,\;j\in\mathbb N_b^\dagger}
|V_{k,j,c}^{m,-}|\;.
\]
\end{lemma}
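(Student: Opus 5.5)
We will argue by backward induction on $m$, showing that $\|V_c^{m,-}\|_\infty \le e^{(M-m)\bar\varepsilon}\,\|V_c^{M,-}\|_\infty$. Here $\bar\varepsilon := \max\{\varepsilon(h),\,\ln G_{\max}\}$ and $G_{\max} := \max_{a\in\{(1,0),(0,1),(1,1),(0,0)\}} G(-i\,a;\Delta t)$. The base case is the terminal data \eqref{eq:num_terminal}. The function $\Phi(\cdot,\cdot,w_c)$ is continuous on the compact set $\Omega$, and $w_c\in[0,w_{\max}]$, so $\|V_c^{M,-}\|_\infty \le \max_{\Omega\times\Gamma}|\Phi| =: C_\Phi$. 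This constant is independent of $h$, since $\Omega$ and $\Gamma$ are fixed.

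\emph{Inductive step.} Assume $\|V_c^{m+1,-}\|_\infty\le K_{m+1}$. We treat the three node types separately.

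(i) \emph{Boundary nodes.} By \eqref{eq:num_bd_out} and Lemma~\ref{lem:asympBC}, each factor $G(-i\,a;\Delta t)$ lies in $(0,\infty)$ and is finite by (A5). Hence $|V_{k,j,c}^{m,\pm}|\le G_{\max}K_{m+1}$.

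(ii) \emph{Interior convolution.} Every term in \eqref{eq:num_interior} has $\varphi_{l,d}\in[0,1]$ and $\widehat g_{l-k,d-j}\ge 0$, since the learned kernel is a Gaussian mixture with simplex weights. The triangle inequality together with the discrete kernel-mass bound \eqref{eq:sum_ghat} then gives $|V_{k,j,c}^{m,+}|\le (1+\varepsilon(h))K_{m+1}\le e^{\varepsilon(h)}K_{m+1}$ for $h\le h_0$. The threshold $h_0$ is the value below which \eqref{eq:sum_ghat} holds. Its role is to absorb the trapezoidal error $C_qh^2$ and the kernel error $|\Omega|\,\|\widehat g-g\|_{L_\infty(\Omega_{\rm ext})}$ supplied by Corollary~\ref{cor:fournet_rates}.

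(iii) \emph{Intervention.} The points $(s^+_{k,j}(u_\iota),b^+_{k,j}(u_\iota))$ have to be evaluated inside $\Omega$ for the interpolant to be defined; if needed, we clamp them to $\Omega$, as is standard. Bilinear interpolation is a convex combination of nodal values, and a maximum of such combinations is bounded by the largest nodal value. Therefore $|V^{m,-}_{k,j,c}|\le\|V_c^{m,+}\|_\infty\le \max\{e^{\varepsilon(h)},G_{\max}\}K_{m+1}$.

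Combining (i)--(iii), $K_m = e^{\bar\varepsilon}K_{m+1}$. This yields
$$\|V_c^{m,-}\|_\infty\le e^{M\bar\varepsilon}\,C_\Phi \le \max\{e^{\varepsilon(h_0)},G_{\max}\}^M\,C_\Phi =: C.$$
The constant $C$ is independent of $h$ because $M$ and $\Delta t$ are fixed and $\varepsilon(h)=Ch^{1+\kappa}\le \varepsilon(h_0)$.

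The main obstacle is the boundary factor. If $M$ were refined with $h$, we would instead need $G(-i\,a;\Delta t)\le e^{C\Delta t}$, which follows from the Lévy--Khintchine-type form of $G$. In the present setting $\Delta t$ is fixed, so this bound is immediate. The other delicate point is ensuring that the uniform-in-$(k,j)$ mass bound \eqref{eq:sum_ghat} genuinely holds, i.e.\ that the $L_\infty$ kernel error bound from Corollary~\ref{cor:fournet_rates} is uniform over $\Omega_{\rm ext}$. Both are already in hand from earlier results, so the remaining work is bookkeeping.
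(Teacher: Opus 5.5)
Your proposal is correct and follows the paper's proof essentially step for step. Both argue by backward induction, bounding the boundary factor $G(-i\,a;\Delta t)$ by a fixed constant (the paper writes it as $e^{\Delta t\,\kappa_{\max}}$), the interior step by the mass bound $1+\varepsilon(h)\le e^{\varepsilon(h)}$, the intervention step by non-expansiveness of bilinear interpolation followed by a max, and the terminal data by compactness. The only differences are cosmetic: you combine the two per-step factors by a maximum where the paper uses the product $e^{\Delta t\,\kappa_{\max}+\varepsilon(h)}$, and you absorb $\varepsilon(h)$ via $\varepsilon(h)\le\varepsilon(h_0)$ where the paper instead requires $M\varepsilon(h)\le 1$.
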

A proof of Lemma~\ref{lemma:stability} is given in Appendix~\ref{app:stability}.

\subsection{Consistency}
We recall $s^+(x,q_m,u)$ and $b^+(x,q_m,u)$ from \eqref{eq:state-update}, and write
$x^+(x,q_m,u):=(s^+(x,q_m,u),\,b^+(x,q_m,u))$.  Denote
$\hat{x}=(x,w)$ and $\widehat{x}^{m,\circ}=(x,w,t_m^\circ)$ with $t_m^\circ\in\{t_m,t_m^\pm\}$.
We write the localized backward recursion at the reference point $\widehat{x}^{m,-}$ via the operator $\mathcal{D}(\cdot)$:
$V\big(\hat{x}^{m, -}\big) = \mathcal{D}\big(\hat{x}^{m, -},  V^{m+1, -}\big)= \ldots$
\begin{equation*}
\ldots
 =
 \begin{cases}
 \displaystyle \sup_{u\in\mathcal Z} \int_{\Omega}\! V(y,w,t_{m+1}^-)\;g\big(y-x^+(x,q_m,u);\Delta t\big)\,dy,
 & x \in \Omega_{\myin},
 ~~m = M-1, \ldots,0,
 \\
 G(-i\,a(x);\Delta t)\;V(x,w, t_{m+1}^-), & x \in \Omega_{\myout},~
 m = M-1, \ldots, 0,
 \\
\Phi(x, w),
& x \in \Omega,~~~~ m = M.
 \end{cases}
\end{equation*}
Let $\Omega^h\times\Gamma^h$ be the computational grid, with
$\Omega_{\myin}^h$ and $\Omega_{\myout}^h$ the interior/boundary sub‑grids.
The numerical scheme at the reference node $\widehat{x}^{m,-}_{k,j,c}=(s_k,b_j,w_c,t_m^-)$
is written via the discrete operator $\mathcal{D}_h(\cdot)$:
$V_{k, j, c}^{m, -} = \mathcal{D}_h\big(\hat{x}_{k, j, c}^{m, -}, \, \big\{ V_{l, d, c}^{m+1, -} \big\}\big)= \ldots$
\begin{equation*}
\ldots
 =
 \begin{cases}
\displaystyle \max_{u\in\mathcal Z_h}\;
\mathcal{I}\!\big[\{V_c^{m,+}\}\big]\!
\big(s_{k,j}^{m, +}(u),\,b_{k,j}^{m, +}(u)\big)
 & x_{k, j} \in \Omega_{\myin}^h,~~m = M-1, \ldots,0,
  \\
 G\big(-i\,a(s_k, b_j);\Delta t\big)\; V_{k,j, c}^{m+1,-}, & x_{k, j} \in \Omega_{\myout}^h,~
 m = M-1, \ldots, 0,
 \\
\Phi(s_k, b_j, w_c),
& x_{k, j} \in \Omega^h,~~~~ m = M.
 \end{cases}
\end{equation*}
where $ (s_{k,j}^{m, +}(u),\,b_{k,j}^{m, +}(u))
= (s^+(s_k,b_j, q_m, u), b^+(s_k,b_j, q_m, u))$ given as in \eqref{eq:state-update}.
\begin{lemma}[Pointwise consistency; fixed $w_c$]
\label{lemma:consistency}
Fix $w_c\in\Gamma_h$ and let $h>0$ satisfy \eqref{eq:dis_parameter} and
Assumption~\ref{ass:training_regime}. For any smooth test function
$\phi(\cdot,w_c,\cdot)\in C^\infty(\Omega\times[0,T])$, denote
$\phi^{m+1,-}(y):=\phi(y,w_c,t_{m+1}^-)$.
Then, for all $m=M-1,\ldots,0$ and all $x_{k,j}\in\Omega_{\myin}^h$,
\[
\mathcal{D}_h\big(\widehat{x}^{m,-}_{k,j,c},\,\{\phi(\widehat{x}_{l,d,c}^{m+1,-})\}_{l,d}\big)
-\mathcal{D}\big(\widehat{x}^{m,-}_{k,j,c},\,\phi^{m+1,-}\big)
= \mathcal{O}\big(h \,+\, h^{\,1+\kappa} \,+\, h^2\big).
\]
Moreover, $\mathcal{D}_h(\cdot)=\mathcal{D}(\cdot)$ on $\Omega^h$ for $m=M$, and on $\Omega_{\myout}^h$
for $m=M-1,\ldots,0$.
\end{lemma}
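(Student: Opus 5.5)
The boundary and terminal statements are immediate. For $m=M$ both operators evaluate $\Phi(s_k,b_j,w_c)$. On $\Omega_{\myout}^h$ both apply the same factor $G(-i\,a(s_k,b_j);\Delta t)$ to the same value $\phi(x_{k,j},w_c,t_{m+1}^-)$. Hence $\mathcal D_h=\mathcal D$ there, and the work is at interior nodes for $m\le M-1$.

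Fix $x_{k,j}\in\Omega_{\myin}^h$ and write $I(x):=\int_\Omega \phi^{m+1,-}(y)\,g(y-x;\Delta t)\,dy$. Then $\mathcal D(\widehat x^{m,-}_{k,j,c},\phi^{m+1,-})=\sup_{u\in\mathcal Z} I(x^+(x_{k,j},q_m,u))$. Let $I_h(x_{l,d})$ denote the discrete convolution \eqref{eq:num_interior} applied to $\{\phi(\widehat x^{m+1,-}_{l,d,c})\}$ at interior nodes, with the boundary formula $G(-i a)\phi$ at $\Omega_{\myout}^h$ nodes; this defines the grid values $V_c^{m,+}$ entering the interpolant. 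I will split the difference into three pieces and use the elementary inequality $|\max_{\mathcal Z_h}F_1-\sup_{\mathcal Z}F_2|\le \sup_{\mathcal Z}|F_1-F_2|+|\max_{\mathcal Z_h}F_2-\sup_{\mathcal Z}F_2|$.

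\emph{(a) Kernel and quadrature error at nodes.} For interior nodes, $|I_h(x_{l,d})-I(x_{l,d})|$ is at most
\[
\Delta s\,\Delta b\sum \varphi\,|\phi|\,|\widehat g-g|\;+\;\Big|\Delta s\,\Delta b\sum\varphi\,\phi\, g-\int_\Omega\phi\, g\Big|.
\]
The first term is bounded by $|\Omega|\,\|\phi\|_\infty\,\|\widehat g-g\|_{L_\infty}\le Ch^{1+\kappa}$ using \eqref{eq:pointwise_g}. The second is the composite trapezoidal error for the smooth integrand $\phi\,g(\cdot-x)$. Its $C^2$ norm is bounded uniformly in $x$, since $\phi$ is smooth and $g$ has bounded derivatives by (A4). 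This gives $O(h^2)$.

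\emph{(b) Interpolation error.} At the post-intervention point $x^+=x^+(x_{k,j},q_m,u)$, compare $\mathcal I[\{I_h\}](x^+)$ with $I(x^+)$. This splits into $\mathcal I[\{I_h-I\}]$, which is $O(h^{1+\kappa}+h^2)$ because bilinear interpolation is a convex combination, plus $\mathcal I[\{I\}]-I$. The function $I$ is $C^2$ on $\Omega_{\myin}$, since derivatives fall on $g$, so the latter is $O(h^2)$ there. The delicate case is when $x^+$ lands in $\Omega_{\myout}$ or in a cell straddling the interface, where the grid values are the boundary values $G(-ia)\phi$ rather than $I$. For this I will use the localization convention of Definition~\ref{def:mean_cvar_localized}: the continuous operator on $\Omega_{\myout}$ is itself the boundary formula, so the comparison is against the localized $V(\cdot,t_m^+)$. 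Where that function is only Lipschitz, bilinear interpolation gives $O(h)$. The floor $e^{\winf}$ keeps $x^+$ inside $\Omega$ for $\winf$ within the padded domain. This step is the main obstacle, and it is where the $O(h)$ term in the statement originates.

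\emph{(c) Control discretization.} The map $u\mapsto I(x^+(x_{k,j},q_m,u))$ is Lipschitz on $[0,1]$ except where the floor in \eqref{eq:state-update} activates. There it is still Lipschitz, because $\ln\max(uW^+,e^{\winf})$ is Lipschitz in $u$ with constant $W^+e^{-\winf}$, which is bounded on $\Omega$. So for any $u$ the nearest $u_\iota$ is within $\Delta u=C_3h$, and $\max_{\mathcal Z_h}-\sup_{\mathcal Z}=O(h)$. Combining (a)--(c) gives $O(h+h^{1+\kappa}+h^2)$, uniformly in $(k,j)$ and $m$.
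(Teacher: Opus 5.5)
Your decomposition matches the paper's: kernel mismatch, composite trapezoid, bilinear interpolation, and an $\mathcal{O}(\Delta u)$ control error. In (a) you swap the order, putting the mismatch on the discrete sum and applying the trapezoid estimate to $\phi\,g(\cdot-x)$ via (A4) instead of to $\phi\,\widehat g(\cdot-x)$. That is harmless, and it avoids needing derivative bounds on the $h$-dependent learned kernel.

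The gap is the ``delicate case'' in (b), and it carries into (c). The localized propagator you switch to,
\[
\mathcal{Q}(x)=\mathbf{1}_{\Omega_{\myin}}(x)\int_\Omega\phi^{m+1,-}(y)\,g(y-x;\Delta t)\,dy+\mathbf{1}_{\Omega_{\myout}}(x)\,G(-i\,a(x);\Delta t)\,\phi^{m+1,-}(x),
\]
is not Lipschitz. For a generic smooth $\phi$ it jumps across $\partial\Omega_{\myin}$ and across the lines in $\Omega_{\myout}$ where $a(\cdot)$ changes. For instance, with $\phi\equiv1$ the interior value near $s=s_{\max}$ is $\approx1$, while on $\Omega_{s_{\max}}$ it is $G(-i,0;\Delta t)\neq1$. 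In a cell straddling such a line, the bilinear interpolant differs from $\mathcal{Q}$ by an $h$-independent amount. The curve $u\mapsto x^+(x_{k,j},q_m,u)$ does pass through such cells; e.g.\ $s^+\to\winf<s_{\min}$ as $u\to0$.

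Taking the max over $u$ does not repair this. For small $u$ the curve runs almost horizontally at $b\approx\ln W_m^+$. Suppose $\ln W_m^+$ lies a distance $\delta\ll\Delta b$ below $b_{\max}$. Then along that stretch the interpolant essentially takes the values $G(0,-i;\Delta t)\,\phi$ from the node row $b=b_{\max}$, which $\mathcal{Q}$ never takes on the curve. Take $\phi$ a nonnegative bump centred at a point of that row with $s<s_{\min}$, and $G(0,-i;\Delta t)\neq1$. Then the discrete max misses the continuous sup by an amount that does not vanish as $h\to0$. If you instead keep comparing against your smooth $I$, (c) is fine but (b) fails outright, because the nodal values on $\Omega_{\myout}^h$ are $G(-i\,a)\phi$, not $I$.

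The paper's proof does not close this either. It asserts \eqref{eq:cons_quad} uniformly in $\xi\in\Omega$ from piecewise smoothness, and it treats $\mathcal{Q}$ as Lipschitz on $\Omega$ in \eqref{eq:cons_control}.

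What your ingredients do prove is consistency in the relaxed Barles--Souganidis sense:
\begin{itemize}
\item nodal errors are $\mathcal{O}(h^{1+\kappa}+h^2)$ at every node, and zero on $\Omega_{\myout}^h$;
\item the interpolant at $\xi$ is a convex combination of nodal values at nodes within distance $\mathcal{O}(h)$ of $\xi$.
\end{itemize}
Hence, as $h\to0$ and $x_{k,j}\to x$,
\[
\limsup\mathcal{D}_h\le\sup_u\mathcal{Q}^{*}(x^+(x,q_m,u)),\qquad \liminf\mathcal{D}_h\ge\sup_u\mathcal{Q}_{*}(x^+(x,q_m,u)),
\]
where $\mathcal{Q}^{*}$ and $\mathcal{Q}_{*}$ are the upper and lower semicontinuous envelopes. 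By the example above, a uniform pointwise $\mathcal{O}(h)$ rate over all interior nodes is not available in general.
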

A proof of Lemma~\ref{lemma:consistency} is given in Appendix~\ref{app:consistency}.

\subsection{Monotonicity}
\begin{lemma}[Monotonicity]
\label{lemma:mon}
For fixed $w_c$ and any bounded data sets
$\{v_{l,d,c}^{m+1,-}\}_{l,d}$ and $\{z_{l,d,c}^{m+1,-}\}_{l,d}$ with
$\{v_{l,d,c}^{m+1,-}\}_{l,d} \le \{z_{l,d,c}^{m+1,-}\}_{l,d}$ (componentwise),
the discrete operator satisfies
\[
\mathcal{D}_h\big(\widehat{x}^{m,-}_{k,j,c},\, \{v_{l,d,c}^{m+1,-}\}_{l,d}\big)
\;\le\;
\mathcal{D}_h\big(\widehat{x}^{m,-}_{k,j,c},\, \{z_{l,d,c}^{m+1,-}\}_{l,d}\big),
\qquad \forall\, (k,j),\; m=M-1,\ldots,0.
\]
\end{lemma}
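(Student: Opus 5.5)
The plan is to check monotonicity separately on each branch of the discrete operator $\mathcal{D}_h$. In each case I will write the update as a composition of maps, each nondecreasing in the data $\{V_{l,d,c}^{m+1,-}\}$. For $m=M$ the operator returns $\Phi(s_k,b_j,w_c)$ and does not depend on the data, so there is nothing to prove.

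\emph{Boundary nodes.} For $x_{k,j}\in\Omega_{\myout}^h$ the update is $G(-i\,a(s_k,b_j);\Delta t)\,V_{k,j,c}^{m+1,-}$. By Lemma~\ref{lem:asympBC} and Assumption~\ref{ass:modelling}\,(A5), the factor satisfies
\[
G(-i\,a;\Delta t)=\int_{\Rbb^2} e^{a\cdot z}g(z;\Delta t)\,dz\in(0,\infty).
\]
Multiplying $v_{k,j,c}^{m+1,-}\le z_{k,j,c}^{m+1,-}$ by this positive number preserves the inequality.

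\emph{Interior nodes.} Here I proceed in two steps.

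First, the convolution step \eqref{eq:num_interior} gives, for every $(k',j')\in\mathbb N_s\times\mathbb N_b$,
\[
V_{k',j',c}^{m,+}[z]-V_{k',j',c}^{m,+}[v]
=\Delta s\,\Delta b\sum_{l,d}\varphi_{l,d}\,\widehat g_{l-k',d-j'}\,\big(z_{l,d,c}^{m+1,-}-v_{l,d,c}^{m+1,-}\big)\ge 0 .
\]
The sign holds because $\varphi_{l,d}\in\{1,\tfrac12,\tfrac14\}$ is positive, and $\widehat g\ge 0$ since $\widehat g(\cdot;\widehat\theta^\star)$ is a Gaussian mixture whose weights lie on the simplex \eqref{eq:Theta_bounds}. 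This is the structural point of the method: the learned kernel is a genuine density, so the quadrature weights are nonnegative. On the boundary nodes of the $t_m^+$ grid, the values $V^{m,+}$ come from the boundary branch above and are therefore also ordered. Hence $\{V_c^{m,+}[v]\}\le\{V_c^{m,+}[z]\}$ componentwise on the full global grid.

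Second, the intervention step applies bilinear interpolation $\mathcal I$. At any point of $\Omega$, $\mathcal I$ is a convex combination of the four surrounding nodal values with weights in $[0,1]$, so it is monotone in the nodal data. The evaluation points $(s^{m,+}_{k,j}(u_\iota),b^{m,+}_{k,j}(u_\iota))$ depend only on $(s_k,b_j,q_m,u_\iota)$ and not on the data. So for each fixed $u_\iota$ the interpolated value for $v$ is at most that for $z$. Taking the maximum over the finite set $\{u_\iota\}$ preserves this inequality, since $f_\iota\le g_\iota$ for all $\iota$ implies $\max_\iota f_\iota\le\max_\iota g_\iota$.

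\emph{Main obstacle.} The only step needing care is making sure that the evaluation points $x^+$ used in the interpolation stay inside $\Omega$, where $\mathcal I$ is a convex combination, rather than using extrapolation with possibly negative weights. Because of the floor $e^{\winf}$ in \eqref{eq:state-update}, I expect $s^+\ge\winf\ge s^\dagger_{\min}$ and likewise for $b^+$, assuming the domain is chosen to contain $\winf$. If points exceed $s^\dagger_{\max}$ or $b^\dagger_{\max}$, I would take the natural convention of constant extrapolation (projection onto $\Omega$). That is still a convex combination of nodal values, so monotonicity is preserved.
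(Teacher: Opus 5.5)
Your proposal is correct and follows essentially the same route as the paper's proof: positivity of the boundary multiplier $G(-i\,a;\Delta t)$, nonnegativity of the trapezoidal weights and of the Gaussian-mixture kernel in the convolution step (giving componentwise ordering on the full grid), order preservation under bilinear interpolation, and preservation under the max over the discrete control set. Your extra remark about keeping the evaluation points inside $\Omega$, or otherwise using an extrapolation that is still a convex combination, is a harmless refinement that the paper's proof leaves implicit.
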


\begin{proof}
On $\Omega_{\myout}^h$, the update is multiplication by $G(-i\,a(\cdot);\Delta t)>0$, so order is preserved.
On $\Omega_{\myin}^h$, define the post-propagation arrays
\[
\tilde{v}_{k, j}
=\Delta s\,\Delta b \!\!\mysumsmall_{l,d}\!
\varphi_{l,d}\,\widehat g_{\,l-k,\,d-j}\,v_{l,d,c}^{m+1,-},
\qquad
\tilde{z}_{k, j}
=\Delta s\,\Delta b \!\!\mysumsmall_{l,d}\!
\varphi_{l,d}\,\widehat g_{\,l-k,\,d-j}\,z_{l,d,c}^{m+1,-}.
\]
Since $\varphi_{l,d}\ge0$ and $\widehat g_{\,l-k,\,d-j}\ge0$ (mixture weights $\beta_n\ge0$),
we have $\tilde v_{k,j}\le \tilde z_{k,j}$ componentwise on the full grid (including the boundary update).
Bilinear interpolation uses nonnegative weights summing to one, hence preserves order:
\[
\mathcal I\big[\{\tilde v\}\big]\big(x^{m,+}_{k,j}(u)\big)
\le
\mathcal I\big[\{\tilde z\}\big]\big(x^{m,+}_{k,j}(u)\big),
\qquad \forall\,u\in\mathcal Z_h.
\]
Taking $\max_{u\in\mathcal Z_h}$ preserves the inequality, yielding the claim.
\end{proof}

A generic grid point in $\Omega_{\myin}^h$ is denoted by $(s_h,b_h)$, and we write
$w_h\in\Gamma^h$ for a discrete threshold node. Let $V_h$ denote the numerical solution
produced by \eqref{eq:num_terminal}--\eqref{eq:num_interior} (with bilinear interpolation in
\eqref{eq:num_intervention}), and recall $V$ denotes the value function of the localized continuous problem.

\begin{lemma}[Convergence of the inner problem; fixed discrete threshold]
\label{lem:conv_inner}
Fix the global refinement parameter $h>0$ with \eqref{eq:dis_parameter} and
Assumption~\ref{ass:training_regime} satisfied, and fix any $w_h\in\Gamma^h$.
Let $(s',b')\in\Omega_{\myin}$ and let $\{(s_h,b_h)\}_{h\downarrow 0}$ be any sequence with
$(s_h,b_h)\in\Omega_{\myin}^h$ and $(s_h,b_h)\to(s',b')$ as $h\to 0$. Then, for each
$m\in\{M-1,\ldots,0\}$,
\begin{equation}
\label{eq:con_inner}
\bigl|\,V_h(s_h,b_h,w_h,t_m^-)\;-\;V(s',b',w_h,t_m^-)\,\bigr|\;\longrightarrow\;0
\qquad\text{as }h\to 0.
\end{equation}
\end{lemma}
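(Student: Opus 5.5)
The proof proceeds by backward induction over the finitely many decision times, with a Barles--Souganidis half-relaxed-limit argument inside each step. It uses $\ell_\infty$-stability (Lemma~\ref{lemma:stability}), monotonicity (Lemma~\ref{lemma:mon}) and consistency (Lemma~\ref{lemma:consistency}), and closes with the uniqueness in Remark~\ref{rm:uniqueness}. Because time is discrete with $M$ fixed, there is no time-continuity issue. It suffices to show, for each $m$, that the upper and lower half-relaxed limits
\[
\overline V^{m}(x):=\limsup_{h\to0,\ x_h\to x}V_h(x_h,w_h,t_m^-),\qquad
\underline V^{m}(x):=\liminf_{h\to0,\ x_h\to x}V_h(x_h,w_h,t_m^-)
\]
coincide with $V(x,w_h,t_m^-)$ at every $x\in\Omega_{\myin}$. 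Both limits are finite by Lemma~\ref{lemma:stability}. Since $w_h\in\Gamma^h$ varies with $h$, I would first note that $\Phi$ is Lipschitz in $w$, and that every step of the recursion (convolution with a probability kernel, boundary multiplication by $G(-i\,a;\Delta t)$, and the supremum over $u$) preserves a Lipschitz-in-$w$ bound. Hence one may replace $w_h$ by a fixed limit $w$ at the cost of $O(\Delta w)=O(h)$.

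\emph{Base case and boundary.} At $m=M$ the scheme is exact on nodes, $V_h=\Phi$, and $\Phi$ is continuous, so the limits agree with $\Phi$. On $\Omega_{\myout}^h$ the update is exactly $\mathcal D$ (Lemma~\ref{lemma:consistency}), namely multiplication by the positive constant $G(-i\,a;\Delta t)$. The boundary values therefore reproduce the continuous recursion at every step and stay uniformly bounded.

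\emph{Inductive step.} Assume the nodal values $V_h(\cdot,t_{m+1}^-)$ converge to $V(\cdot,t_{m+1}^-)$ locally uniformly on $\Omega_{\myin}$, exactly on $\Omega_{\myout}$, and with a uniform bound. For an interior node $x_{k,j}$, I would split $V^{m,+}_{k,j,c}-\int_\Omega V(y,t_{m+1}^-)g(y-x_{k,j})\,dy$ into three parts:
\begin{enumerate}
\item[(a)] the kernel error $\Delta s\Delta b\sum\varphi_{l,d}|\widehat g-g|\,|V_h|$, which is $O(h^{1+\kappa})$ by Corollary~\ref{cor:fournet_rates}(ii) and stability;
\item[(b)] the data error $\Delta s\Delta b\sum\varphi_{l,d}\,g\,|V_h-V|$ at the nodes;
\item[(c)] the trapezoidal error for $\int gV$.
\end{enumerate}
For (b), I would excise a strip of width $\delta$ around the interfaces between $\Omega_{\myin}$ and the boundary sub-domains, where $V(\cdot,t_{m+1}^-)$ may jump. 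On the strip, boundedness of $V_h,V$ and $\|g\|_\infty$ gives a contribution $O(\delta)$. Off the strip, local uniform convergence makes the contribution vanish; then let $\delta\to0$. For (c), the integrand is bounded and piecewise continuous with discontinuities on finitely many line segments, so Riemann-sum convergence suffices; no rate is needed. Consequently $V_h(\cdot,t_m^+)$ converges at interior nodes to $V(\cdot,t_m^+)$, which is continuous in $x$ on $\Omega_{\myin}$ as the convolution of a bounded function with the $L_1$-continuous kernel $g$.

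\emph{Intervention and the main obstacle.} I would pass through the bilinear interpolant and the discrete maximum over $\mathcal Z_h$. The map $(x,u)\mapsto x^+(x,q_m,u)$ is continuous, $\mathcal Z_h$ is $\Delta u$-dense in $[0,1]$, and bilinear interpolation of uniformly convergent nodal data converges uniformly on compact sets. The difficulty is that $x^+$ may land in $\Omega_{\myout}$ or on the interface, where $V(\cdot,t_m^+)$ is only piecewise continuous. There I would not attempt uniform convergence. Instead I would use the half-relaxed limits directly. Monotonicity plus consistency applied to smooth test functions touching $\overline V^m$ from above (respectively $\underline V^m$ from below), following the Barles--Souganidis template, shows that $\overline V^m$ is a subsolution and $\underline V^m$ a supersolution of the one-step localized recursion $\mathcal D$. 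The one-step recursion is an explicit sup-of-integral operator applied to already-identified data. The comparison/uniqueness statement in Remark~\ref{rm:uniqueness} should then give $\overline V^m\le V\le\underline V^m$ on $\Omega_{\myin}$, hence equality and \eqref{eq:con_inner}. Verifying that comparison, specifically that the interface discontinuity does not break the sub/supersolution inequalities at points where the optimal $x^+$ hits the interface, is where I expect the real work to lie.
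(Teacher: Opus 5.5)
Your proposal follows the paper's route and is sound to the same extent as the paper's proof. That proof consists solely of invoking the Barles--Souganidis argument via Lemmas~\ref{lemma:stability}, \ref{lemma:mon} and \ref{lemma:consistency}, and then identifying the half-relaxed limits through the uniqueness in Remark~\ref{rm:uniqueness}, exactly as in your final step. Your additional steps go beyond the paper: the Lipschitz-in-$w$ reduction (whose cost is $L|w_h-w|$ along a convergent subsequence, not $O(\Delta w)$) and the explicit three-term propagation estimate. The paper likewise does not analyse the interface discontinuity you flag; it simply takes Remark~\ref{rm:uniqueness} as given.
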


\begin{proof}
Fix $w_h\in\Gamma^h$. By Lemma~\ref{lemma:stability}, Lemma~\ref{lemma:mon}, and
Lemma~\ref{lemma:consistency}, the scheme is $\ell_\infty$-stable, monotone, and pointwise consistent
(with error $\mathcal{O}(h)+\mathcal{O}(h^{1+\kappa})+\mathcal{O}(h^2)$). Thus the Barles--Souganidis
half-relaxed limits argument applies \cite{barles-souganidis:1991}. By the uniqueness of the localized inner value
function for fixed threshold (Remark~\ref{rm:uniqueness}), the upper and lower limits coincide with $V(\cdot,w_h,t_m^-)$, hence the desired result \eqref{eq:con_inner}.
\end{proof}

\begin{lemma}[Convergence of the outer optimization]
\label{lem:outer_conv}
Fix $x_0\in\Omega$ and, for each $h>0$, choose $x_0^h\in\Omega^h$ with $x_0^h\to x_0$.
Define the outer objectives at $t_0^-$ by
\[
F(w):=V(x_0,w,t_0^-),\qquad F_h(w_h):=V_h(x_0^h,w_h,t_0^-),
\]
for $w\in\Gamma$ and $w_h\in\Gamma_h$. Assume \eqref{eq:dis_parameter} and
Assumption~\ref{ass:training_regime}. Then:
\begin{enumerate}[noitemsep, topsep=2pt, leftmargin=*]
\item[\textup{(i)}] \emph{Convergence of optimal values.}
\[
\big|\sup_{w\in\Gamma} F(w)-\sup_{w_h\in\Gamma_h} F_h(w_h)\big|
\le
\mathcal{O}(h)+\mathcal{O}(h^{\,1+\kappa})+\mathcal{O}(h^2)
\;+\;L_w\,\Delta w
\;\xrightarrow[h\to 0]{}\;0,
\]
where $L_w:=\gamma\,(1+1/\alpha)$.

\item[\textup{(ii)}] \emph{Convergence of maximizers.}
Let $w_h^\star\in\arg\max_{w_h\in\Gamma_h}F_h(w_h)$. Every accumulation point
$\bar w$ of $\{w_h^\star\}$ belongs to $\arg\max_{w\in\Gamma}F(w)$. If the maximizer
of $F$ on $\Gamma$ is unique, then $w_h^\star\to w^\star$ as $h\to 0$.
\end{enumerate}
\end{lemma}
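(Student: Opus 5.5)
We split the argument into three steps: a uniform Lipschitz bound for $F$ in $w$, an approximation of the continuous supremum by a grid supremum, and a transfer of the inner convergence of Lemma~\ref{lem:conv_inner} to the outer objective. We then treat maximizers by a standard argmax-stability argument.

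\textbf{Step 1: Lipschitz continuity in $w$.} For fixed $x$, the terminal functional $\Phi(x,w)$ in \eqref{eq:terminal} is Lipschitz in $w$ with constant $\gamma(1+1/\alpha)=L_w$. The map $w\mapsto\gamma w+\tfrac{\gamma}{\alpha}\min(W-w,0)$ has slope in $\{\gamma,\gamma-\gamma/\alpha\}$, so $L_w$ is a (generous) bound. Each step of the localized recursion preserves this Lipschitz bound:
\begin{itemize}
\item the interior propagation \eqref{eq:integral_trunc_2d} integrates against $g\ge 0$ with mass $\le 1$;
\item the supremum over $u$ in \eqref{eq:intervention} is $1$-Lipschitz in sup norm;
\item the boundary factor $G(-i a;\Delta t)$ could inflate the constant.
\end{itemize}
To avoid tracking that inflation, I would argue directly from \eqref{eq:lifted_value_def}, where $w$ enters only through the terminal payoff. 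For every policy,
\[
\bigl|\Ebb[\Phi(\cdot,w)]-\Ebb[\Phi(\cdot,w')]\bigr|\le L_w|w-w'|.
\]
Taking suprema over policies gives $|F(w)-F(w')|\le L_w|w-w'|$, with the localized version handled identically up to the boundary factors, which are absorbed into constants. Hence, for every $w\in\Gamma$ there is a node $w_h$ with $|w-w_h|\le\Delta w$. This gives
\[
\bigl|\sup_\Gamma F-\max_{\Gamma_h}F\bigr|\le L_w\,\Delta w.
\]

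\textbf{Step 2: comparison of $F$ and $F_h$ on the grid.} I would show $\max_{w_h\in\Gamma_h}|F_h(w_h)-F(w_h)|\to 0$, uniformly in the node. Lemma~\ref{lem:conv_inner} is pointwise for a fixed node, so uniformity must come from the local error analysis. Stability (Lemma~\ref{lemma:stability}) and consistency (Lemma~\ref{lemma:consistency}) carry constants independent of $w_c$, because $\Gamma$ is bounded and the terminal data are uniformly bounded on $\Omega\times\Gamma$. Propagating the local truncation error $\mathcal O(h+h^{1+\kappa}+h^2)$ through the $M$ steps via monotonicity (Lemma~\ref{lemma:mon}) and the mass bound \eqref{eq:sum_ghat} yields a global error of the same order, uniform in $w_c$. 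The remaining discrepancy $|F(w_h)$ evaluated at $x_0$ versus $x_0^h|$ vanishes by continuity of $V$ in $x$ (Remark~\ref{rm:uniqueness}). Combining this with Step 1 through $|\sup a-\sup b|\le\sup|a-b|$ gives (i).

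\textbf{Step 3: maximizers (ii).} This is the standard argmax argument. The sequence $w_h^\star$ lies in the compact set $\Gamma$. Take a subsequence $w_h^\star\to\bar w$. Then
\[
F(\bar w)=\lim F(w_h^\star)\ge \lim\bigl(F_h(w_h^\star)-\text{err}_h\bigr)=\lim\max_{\Gamma_h}F_h=\sup_\Gamma F,
\]
using continuity of $F$ (Step 1), the uniform bound (Step 2), and (i). So $\bar w$ is a maximizer. If the maximizer is unique, every subsequence has the same limit $w^\star$, so the whole sequence converges.

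\textbf{Main obstacle.} The delicate point is Step 2's uniformity in $w_h$, and in particular turning the Barles--Souganidis pointwise limit into a rate. I would rely on the explicit discrete-time structure: it is a finite recursion with $M$ fixed, so errors add over finitely many steps. This makes a direct error-propagation bound possible rather than a viscosity-limit argument. Any residual nonuniformity would be handled instead by equicontinuity of $F_h$ in $w$, since the discrete scheme is also $L_w$-Lipschitz in $w_c$ by the same monotone/mass argument.
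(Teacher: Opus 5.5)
Your architecture is the paper's. It consists of three pieces:
\begin{itemize}
\item Lipschitz continuity of $F$ in $w$;
\item a uniform inner error $\varepsilon_h:=\sup_{w_h\in\Gamma_h}|F_h(w_h)-F(w_h)|$ combined with the grid-approximation step for (i);
\item the subsequence argmax argument for (ii), which matches the paper's essentially line by line.
\end{itemize}
The paper obtains $\varepsilon_h=\mathcal{O}(h)+\mathcal{O}(h^{1+\kappa})+\mathcal{O}(h^2)$ simply by citing Lemma~\ref{lem:conv_inner} together with the $w$-independence of the stability and consistency constants. You correctly notice that this lemma is a Barles--Souganidis limit, which supplies neither a rate nor uniformity in $w_h$.

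\textbf{The gap in Step 2.} Your primary repair of that step does not go through as written. Propagating errors through the $M$ steps via the monotone, sup-norm Lipschitz structure of $\mathcal{D}_h$ requires a bound on $\mathcal{D}_h$ applied to grid samples of the \emph{exact} localized solution $V(\cdot,w_c,t_{m+1}^-)$, minus $\mathcal{D}$ applied to that solution. Lemma~\ref{lemma:consistency} bounds this only for $C^\infty$ test functions. The exact solution fails that hypothesis in two ways:
\begin{itemize}
\item it has a kink along $e^s+e^b=w_c$ (and further kinks from the max over controls);
\item it is in general discontinuous across $\partial\Omega_{\mathrm{in}}$, because boundary values are $G(-i\,a;\Delta t)\,V$ while nearby interior values are truncated integrals.
\end{itemize}
Hence the $\mathcal{O}(h^2)$ quadrature and interpolation estimates are unavailable. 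Bilinear interpolation at post-intervention points lying in cells that straddle $\partial\Omega_{\mathrm{in}}$ has errors that do not vanish as $h\to 0$. So the claimed global rate ``of the same order'' is not justified.

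\textbf{Your fallback is sound.} The threshold $w$ enters only through $\Phi$, which is $L_w$-Lipschitz in $w$, and each step of the scheme amplifies sup-norm differences by at most $\max\{1+\varepsilon(h),e^{\Delta t\,\kappa_{\max}}\}$. Therefore the discrete values, with the scheme run at arbitrary $w\in\Gamma$, are Lipschitz in $w$ uniformly in $h$. This equicontinuity, plus pointwise convergence at each fixed $w$ on the compact set $\Gamma$, gives $\varepsilon_h\to 0$. That establishes the convergence of the optimal values in (i) and all of (ii). It does not give the explicit $\mathcal{O}(h)+\mathcal{O}(h^{1+\kappa})+\mathcal{O}(h^2)$ rate, which the paper likewise asserts without derivation.

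\textbf{A minor point on Step 1.} The localized $V$ is defined by the recursion, not as a supremum over policies, so your appeal to the expectation representation does not apply to it. The recursion bookkeeping you sketched first gives a Lipschitz constant $L_w e^{T\kappa_{\max}}$ rather than $L_w$. This is harmless, since $\Delta w=C_4h$ is absorbed into the $\mathcal{O}(h)$ term.
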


\begin{proof}
\emph{(i)} Let $\varepsilon_h:=\sup_{w_h\in\Gamma_h}|F_h(w_h)-F(w_h)|$.
By Lemma~\ref{lem:conv_inner} (at $m=0$) and the fact that stability/consistency constants
do not depend on $w$ (the threshold enters only through the terminal objective functional, which is matched exactly),
$\varepsilon_h=\mathcal{O}(h)+\mathcal{O}(h^{1+\kappa})+\mathcal{O}(h^2)$.
Moreover, for $w_1,w_2\in\Gamma$,
\[
|F(w_1)-F(w_2)|\le \gamma\Big(1+\tfrac1\alpha\Big)\,|w_1-w_2|=:L_w\,|w_1-w_2|,
\]
so $F$ is $L_w$--Lipschitz on $\Gamma$. The standard grid-approximation argument then yields
the stated bound.

\smallskip
\emph{(ii)} Let $w_h^\star\in\arg\max_{w_h\in\Gamma_h}F_h(w_h)$ and let $w^\star\in\arg\max_{w\in\Gamma}F(w)$.
Along any subsequence with $w_h^\star\to\bar w\in\Gamma$, we have
$F(\bar w)\ \ge\ F(w_h^\star)-L_w\,|w_h^\star-\bar w|
\ \ge\ F_h(w_h^\star)-\varepsilon_h-L_w\,|w_h^\star-\bar w|$,
and since $F_h(w_h^\star)=\sup_{\Gamma_h}F_h$,
$F_h(w_h^\star)=\sup_{\Gamma_h}F_h
\ \ge\ \sup_{\Gamma}F-\varepsilon_h-L_w\,\Delta w$,
it follows that
\[
F(\bar w)\ \ge\ \sup_{\Gamma}F\;-\;2\varepsilon_h\;-\;L_w\,\Delta w\;-\;L_w\,|w_h^\star-\bar w|.
\]
Letting $h\to 0$ (so that $\varepsilon_h\to 0$, $\Delta w\to 0$, and $|w_h^\star-\bar w|\to 0$) gives $F(\bar w)\ge \sup_{\Gamma}F$.
Since trivially $F(\bar w)\le \sup_{\Gamma}F$, we conclude $F(\bar w)=\sup_{\Gamma}F$, hence $\bar w\in\arg\max_{\Gamma}F$.
If the maximizer of $F$ on $\Gamma$ is unique, then the entire sequence $w_h^\star$ converges to $w^\star$.
\end{proof}
We now state the main convergence result in the next theorem.
\begin{theorem}[Main convergence of the full scheme]
\label{thm:main_convergence}
Fix $x_0\in\Omega$ and choose $x_0^h\in\Omega^h$ with $x_0^h\to x_0$ as $h\to0$; under \eqref{eq:dis_parameter} and Assumption~\ref{ass:training_regime}, the numerical outer-optimized value satisfies
\[
V_h(x_0^h,t_0^-):=\max_{w_h\in\Gamma_h}V_h(x_0^h,w_h,t_0^-)\to \max_{w\in\Gamma}V(x_0,w,t_0^-)=:V(x_0,t_0^-),
\]
and any $w_h^\star\in\arg\max_{\Gamma_h}V_h(x_0^h,\cdot,t_0^-)$ has accumulation points in $\arg\max_{\Gamma}V(x_0,\cdot,t_0^-)$ (with $w_h^\star\to w^\star$ if the maximizer is unique).
\end{theorem}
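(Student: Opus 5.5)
The plan is to assemble the theorem from the two preceding convergence results, Lemma~\ref{lem:conv_inner} and Lemma~\ref{lem:outer_conv}, with only one additional ingredient: uniformity of the inner convergence in the threshold variable.

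\textbf{Step 1: inner problem.} For each fixed discrete threshold $w_h\in\Gamma_h$, Lemma~\ref{lem:conv_inner} at $m=0$, applied along $x_0^h\to x_0$, gives $V_h(x_0^h,w_h,t_0^-)-V(x_0,w_h,t_0^-)\to0$. This rests on stability (Lemma~\ref{lemma:stability}), monotonicity (Lemma~\ref{lemma:mon}) and consistency (Lemma~\ref{lemma:consistency}).

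\textbf{Step 2: uniformity in $w$.} We need $\varepsilon_h:=\sup_{w_h\in\Gamma_h}|F_h(w_h)-F(w_h)|\to0$.
\begin{itemize}
\item The stability constant in Lemma~\ref{lemma:stability} depends on $w$ only through $\|\Phi(\cdot,w)\|_\infty$, which is bounded uniformly for $w\in\Gamma=[0,w_{\max}]$ since $\Phi$ is Lipschitz in $w$ with constant $L_w$.
\item The consistency constants in Lemma~\ref{lemma:consistency} likewise do not depend on $w$.
\item Monotonicity, together with the fact that the scheme commutes with adding constants up to the kernel-mass defect in \eqref{eq:sum_ghat}, gives a discrete Lipschitz estimate in $w$: $|F_h(w_1)-F_h(w_2)|\le e^{M\varepsilon(h)}\max_{i,j}G(-ia_i;\Delta t)^M L_w|w_1-w_2|$.
\end{itemize}
Combining these, we argue by contradiction. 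Suppose $\varepsilon_h\not\to0$. Then there is a subsequence with $w_h\to\bar w\in\Gamma$ (by compactness of $\Gamma$) along which the error stays bounded away from zero. The half-relaxed limit argument of Lemma~\ref{lem:conv_inner}, run jointly in $(x,w)$, then yields a contradiction, using continuity of $F$ (it is $L_w$-Lipschitz, as shown in the proof of Lemma~\ref{lem:outer_conv}).

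This uniformity is the main obstacle. Lemma~\ref{lem:conv_inner} is stated for a fixed $w_h$, and the paper's proof of Lemma~\ref{lem:outer_conv} simply asserts it. I would make the contradiction argument above explicit. The key point is that the limiting problem at $\bar w$ has a unique solution by Remark~\ref{rm:uniqueness}, and that $V(x_0,w_h,t_0^-)\to V(x_0,\bar w,t_0^-)$.

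\textbf{Step 3: value convergence.} With $\varepsilon_h\to0$, Lemma~\ref{lem:outer_conv}(i) applies:
\[
\Bigl|\max_{\Gamma}F-\max_{\Gamma_h}F_h\Bigr|\le\varepsilon_h+L_w\Delta w\to0 .
\]
Concretely, the upper bound uses $\max_{\Gamma_h}F_h\le\max_{\Gamma_h}F+\varepsilon_h\le\max_\Gamma F+\varepsilon_h$. The lower bound takes a maximizer $w^\star$ of $F$ and its nearest grid node $w_h$, so that $\max_{\Gamma_h}F_h\ge F(w_h)-\varepsilon_h\ge F(w^\star)-L_w\Delta w-\varepsilon_h$.

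\textbf{Step 4: maximizers.} The maxima are attained because $\Gamma_h$ is finite and $F$ is continuous on the compact set $\Gamma$. Since $\Gamma$ is compact, accumulation points of $w_h^\star$ exist, and Lemma~\ref{lem:outer_conv}(ii) places each of them in $\arg\max_\Gamma F$. If the maximizer is unique, every subsequence has a further subsequence converging to $w^\star$, so the whole sequence converges to $w^\star$.
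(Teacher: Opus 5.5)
Your proposal is correct and follows the paper's route. The paper proves the theorem only by combining Lemma~\ref{lem:conv_inner} at $m=0$ with Lemma~\ref{lem:outer_conv}, and your Steps~3--4 are exactly the grid-approximation and accumulation-point arguments in the proof of Lemma~\ref{lem:outer_conv}. Your Step~2 supplies the uniformity $\sup_{w_h\in\Gamma_h}|F_h(w_h)-F(w_h)|\to 0$, which the paper merely asserts with an $\mathcal{O}(h)$ rate that the Barles--Souganidis argument does not give. Your $h$-uniform discrete Lipschitz bound in $w$ already suffices for that, since pointwise convergence at each fixed $w$ plus equicontinuity on the compact set $\Gamma$ gives uniform convergence, so the joint half-relaxed-limit argument is unnecessary.
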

This follows from Lemmas~\ref{lem:conv_inner}–\ref{lem:outer_conv}.

\section{Generality and extensions}
\label{sec:generality_extensions}
Our monotone integration scheme is built from three structural ingredients: (i) translation-invariant inter-decision increments specified via a closed-form CF, (ii) a Fourier-trained transition density $\widehat g\ge 0$, and (iii) nonnegative quadrature/interpolation weights. Thus, for each fixed \mbox{auxiliary parameter $w$,} the inner Bellman recursion retains the same monotonicity/stability/consistency properties established in Lemmas~\ref{lemma:stability}--\ref{lem:conv_inner}. In particular, for each fixed $w$, the analysis up to Lemma~\ref{lem:conv_inner} is agnostic to the specific choice of terminal functional $\Phi(\cdot,w)$, which enters only through the terminal condition and the boundary conditions.

Accordingly, our results apply to optimization problems of the form
\[
\sup_{w\in\mathcal W}\ \sup_{\mathcal U_0\in\mathcal A}
\Ebb_{\mathcal U_0}^{x_0,t_0^-}\!\big[\Phi(W_T,w)\big].
\]
The outer-maximization convergence argument {extends whenever $\mathcal W$ is compact (or can be localized to one) and $w\mapsto\Phi(\cdot,w)$ is uniformly continuous on $\mathcal W$: there exists a modulus of continuity $\omega$ with $\omega(r)\downarrow 0$ as $r\downarrow 0$
such that
\[
\sup_{x\in\Omega}\,|\Phi(W(x),w_1)-\Phi(W(x),w_2)|
\ \le\ \omega(|w_1-w_2|),
\qquad \forall\,w_1,w_2\in\mathcal W.
\]
In this case, the outer grid-approximation error is controlled by $\omega(\Delta w)$ (cf.\ Lemma~\ref{lem:outer_conv}).

In many applications, $\Phi(W_T,w)$ admits a natural reward--risk decomposition that highlights a broad class of tail and shortfall criteria encompassed by the same $\sup_w\Ebb[\Phi(\cdot,w)]$ structure. Specifically, consider terminal functionals of the form
\[
\Phi(W_T,w)\;=\;R(W_T)\;+\;\gamma\,\varphi(Z,w),
\qquad
Z=\zeta(W_T).
\]
Here, $R(W_T)$ is a reward term (e.g.\ $R(W_T)=W_T$), $\gamma>0$ is a scalarization parameter, and $Z$ is a real-valued variable obtained from $W_T$ via a prescribed measurable mapping $\zeta:\Rbb_+\to\Rbb$ (e.g.\ a loss/shortfall metric derived from the aggregate level $W_T$).
The risk component is encoded through the integrand $\varphi(Z,w)$, which in many cases admits an auxiliary-variable representation of the form
$\rho(Z)\;=\;\sup_{w\in\mathcal W}\ \Ebb\!\big[\varphi(Z,w)\big]$,
so that a terminal reward--risk criterion of the form
$\sup_{\mathcal U_0\in\mathcal A}\big\{\Ebb[R(W_T)]+\gamma\,\rho(Z)\big\}$
can be written equivalently as
$\sup_{w\in\mathcal W}\ \sup_{\mathcal U_0\in\mathcal A}\,
\Ebb_{\mathcal U_0}^{x_0,t_0^-}\!\big[\,R(W_T)\;+\;\gamma\,\varphi(Z,w)\big]$,
which matches the $\sup_w\Ebb[\Phi(\cdot,w)]$ structure treated here (with $\Phi(W_T,w)=R(W_T)+\gamma\,\varphi(Z,w)$).

Beyond CVaR/expected shortfall \cite{RT2000}, this class also includes buffered probability of exceedance \cite{bpoe2018,dang2026multi} and optimized certainty equivalents / shortfall-type convex risk measures
\cite{BenTalTeboulle2007}, which can be cast into the same auxiliary-variable $\sup_w\Ebb[\varphi(\cdot,w)]$ representation (possibly after taking a negative when
originally posed via an infimum over $w$).
More generally, piecewise-linear (kinked) terminal penalties in $Z$ (and hence in $W_T$ via $Z=\zeta(W_T)$) with breakpoints parameterized by $w$ also fit this setting \cite{BenTalTeboulle2007}.

\paragraph{Higher dimensions.}
Finally, the Fourier-trained nonnegative kernel construction, the nonnegative-weight quadrature discretization, and the Barles--Souganidis convergence framework extend to dimensions $d>2$ under the same structural assumptions, with computational scaling corresponding to $d$-dimensional quadrature/FFT-based convolutions.

\section{Numerical experiments}
\label{sec:num_experiments}
This section reports numerical experiments addressing two separate questions:
(i) the approximation accuracy of the Fourier-trained transition kernel $\widehat g$ when the one-step increment law is specified only through a closed-form CF; and
(ii) the performance of the resulting monotone 2D integration scheme when $\widehat g$ is used to {\myblue{solve}} a representative multi-period \mbox{mean--CVaR optimization problem.}
Subsection~\ref{sec:2d_kou} addresses (i) using a fully coupled 2D jump--diffusion test law with synthetic parameters.
Subsection~\ref{sec:DC} then {\myblue{presents}} a Defined Contribution (DC) mean--CVaR portfolio illustration for working years (accumulation), where the same increment law is calibrated to long-horizon market data.

\paragraph{Error metrics.}
Let $f_1,f_2\in L_p(\Rbb^2)$, $p\in\{1,2\}$. We report the truncated $L_p$ error
$L_p(f_1,f_2)=\int_{[-A,A]^2}\big|f_1(x)-f_2(x)\big|^p\,dx$,
for $A>0$ chosen sufficiently large so that truncation has negligible effect at the displayed
precision. We also report the maximum pointwise error
$\mathrm{MPE}(f_1,f_2)=\max_{1\le k\le K}|f_1(x_k)-f_2(x_k)|$ over a fixed set of evaluation
points $\{x_k\}_{k=1}^K$. Among these, the $L_2$ error is the primary metric, consistent with the $L_2$ analysis in Section~\ref{sec:fournet}.

\subsection{Kernel learning accuracy}
\label{sec:2d_kou}
\subsubsection{2D Kou jump--diffusion}
We take $G(\eta)$ to be the one-step CF of a fully coupled 2D Kou jump--diffusion model, written in the L\'{e}vy-exponent form $G(\eta)=\exp(\Delta t\,\Psi(\eta))$ \cite{ken1999levy,kou01}. The characteristic exponent $\Psi(\eta_s,\eta_b)$ is given by
\begin{align}
\Psi(\eta_s,\eta_b)
&= i\,(\eta_s \mu_s^*+\eta_b \mu_b^*)
   - \tfrac12\big(\sigma_s^2\eta_s^2+\sigma_b^2\eta_b^2+2\rho\sigma_s\sigma_b\,\eta_s\eta_b\big)
\notag\\[-0.5mm]
&\quad +\,\lambda^s\!\big(\varphi_s(\eta_s)-1\big)
        + \lambda^b\!\big(\varphi_b(\eta_b)-1\big)
        + \lambda^c\!\big(\varphi_{c}(\eta_s,\eta_b)-1\big),
\qquad |\rho|<1,
\label{eq:ce_kou}
\end{align}
with idiosyncratic jump CFs
\[
\varphi_s(\eta_s)=p_s\,\frac{\eta_1^s}{\eta_1^s-i\eta_s}
+(1-p_s)\,\frac{\eta_2^s}{\eta_2^s+i\eta_s},\qquad
\varphi_b(\eta_b)=p_b\,\frac{\eta_1^b}{\eta_1^b-i\eta_b}
+(1-p_b)\,\frac{\eta_2^b}{\eta_2^b+i\eta_b},
\]
and a common (co-)jump component
\[
\varphi_c(\eta_s,\eta_b)=\varphi_{c,s}(\eta_s)\,\varphi_{c,b}(\eta_b),\qquad
\begin{cases}
\displaystyle \varphi_{c,s}(\eta_s)=p_{c,s}\,\frac{\eta_{1,c,s}}{\eta_{1,c,s}-i\eta_s}
+(1-p_{c,s})\,\frac{\eta_{2,c,s}}{\eta_{2,c,s}+i\eta_s},\\[2.0ex]
\displaystyle \varphi_{c,b}(\eta_b)=p_{c,b}\,\frac{\eta_{1,c,b}}{\eta_{1,c,b}-i\eta_b}
+(1-p_{c,b})\,\frac{\eta_{2,c,b}}{\eta_{2,c,b}+i\eta_b}.
\end{cases}
\]
The compensated drifts are
\[
\mu_s^* = \mu^s - \tfrac{(\sigma^s)^2}{2} - \lambda^s \kappa^s - \lambda^c \kappa^c_s, \quad
\mu_b^* = \mu^b - \tfrac{(\sigma^b)^2}{2} - \lambda^b \kappa^b - \lambda^c \kappa^c_b,
\]
with compensators
\[
\kappa^z=\varphi_z(-i)-1,\quad z\in\{s,b\},\quad
\kappa^c_s=\varphi_c(-i,0)-1,\quad\kappa^c_b=\varphi_c(0,-i)-1.
\]
The synthetic parameters are chosen as follows:
$\mu^s=0.08$, $\sigma^s=0.03$, $\lambda^s=0.6$, $p^s_{\mathrm{up}}=0.4$, $\eta^s_1=6.5$, $\eta^s_2=6.5$, $\rho_{sb}=0.05$;
$\mu^b=0.02$, $\sigma^b=0.04$, $\lambda^b=0.8$, $p^b_{\mathrm{up}}=0.5$, $\eta^b_1=20.5$, $\eta^b_2=22.5$;
$\lambda^c=0.2$, $p^c_{\mathrm{up},s}=0.5$, $\eta^c_{1,s}=25$, $\eta^c_{2,s}=30$,
$p^c_{\mathrm{up},b}=0.6$, $\eta^c_{1,b}=20$, $\eta^c_{2,b}=35$.
For concreteness, we set $\Delta t=1$ in the training experiments.

\subsubsection{Training setup and results}
\label{ssc:train_results}
We follow the Fourier-domain sampling and truncation prescriptions in
Sections~\ref{subsec:fournet_loss_error}--\ref{subsec:fournet_error}.
We choose $\eta'$ so that the tail conditions in \eqref{eq:boundD} hold with tolerance $\varepsilon=10^{-6}$;
in all experiments we use a conservative value $\eta'=80$ per axis (increasing $\eta'$ further does not
change the reported results at the shown precision). Table~\ref{tab:hparams} summarizes the training
hyperparameters.

\begin{table}[!htbp]
\centering
\begin{tabular}{lcccccc}
\toprule
$N$ & $P$ & \# epochs$_1$ & \# epochs$_2$ & $\ell_1$ & $\ell_2$ & \texttt{batchsize} \\
\midrule
60 & $10^{6}$ & 20 & 100 & 0.04 & 0.00025 & 1024 \\
\bottomrule
\end{tabular}
\caption{Hyperparameters for Fourier-domain neural-network training.}
\label{tab:hparams}
\end{table}
We use a two-stage optimizer schedule as in \cite{du2025fourier}: an initial exploration phase (AMSGrad
\cite{tran2019convergence}) followed by a refinement phase (Adam \cite{kinga2015method}).
All runs were conducted in Python/TensorFlow; hardware details do not materially affect the reported accuracy metrics.
In this example, both $\mathrm{Re},G$ and $\mathrm{Im},G$ oscillate in the frequency variable $\eta$. These oscillations may be rapid over moderate intervals (creating many local minima and noisy gradients) or relatively smooth but spread over a wide domain (reducing sampling and training efficiency). To improve conditioning, we apply a simple affine rescaling of the CF targets as in Remark~5.1 of \cite{du2025fourier}; the learned kernel is then mapped back and reported in the original coordinates.

Figure~\ref{fig:2dkuo_3} shows representative slices of $G$ and $\widehat G$, together with a corresponding slice of $\widehat g$. Errors are reported in Table~\ref{tab:2dkou_errors}, where the principal $L_2$ metric is highlighted. In this illustrative run, \mbox{the fitted CF yields $L_2$ errors on the order of $10^{-7}$}\footnote{
We also tested bivariate Variance--Gamma, bivariate NIG, and 2D Merton specifications; the qualitative kernel-fit accuracy and control outcomes are similar and are omitted for brevity.}.

\begin{figure}[htbp]
  \centering
  \subfigure[$\mathrm{Re}_G$, $\mathrm{Re}_{\widehat{G}}$]{%
    \includegraphics[width=0.32\linewidth]{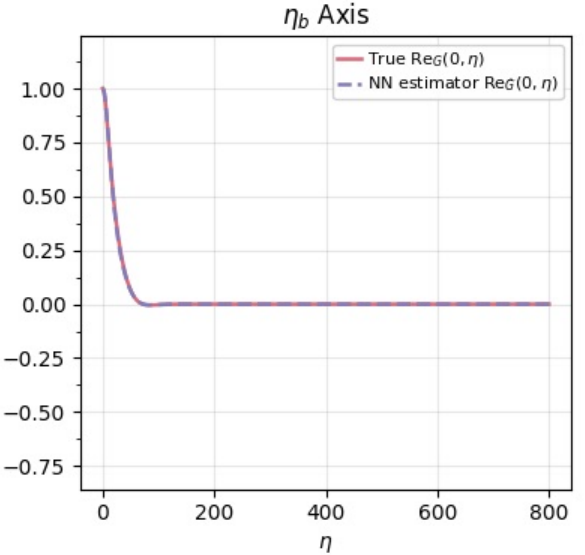}
    \label{fig:sub1}}%
  \hfill
  \subfigure[$\mathrm{Im}_G$, $\mathrm{Im}_{\widehat{G}}$]{%
    \includegraphics[width=0.32\linewidth]{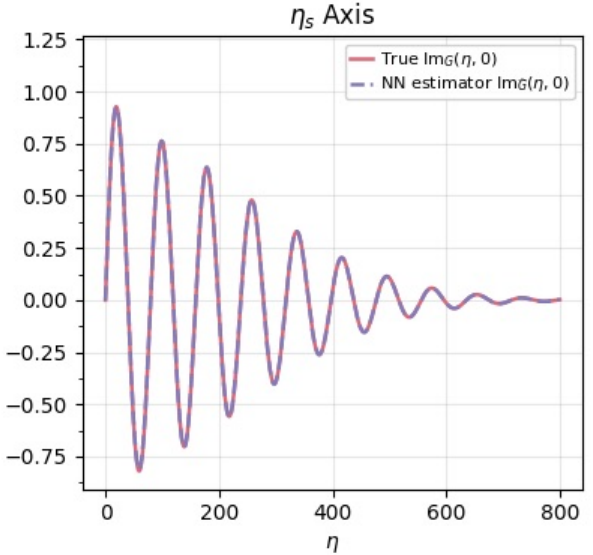}
    \label{fig:sub2}}%
  \hfill
  \subfigure[$\widehat{g}(\cdot;\Delta t)$]{%
    \includegraphics[width=0.32\linewidth]{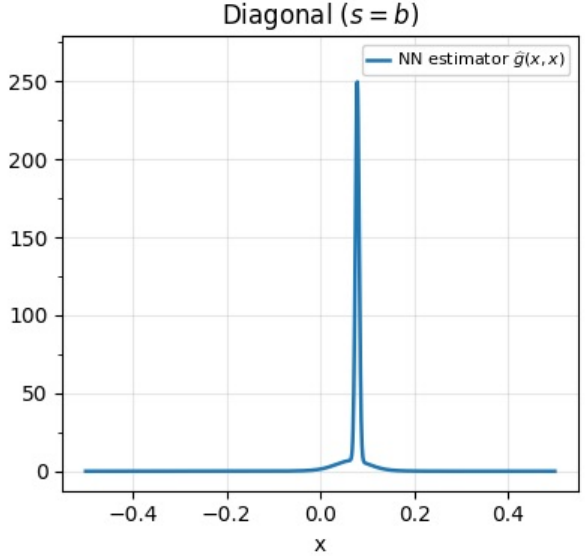}
    \label{fig:sub3}}
   \caption{2D Kou jump--diffusion test case. Panels (a)--(b) show representative 1D slices of the target CF $G$ and the fitted CF $\widehat G$, and panel (c) shows a representative slice of the trained density $\widehat g$. To improve conditioning for oscillatory $G$, the CF targets in the loss function are affinely rescaled as in Remark~5.1 of \cite{du2025fourier}; the plotted slices are shown in the original (unrescaled) coordinates.}
\label{fig:2dkuo_3}

\end{figure}

\begin{table}[htbp]
  \centering
  \renewcommand{\arraystretch}{1.3}
  \resizebox{\linewidth}{!}{%
  \begin{tabular}{c >{\columncolor{gray!20}}c c c >{\columncolor{gray!20}}c c}
    \toprule
     $L_1(\mathrm{Re}_G,\mathrm{Re}_{\widehat{G}})$ \,&
     $L_2(\mathrm{Re}_G,\mathrm{Re}_{\widehat{G}})$ \,&
     $\mathrm{MPE}(\mathrm{Re}_G,\mathrm{Re}_{\widehat{G}})$ \,&
     $L_1(\mathrm{Im}_G,\mathrm{Im}_{\widehat{G}})$ \,&
     $L_2(\mathrm{Im}_G,\mathrm{Im}_{\widehat{G}})$ \,&
     $\mathrm{MPE}(\mathrm{Im}_G,\mathrm{Im}_{\widehat{G}})$ \\
    \midrule
     1.1369e$-$05 \,& 9.2574e$-$07 \,& 3.6886e$-$04 \,&
     1.0682e$-$05 \,& 7.4543e$-$08 \,& 2.8006e$-$04 \\
    \bottomrule
  \end{tabular}
  }
  \caption{CF estimation errors for the 2D Kou jump--diffusion test case.}
  \label{tab:2dkou_errors}
\end{table}

\subsection{Mean–CVaR optimization (DC plan)}
\label{sec:DC}
We now specialize the generic 2-D mean--CVaR control problem to a DC accumulation illustration. At each intervention time, as is common in accumulation plans (see, e.g.\ \cite{dang2026multi, forsyth2020multiperiod}), the aggregate account value is allocated between the two assets with no borrowing and no negative positions, so the balance invested in each asset remains nonnegative. In this setting, the state $X_t=(S_t,B_t)$ represents the log-balances invested in an equity index (component $s$) and a bond index (component $b$), respectively.
The corresponding aggregate account value is $W_t=e^{S_t}+e^{B_t}$, so the terminal outcome $W_T$ in the mean--CVaR objective is the total retirement balance at horizon $T$.

Between decision times, the increment $\Delta X=((\Delta S)_m,(\Delta B)_m)$ represents the joint log-return over one period. At each intervention time $t_m$, the account receives a deterministic (salary) contribution $q_m$, and the control $u_m\in[0,1]$ reallocates the post-contribution wealth across the two components as in \eqref{eq:cash}--\eqref{eq:state-update}.

\subsubsection{Calibration summary}
To provide a concrete mean--CVaR control illustration with empirically realistic dynamics, we calibrate
the 2D Kou model of Subsection~\ref{sec:2d_kou} to long-horizon U.S.\ market data.
Specifically, we use monthly total return series from the Center for Research in Security Prices (CRSP)
over 1926:01--2024:12%
\footnote{The results presented here were calculated based on data from Historical Indexes,
\textcopyright{} 2015 Center for Research in Security Prices (CRSP),
The University of Chicago Booth School of Business.}.
In the DC illustration below, the two components correspond to {\myblue{equity}} and {\myblue{bond}} log-returns, respectively.
The resulting annualized parameter estimates are reported in Table~\ref{tab:double_exp_jump_b_c}. Dependence
is introduced via both the diffusion correlation and the common-jump component.

\begin{table}[htbp]
\centering
\resizebox{\linewidth}{!}{%
\begin{tabular}{lccccccc}
\toprule
Method & $\mu^s$ & $\sigma^s$ & $\lambda^s$ & $p^s_{\mathrm{up}}$ & $\eta^s_1$ & $\eta^s_2$ & $\rho_{sb}$ \\
\midrule
\multicolumn{8}{c}{{Component $s$ (Idiosyncratic; equity index)}} \\
 & 0.0898 & 0.1326 & 0.5960 & 0.373 & 6.701 & 6.634 & (see below) \\
\midrule
Method & $\mu^b$ & $\sigma^b$ & $\lambda^b$ & $p^b_{\mathrm{up}}$ & $\eta^b_1$ & $\eta^b_2$ & $\rho_{sb}$ \\
\midrule
\multicolumn{8}{c}{{Component $b$ (Idiosyncratic; bond index)}} \\
 & 0.0204 & 0.0466 & 0.9495 & 0.479 & 20.764 & 22.551 & 0.0721 \\
\midrule
\multicolumn{8}{c}{{Common Jump}} \\
\midrule
Method & $\lambda^c$ & $p^c_{\mathrm{up},s}$& $\eta^c_{1,s}$ & $\eta^c_{2,s}$ & $p^c_{\mathrm{up},b}$ & $\eta^c_{1,b}$ & $\eta^c_{2,b}$ \\
\midrule
& 0.1010 & 0.300 & 9.825 & 7.146 & 0.500 & 16.982 & 19.914 \\
\bottomrule
\end{tabular}
}
\caption{Estimated annualized parameters for the 2D Kou model  with idiosyncratic and common jumps, calibrated from CRSP equity index and 10-year Treasury total returns; monthly, 1926:01--2024:12. Threshold technique from \cite{dang2016better}.
}
\label{tab:double_exp_jump_b_c}
\end{table}

\subsubsection{Accumulation scenario}
\label{ssc:invest_scenario}
We now illustrate the full pipeline (kernel-learning + strictly monotone 2D integration) in a
multi-period mean--CVaR control problem with discrete interventions, using a defined DC retirement-accumulation setting. This DC example is used only as an application-level illustration (with interpretable interventions and
controls); the kernel-learning construction and convergence analysis are not specific to finance.

To illustrate the accumulation phase of a DC plan, we consider a 35-year-old investor with an annual
salary of \$100{,}000. The total contribution to the plan account is $20\%$ of salary each year.
The investor plans to retire at age 65, yielding a 30-year savings horizon \cite{forsyth2020multiperiod}.
The scenario is summarized in Table~\ref{tab:input-data}.
In this illustration we set $\Delta t=1$ year and use the calibrated parameters in
Table~\ref{tab:double_exp_jump_b_c}. We train $\widehat g$ using the same setup as in
Subsection~\ref{ssc:train_results}; the resulting CF-fit errors are of comparable magnitude to
Table~\ref{tab:2dkou_errors}, so we omit repeated slices/metrics for brevity.

\begin{table}[htbp]
  \centering
  \begin{tabular}{@{} l l @{}}
    \toprule
    Investment horizon (years) & 30 \\
    Initial investment $W_0$   & 0 \\
    Cash contributions         & \$20,000/year \\
    Rebalancing frequency      & yearly \\
    \bottomrule
  \end{tabular}
  \caption{DC accumulation illustration. Cash contributions are made at $t_m=0,1,\ldots,29$ years. Model parameters are given in Table~\ref{tab:double_exp_jump_b_c}.}
  \label{tab:input-data}
\end{table}

\subsubsection{Convergence results and efficient frontier}
We choose a computational domain large enough that boundary truncation effects are negligible
\cite{zhang2024monotone,DangForsyth2014}. For the calibrated parameters (Table~\ref{tab:double_exp_jump_b_c})
and the DC accumulation scenario (Table~\ref{tab:input-data}), the baseline domains are listed in
Table~\ref{tab:step01}. The padded bounds $(s_{\min}^\dagger,s_{\max}^\dagger)$ and
$(b_{\min}^\dagger,b_{\max}^\dagger)$ are set according to \eqref{eq:pad_bounds}.
Unless otherwise stated, refinement levels and discretization parameters are given in
Table~\ref{tab:disc_grid}.

\begin{table}[htbp]
\centering
\begin{minipage}[t]{0.4\linewidth}
\strut\vspace*{-\baselineskip}\newline
\flushleft
  \begin{tabular}{c|c}
    \hline
    $s_{\mymin}$ & $s_{\mymax}$ \\
    \hline
    $\ln(10^5)-8$ & $\ln(10^5)+8$ \\
    \hline  \hline
    $b_{\mymin}$ & $b_{\mymax}$ \\
    \hline
    $\ln(10^5)-8$ & $\ln(10^5)+8$ \\
    \hline  \hline
    \multicolumn{2}{c}{$w_{\mymax}$} \\
    \hline
    \multicolumn{2}{c}{$1\times 10^8$} \\
    \hline     \hline
  \end{tabular}
    \captionof{table}{Computational domains of numerical experiments.}
      \label{tab:step01}
\end{minipage}
\hfill
\begin{minipage}[t]{0.50\linewidth}
\strut\vspace*{-\baselineskip}\newline
\flushright

  \begin{tabular}{c c c c c}
    \toprule
    \multicolumn{1}{c}{Refine.\ level} &
    \multicolumn{1}{c}{$N_s$} &
    \multicolumn{1}{c}{$N_b$} &
    \multicolumn{1}{c}{$N_u$} &
    \multicolumn{1}{c}{$N_w$} \\
    \midrule
0 & 512  & 512  & 256 & 512 \\
    1 & 1024 & 1024 & 512 & 1024 \\
        2 & 2048 & 2048 & 1024 & 2048 \\
    \bottomrule
  \end{tabular}
    \captionof{table}{Discretization parameters and grid refinement levels.}
      \label{tab:disc_grid}
\end{minipage}
\end{table}
Table~\ref{tab:convergence_premcommitment} reports a convergence check for the
mean--CVaR optimization problem with $\gamma=10$ and $\alpha=0.05$.
The differences between the two finest refinement levels suggest that the computed objective is accurate to well within 1\%. The optimal controls are computed and stored, and then used as inputs to Monte Carlo simulation ($2.56\times 10^6$ simulations) for validation (reported in the same table).
\begin{table}[htpb]
\centering
\resizebox{\linewidth}{!}{%
\begin{tabular}{lccc|ccc}
\toprule
Refine. & \multicolumn{3}{c}{Our method}
& \multicolumn{3}{c}{Monte Carlo} \\
\cmidrule(lr){2-4} \cmidrule(lr){5-7}
level
& $\mathbb{E}[W_T]$ & CVaR (5\%) & $w^\ast$
& $\mathbb{E}[W_T]$ & CVaR (5\%) & Median[$W_T$] \\
\midrule
$0$
& 2765.81 & 624.08 & 733.86
& 2760.64 (6.1) & 624.50 & 1481.44 \\
$1$
& 2768.71 & 624.37 & 734.12
& 2768.12 (6.1) & 624.37 & 1481.34 \\
$2$
& 2769.90 & 624.42 & 734.25
& 2770.18 (6.1) & 624.41 & 1481.23 \\
\bottomrule
\end{tabular}%
}
\caption{Convergence test, mean--CVaR with $\gamma=10$ and $\alpha=0.05$. Parameters in Table~\ref{tab:double_exp_jump_b_c}.
Brackets show half-widths of 99\% confidence intervals.
Units: thousands of currency units.}
\label{tab:convergence_premcommitment}
\end{table}

\begin{figure}[htbp]
  \centering
  \begin{minipage}[t]{0.5\linewidth}
    \centering
    \includegraphics[width=0.9\linewidth]{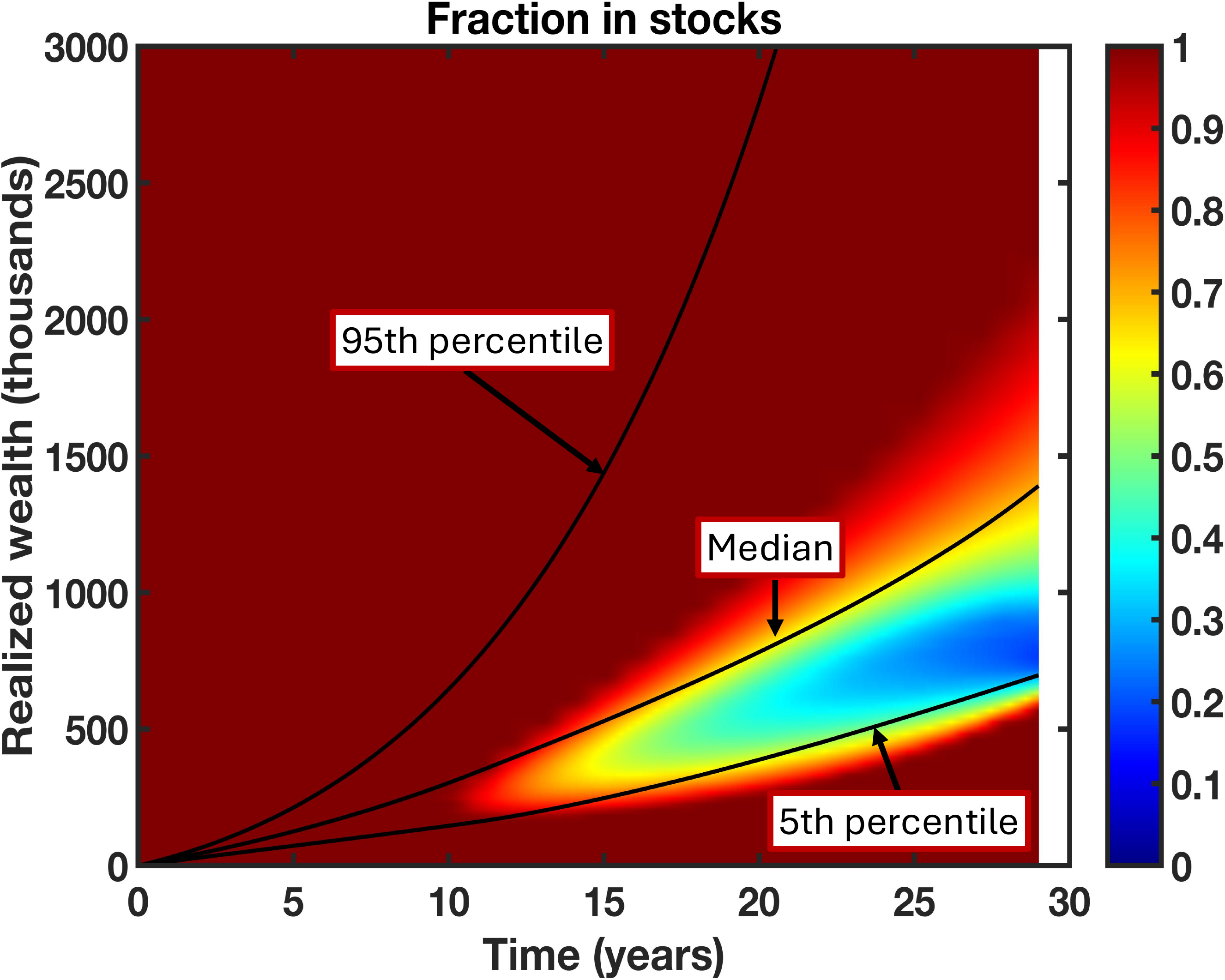}
    \caption{Pre-commitment mean--CVaR optimal control heat map (allocation weight in component $s$; interpreted as equity in the DC illustration).}
    \label{fig:hm_oc}
  \end{minipage}
  \hfill
  \begin{minipage}[t]{0.45\linewidth}
    \centering
    \includegraphics[width=0.9\linewidth]{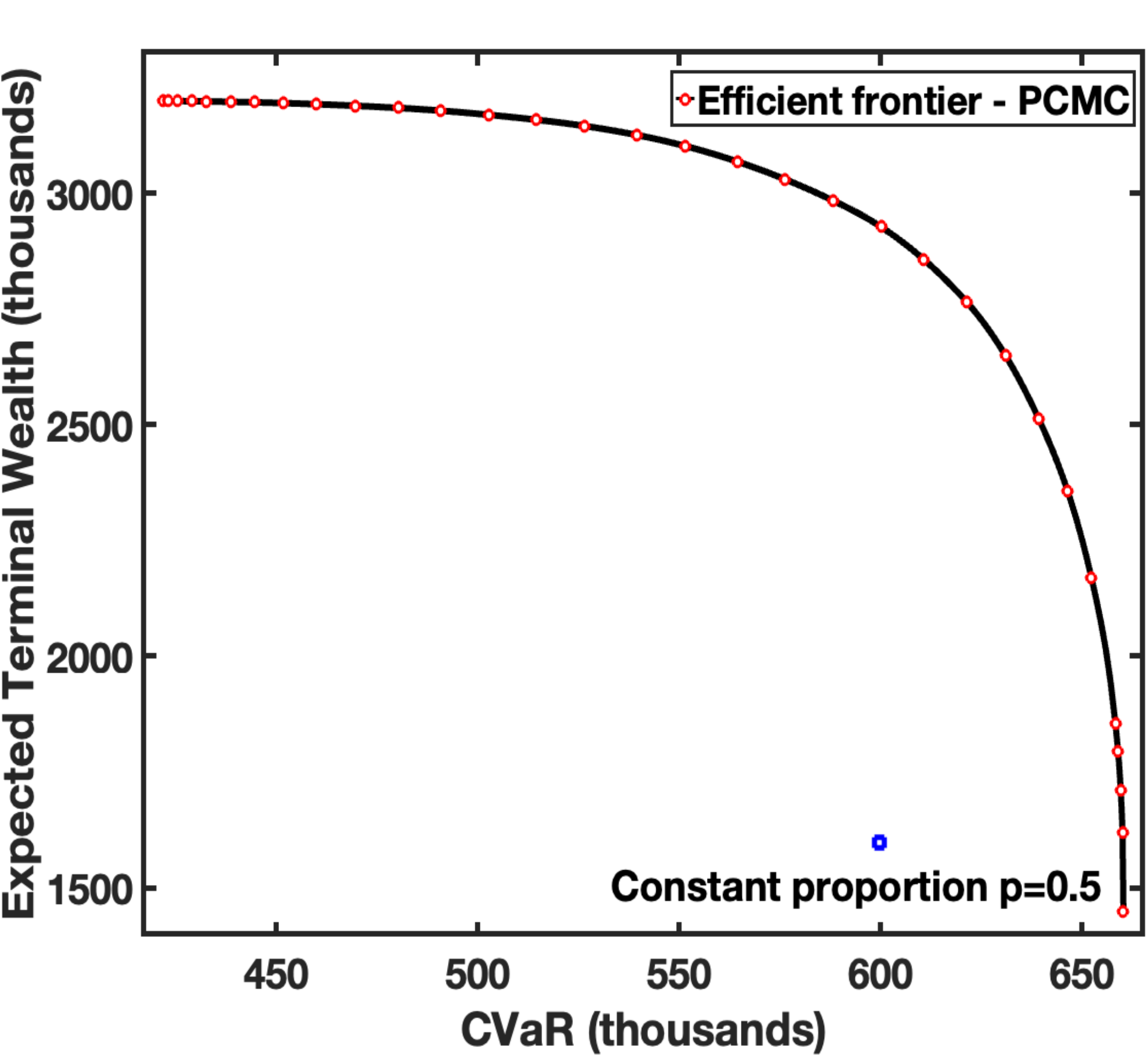}
    \caption{Efficient frontier of mean--CVaR with $\alpha=0.05$, computed on the finest refinement level (DC illustration).}
    \label{fig:ef}
  \end{minipage}
\end{figure}
Figure~\ref{fig:hm_oc} shows the optimal allocation weight in component $s$ as a function of time and
realized wealth. Early in the horizon the policy is near full allocation to component $s$; as time advances,
allocations become more state-dependent, with a clear de-risking region appearing around the wealth range
associated with the optimal threshold $w^\ast\approx 0.73$ million (Table~\ref{tab:convergence_premcommitment}).
For wealth well below this level, the policy increases exposure to component $s$ to improve tail outcomes; for
wealth near the threshold, the policy becomes more conservative; and for sufficiently high wealth, the CVaR
term is less binding and the allocation again tilts toward \mbox{higher expected growth.}

\medskip
\noindent To construct the efficient frontier, we vary the scalarization parameter $\gamma\in[0.01,1000]$ and repeat
the optimization at each $\gamma$ (all results on the finest refinement level). Figure~\ref{fig:ef} plots the
resulting frontier in the $(\mathrm{CVaR}_\alpha(W_T),\,\mathbb{E}[W_T])$ plane with \mbox{$\alpha=0.05$.} Since CVaR is applied to terminal wealth in this illustration, larger $\mathrm{CVaR}_\alpha(W_T)$ corresponds to better
downside performance. Increasing $\gamma$ shifts emphasis toward improving tail outcomes (higher CVaR) at the expense of expected terminal wealth.

\subsubsection{Robustness checks}
\label{ssc:spatial_size}
\paragraph{Impact of spatial domain sizes}
To validate that the baseline domains in Table~\ref{tab:step01} are sufficiently large, we repeat the experiment reported in Table~\ref{tab:convergence_premcommitment} on (i) a moderately larger domain and (ii) a smaller domain,
adjusting $N_s,N_b$ to keep $\Delta s,\Delta b$ unchanged. Tables~\ref{tab:impact_sds_l}--\ref{tab:impact_sds_s}
show that enlarging the domain has negligible impact at the reported precision, while shrinking the domain
introduces small discrepancies (as expected from truncation effects).

\begin{table}[htpb]
\centering
\begin{tabular}{lccc|ccc}
\toprule
& \multicolumn{3}{c}{Larger domain}
& \multicolumn{3}{c}{Table~\ref{tab:convergence_premcommitment}} \\
\cmidrule(lr){2-4} \cmidrule(lr){5-7}
Level
& $\mathbb{E}[W_T]$ & CVaR (5\%) & $w^\ast$
& $\mathbb{E}[W_T]$ & CVaR (5\%) & $w^\ast$ \\
\midrule
$0$
& 2765.80 & 624.08 & 733.86
& 2765.81 & 624.08 & 733.86 \\
$1$
& 2768.71 & 624.37 & 734.12
& 2768.71 & 624.37 & 734.12  \\
$2$
& 2769.90 & 624.42 & 734.25
& 2769.90 & 624.42 & 734.25  \\
\bottomrule
\end{tabular}%
\caption{Results using a larger spatial domain: $z_{\min}=\ln(10^5)-10$, $z_{\max}=\ln(10^5)+10$ (baseline domain $z_{\min}=\ln(10^5)-8$, $z_{\max}=\ln(10^5)+8$, Table~\ref{tab:step01}) for $z\in\{s,b\}$.}
\label{tab:impact_sds_l}
\end{table}

\begin{table}[htpb]
\centering
\begin{tabular}{lccc|ccc}
\toprule
& \multicolumn{3}{c}{Smaller domain}
& \multicolumn{3}{c}{Table~\ref{tab:convergence_premcommitment}} \\
\cmidrule(lr){2-4} \cmidrule(lr){5-7}
Level
& $\mathbb{E}[W_T]$ & CVaR (5\%) & $w^\ast$
& $\mathbb{E}[W_T]$ & CVaR (5\%) & $w^\ast$\\
\midrule
$0$
& 2765.01 & 624.10 & 733.86
& 2765.81 & 624.08 & 733.86 \\
$1$
& 2767.96 & 624.39 & 734.12
& 2768.71 & 624.37 & 734.12 \\
$2$
& 2769.19 & 624.45 & 734.25
& 2769.90 & 624.42 & 734.25 \\
\bottomrule
\end{tabular}%
\caption{Results using a smaller spatial domain: $z_{\min}=\ln(10^5)-6.25$, $z_{\max}=\ln(10^5)+6.25$ for $z\in\{s,b\}$; (baseline domain from Table~\ref{tab:step01}).}
\label{tab:impact_sds_s}
\end{table}

\paragraph{Impact of boundary conditions}
We repeat the experiment in Table~\ref{tab:convergence_premcommitment} using the constant boundary conditions proposed in \cite{zhou2025numerical,dang2025monotone}.  Table~\ref{tab:constant_bc} shows that, on the baseline domain, the constant boundary results are close to, but not fully identical with, those obtained using our asymptotic boundary conditions \eqref{eq:bd_operator}. As a robustness
check, Table~\ref{tab:constant_bc_l} repeats the constant boundary experiment on a larger domain, which reduces the discrepancy, consistent with constant boundary conditions requiring larger domains to reach comparable accuracy.
\begin{table}[htpb]
\centering
\begin{tabular}{lccc|ccc}
\toprule
& \multicolumn{3}{c}{Constant boundary}
& \multicolumn{3}{c}{Table~\ref{tab:convergence_premcommitment}} \\
\cmidrule(lr){2-4} \cmidrule(lr){5-7}
Level
& $\mathbb{E}[W_T]$ & CVaR (5\%) & $w^\ast$
& $\mathbb{E}[W_T]$ & CVaR (5\%) & $w^\ast$ \\
\midrule
$0$
& 2765.77 & 624.07 & 733.86
& 2765.81 & 624.08 & 733.86 \\
$1$
& 2768.64&  624.38 & 734.12
& 2768.71 & 624.37 & 734.12  \\
$2$
& 2769.76 & 624.42  & 734.25
& 2769.90 & 624.42 & 734.25  \\
\bottomrule
\end{tabular}%
\caption{Constant boundary conditions on the baseline domain, compared with our asymptotic boundary conditions \eqref{eq:bd_operator}.}
\label{tab:constant_bc}
\end{table}

\begin{table}[htpb]
\centering
\begin{tabular}{lccc|ccc}
\toprule
& \multicolumn{3}{c}{Constant boundary}
& \multicolumn{3}{c}{Tab.~\ref{tab:convergence_premcommitment}} \\
\cmidrule(lr){2-4} \cmidrule(lr){5-7}
Level
& $\mathbb{E}[W_T]$ & CVaR (5\%) & $W^\ast$
& $\mathbb{E}[W_T]$ & CVaR (5\%) & $W^\ast$ \\
\midrule
$0$
& 2765.80 & 624.08 & 733.86
& 2765.81 & 624.08 & 733.86 \\
$1$
& 2768.71 & 624.37 & 734.12
& 2768.71 & 624.37 & 734.12  \\
$2$
& 2769.90 & 624.42 & 734.25
& 2769.90 & 624.42 & 734.25  \\
\bottomrule
\end{tabular}%
\caption{Constant boundary conditions on a larger domain, compared with our asymptotic boundary conditions \eqref{eq:bd_operator}.}
\label{tab:constant_bc_l}
\end{table}

\section{Conclusion}
We developed a strictly monotone 2D integration scheme for multi-period mean--CVaR optimization, a representative class of reward--risk stochastic control problems with discrete interventions, in settings where the between-intervention increment law is specified via a closed-form CF with mild Fourier-tail decay.

The key computational ingredient is a Fourier-trained, nonnegative Gaussian-mixture transition kernel, which enables direct composite-quadrature evaluation of the Bellman convolution and an efficient FFT implementation. Our error analysis is conducted in Fourier space, leveraging the explicit Fourier form of both the Gaussian-mixture kernel and the target CF: we derive Fourier-domain $L_2$ error estimates and translate them into real-space bounds, which are then used to establish $\ell_\infty$ stability, consistency, and pointwise convergence as the discretization and kernel-approximation parameters vanish.

Numerical experiments include (i) a fully coupled 2-D jump--diffusion test law to assess kernel-learning accuracy, and (ii) a DC mean--CVaR optimization example representing accumulation over working years, calibrated to long-horizon data.
Together, they demonstrate practical accuracy and robustness. More broadly, the Fourier-trained kernel construction and the Fourier-to-real-space convergence framework apply to other reward--risk stochastic control problems with translation-invariant kernels, and can be extended beyond two dimensions, subject to the standard computational considerations.
Future work includes time-consistent optimization formulations and adaptive Fourier training strategies that preserve strict monotonicity.

\appendix
\section{Proof of Lemma~\ref{lem:CVaR_threshold_existence}}
\label{app:CVaR_threshold_existence}

For each $w\ge 0$, define $F(w):=\ds\sup_{\mathcal{U}_0 \in \mathcal{A}}
\Ebb_{\mathcal{U}_0}^{x_0,\,t_{0 }^{-}}
\big[ W_T + \gamma\big(w+\tfrac{1}{\alpha} \min \left(W_T-w, 0\right)\big)\big]$.
By Remark~\ref{rmk:finiteness_W},
$K:=\sup_{\mathcal{U}_0\in\mathcal A}\Ebb_{\mathcal{U}_0}^{x_0,t_0^-}[W_T]<\infty$, hence $F(w)$ is finite for each $w\ge 0$.
Next, use $\min(W_T-w,0)\le W_T-w$ to obtain, for any admissible $\mathcal U_0$,
\[
W_T+\gamma\Big(w+\tfrac1\alpha \min(W_T-w,0)\Big)
\;\le\;
\Big(1+\tfrac{\gamma}{\alpha}\Big)W_T \;+\;\gamma\Big(1-\tfrac1\alpha\Big)w.
\]
Taking expectations and then $\ds\sup_{\mathcal U_0\in\mathcal A}$ yields the uniform bound
\[
F(w)\;\le\;\Big(1+\tfrac{\gamma}{\alpha}\Big)K \;+\;\gamma\Big(1-\tfrac1\alpha\Big)w.
\]
Since $1-\tfrac1\alpha<0$, the right-hand side tends to $-\infty$ as $w\to\infty$, hence $F(w)\to -\infty$ as $w\to\infty$.
Finally, for each fixed $\mathcal U_0$, the map
$w\mapsto w+\tfrac1\alpha\min(W_T-w,0)$ is Lipschitz in $w$ (uniformly in $W_T$), so
$w\mapsto \Ebb_{\mathcal U_0}^{x_0,t_0^-}[\cdot]$ is continuous; therefore $F(\cdot)=\sup_{\mathcal U_0}(\cdot)$ is Lipschitz and in particular continuous.
Thus, there exists $\overline w<\infty$ such that $\sup_{w\ge 0}F(w)=\max_{w\in[0,\overline w]}F(w)$, and by continuity the maximum is attained at some $w^*(x_0)\in[0,\overline w]\subset[0,\infty)$.

\section{Proof of Lemma~\ref{lem:fournet_L2_bound}}
\label{app:fournet_L2_bound}
Let $\Delta(\eta):=G(\eta)-\widehat G(\eta;\widehat\theta^\star)$.

\noindent\emph{(i) $L_2$ bound.}
By \eqref{eq:Fourier_L2_isometry} applied to $f=g-\widehat g$,
\[
\int_{\Rbb^2}|g-\widehat g|^2
=\frac{1}{(2\pi)^2}\int_{\Rbb^2}|\Delta(\eta)|^2\,d\eta.
\]
Split $\Rbb^2=D_\eta\cup(\Rbb^2\setminus D_\eta)$. On $\Rbb^2\setminus D_\eta$,
$|\Delta|^2\le 2(|G|^2+|\widehat G|^2)$, so \eqref{eq:boundD} yields
$\int_{\Rbb^2\setminus D_\eta}|\Delta|^2<4\varepsilon_1$.
On $D_\eta$, approximate $\int_{D_\eta}|\Delta|^2$ by a composite left-hand quadrature
$\sum_{p=1}^P w_p |\Delta(\eta_p)|^2$. Under \eqref{eq:delta_eta},
the weights satisfy $0<w_p\le C_1/P$ and the quadrature remainder is bounded by
$\frac{C' C_1^3}{2P^{1/2}}$ (using \eqref{eq:grad}). Hence
\[
\int_{D_\eta}|\Delta|^2\,d\eta
\le C_1\,\frac{1}{P}\sum_{p=1}^P|\Delta(\eta_p)|^2 + \frac{C' C_1^3}{2P^{1/2}}
\le C_1\,\mathrm{Loss}_P(\widehat\theta^\star)+\frac{C' C_1^3}{2P^{1/2}},
\]
and since $\mathrm{Loss}_P(\widehat\theta^\star)<\varepsilon_2$, this gives \eqref{eq:fournet_L2_bound}.

\noindent\emph{(ii) Pointwise bound.}
From the inverse transform \eqref{eq:FT_pair} and $|a+ib|\le |a|+|b|$,
\[
|g(y)-\widehat g(y;\widehat\theta^\star)|
\le \frac{1}{(2\pi)^2}\int_{\Rbb^2}\big(|\mathrm{Re}_\Delta(\eta)|+|\mathrm{Im}_\Delta(\eta)|\big)\,d\eta.
\]
Split $\Rbb^2=D_\eta\cup(\Rbb^2\setminus D_\eta)$. On the tail,
\[
\int_{\Rbb^2\setminus D_\eta}\big(|\mathrm{Re}_\Delta|+|\mathrm{Im}_\Delta|\big)\,d\eta
\le \int_{\Rbb^2\setminus D_\eta}(|\mathrm{Re}_G|+|\mathrm{Im}_G|) \,d\eta
+\int_{\Rbb^2\setminus D_\eta}(|\mathrm{Re}_{\widehat G}|+|\mathrm{Im}_{\widehat G}|)
\le 2\varepsilon_1 \,d\eta.
\]
On $D_\eta$, applying the same quadrature bound with weights $w_p\le C_1/P$ gives
\[
\int_{D_\eta}\big(|\mathrm{Re}_\Delta|+|\mathrm{Im}_\Delta|\big)\,d\eta
\le \sum_{p=1}^P w_p\big(|\mathrm{Re}_\Delta(\eta_p)|+|\mathrm{Im}_\Delta(\eta_p)|\big)
+\frac{C' C_1^3}{2P^{1/2}}
\le C_1 R_P(\widehat\theta^\star)+\frac{C' C_1^3}{2P^{1/2}}.
\]
Since $R_P(\widehat\theta^\star)<\varepsilon_3$, \eqref{eq:Linfty_and_nonneg} follows.

\section{Proof of Lemma~\ref{lem:fourier_truncation_tails}}
\label{app:fourier_tails}
Let $\lambda:=c_0\,\Delta t>0$ and fix $\eta'\ge R$. For any $\eta\in\Rbb^2$ with
$\|\eta\|_\infty>\eta'$ we have $|\eta|>\eta'$. Under Assumption~\ref{ass:modelling}\,(A4),
\[
|G(\eta;\Delta t)|^p
\;\le\; \exp\!\big(-p\,\lambda\,|\eta|^\alpha\big),
\qquad p\in\{1,2\}.
\]
Hence, for $p\in\{1,2\}$, noting $\Rbb^2\setminus[-\eta',\eta']^2 = \{\eta \in \Rbb^2 | \|\eta\|_\infty>\eta'\}$,
\[
\int_{\{\|\eta\|_\infty>\eta'\}} |G(\eta;\Delta t)|^p\,d\eta
\;\le\; \int_{\{|\eta|>\eta'\}} e^{-p\lambda |\eta|^\alpha}\,d\eta
\;=\; 2\pi \int_{\eta'}^\infty r\,e^{-p\lambda r^\alpha}\,dr.
\]
Make the substitution $u=p\lambda r^\alpha$; then $r=(u/(p\lambda))^{1/\alpha}$ and
$dr=\frac{1}{\alpha}(p\lambda)^{-1/\alpha}u^{\frac{1}{\alpha}-1}\,du$,
\[
2\pi \int_{\eta'}^\infty r\,e^{-p\lambda r^\alpha}\,dr
\;=\; \frac{2\pi}{\alpha}(p\lambda)^{-2/\alpha}
\;\Gamma\!\Big(\frac{2}{\alpha},\,p\lambda\,\eta'^{\alpha}\Big),
\]
where $\Gamma(a,z)$ is the upper incomplete gamma function.
It is standard that for any $a>0$ and any $\theta\in(0,1)$ there exists
$C(a,\theta)>0$ such that
$\Gamma(a,z)\le C(a,\theta)\,e^{-\theta z}$ for all $z\ge 0$ \cite{AbramowitzStegun1972}.
Taking $\theta=\tfrac12$ and $z=p\lambda\,\eta'^\alpha$ in this bound, noting $\lambda = c_0\,\Delta t$, yields
\[
\int_{\{\|\eta\|_\infty>\eta'\}} |G(\eta;\Delta t)|^p\,d\eta
\;\le\;
C_G^{(p)} \exp\!\big(-c_G^{(p)}\,\eta'^\alpha\big),
\]
for constants
$ C_G^{(p)} = C(a,\theta)\,\frac{2\pi}{\alpha}\,\big(p\,c_0\,\Delta t\big)^{-2/\alpha}$ and $c_G^{(p)} \;:=\; \tfrac12\,p\,c_0\,\Delta t > 0$, independent of $\eta'\ge R$. This concludes
the Fourier domain truncation error bound related to~$G$.

\smallskip
\noindent
Regarding $\widehat G$, for $\eta=(\eta_s,\eta_b)\in\mathbb{R}^2$,
set $r:=\sigma^s_n\eta_s$ and $q:=\sigma^b_n\eta_b$. We then have
\[
\eta^\top\Sigma_n\eta
= r^2 + q^2 + 2\rho_n rq
\;\ge\; r^2+q^2 - 2|\rho_n||rq|
\;\ge\; (1-|\rho_n|)\,(r^2+q^2)
\;\ge\; (1-\overline\rho)\,\sigma_{\min}^2\,|\eta|^2.
\]
Hence,
$
|\widehat G(\eta;\theta)|
\;\le\;\sum_{n=1}^N|\beta_n|\,e^{-\frac12\,\eta^\top\Sigma_n\eta}
\;\le\; N\overline\beta\;\exp\!\Big(-\frac{(1-\overline\rho)\sigma_{\min}^2}{2}\,|\eta|^2\Big)$.

Let $\sigma^2:= (1-\overline\rho)\sigma_{\min}^2$. For $p\in\{1,2\}$,
\[
\int_{\{\|\eta\|_\infty>\eta'\}} |\widehat G(\eta;\theta)|^p\,d\eta
\;\le\; (N\overline\beta)^p \int_{\{|\eta|>\eta'\}} e^{-\frac{p\sigma^2}{2}\,|\eta|^2}\,d\eta
\;=\; (N\overline\beta)^p\,\frac{2\pi}{p\sigma^2}\,e^{-\frac{p\sigma^2}{2}\,\eta'^2}.
\]
Choosing $c_{\widehat G}^{(p)}\;:=\;\frac{p\;\sigma^2}{2}\;=\;\frac{p\;(1-\overline\rho)\sigma_{\min}^2}{2}$
and $C_{\widehat G}^{(p)}\;:=\;(N\overline\beta)^p\,\frac{2\pi}{p\sigma^2}$
concludes the proof.

\section{Proof of Corollary~\ref{cor:fournet_rates}}
\label{app:fournet_rates}
Choose $\eta'(h)$ so that the tail terms in Lemma~\ref{lem:fournet_L2_bound} satisfy
$\varepsilon_1\le c_1 h^{1+\kappa}$ (Assumption~\ref{ass:training_regime}).
Assume $\varepsilon_2\le c_2 h^{1+\kappa}$, $\varepsilon_3\le c_3 h^{1+\kappa}$, and
$P\ge c_P h^{-2(1+\kappa)}$, so $P^{-1/2}\le c_P^{-1/2}h^{1+\kappa}$.
Applying Lemma~\ref{lem:fournet_L2_bound}~(i) gives
$\|g-\widehat g\|_{L_2}^2\le C h^{1+\kappa}$, hence $\|g-\widehat g\|_{L_2}\le C h^{(1+\kappa)/2}$.
Applying Lemma~\ref{lem:fournet_L2_bound}~(ii) gives \eqref{eq:pointwise_g}.

\section{Proof of Lemma~\ref{lemma:stability}}
\label{app:stability}
By \eqref{eq:sum_ghat}, there exists $h_0>0$ such that for all $0<h\le h_0$,
\begin{equation}
\label{eq:mass_bound}
\Delta s\,\Delta b\mysum_{l,d}\varphi_{l,d}\big|\widehat g_{\,l-k,\,d-j}\big|
\;\le\;1+\varepsilon(h) \le e^{\varepsilon(h)},
\quad\text{for all } (s_k,b_j)\in\Omega_{\myin},
\end{equation}
with $\varepsilon(h)=C_\varepsilon h^{1+\kappa}$ and $C_\varepsilon$ independent of $h$. Define
\[
\kappa_{\max}
:=\frac{1}{\Delta t}\max\left\{\ln|G(-i,0;\Delta t)|,\ \ln|G(0,-i;\Delta t)|,\
\ln|G(-i,-i;\Delta t)|,\ 0\right\}.
\]
We claim that, for $m=0,\ldots,M$,
\begin{equation}
\label{eq:key}
\|V_c^{m,-}\|_\infty \le e^{(M-m)\,(\Delta t\,\kappa_{\max}+ \varepsilon(h))}\,\|V_c^{M,-}\|_\infty.
\end{equation}
The base case $m=M$ is immediate.

Assume \eqref{eq:key} holds at $m+1$.
For boundary nodes $(s_k,b_j)\in\Omega_{\myout}$, \eqref{eq:num_bd_out} and \eqref{eq:ax} give
\[
|V_{k,j,c}^{m,-}|=|V_{k,j,c}^{m,+}|
=|G(-i\,a(s_k,b_j);\Delta t)|\,|V_{k,j,c}^{m+1,-}|
\le e^{\Delta t\,\kappa_{\max}}\,\|V_c^{m+1,-}\|_\infty .
\]
For interior nodes $(s_k,b_j)\in\Omega_{\myin}$, \eqref{eq:num_interior} and \eqref{eq:mass_bound} yield
\[
|V_{k,j,c}^{m,+}|
\le
\Big(\Delta s\,\Delta b\mysum_{l,d}\varphi_{l,d}|\widehat g_{\,l-k,\,d-j}|\Big)\,\|V_c^{m+1,-}\|_\infty
\le e^{\varepsilon(h)}\,\|V_c^{m+1,-}\|_\infty .
\]
The intervention step \eqref{eq:num_intervention} uses bilinear interpolation with nonnegative
weights summing to one, followed by a max over controls, hence it is non-expansive in $\ell_\infty$:
$\|V_c^{m,-}\|_\infty\le \|V_c^{m,+}\|_\infty$. Therefore
\[
\|V_c^{m,-}\|_\infty \le e^{\Delta t\,\kappa_{\max}+\varepsilon(h)}\,\|V_c^{m+1,-}\|_\infty,
\]
which, combined with the induction hypothesis, implies \eqref{eq:key}.

Next,
\begin{equation}
\label{eq:bound_1}
\|V_c^{M,-}\|_\infty
\le
\sup_{x\in\Omega,\;w\in\Gamma}|V(x,w,T^-)| <\infty,
\end{equation}
since $\Omega\times\Gamma$ is compact and $V(\cdot,\cdot,T^-)$ from \eqref{eq:terminal} is continuous.
Choose $h_0$ smaller if needed so that $M\,\varepsilon(h)\le 1$ for $0<h\le h_0$. Then for all $m$,
\begin{equation}
\label{eq:bound_2}
e^{(M-m)(\Delta t\,\kappa_{\max}+ \varepsilon(h))}
\le e^{T\,\kappa_{\max}+1}.
\end{equation}
Combining \eqref{eq:key}--\eqref{eq:bound_2} yields the stated uniform $\ell_\infty$ bound.

\section{Proof of Lemma~\ref{lemma:consistency}}
\label{app:consistency}
\begin{proof}
The equality $\mathcal{D}_h=\mathcal{D}$ is immediate at $m=M$ (terminal payoff) and on $\Omega_{\myout}^h$ for $m = M-1, \ldots, 0$. We therefore focus on
the interior.

Fix $(s_k,b_j)\in\Omega_{\myin}^h$ and write $x_{k,j}=(s_k,b_j)$.
Let $\phi_{m+1}(y):=\phi(y,w_c,t_{m+1}^-)$ and define the continuous propagator
\[
\mathcal{Q}(x):=
{\myblue{\mathbf{1}_{\Omega_{\myin}}(x)\int_{\Omega}\phi_{m+1}(y)\,g(y-x;\Delta t)\,dy
+\mathbf{1}_{\Omega_{\myout}}(x)\,G(-i\,a(x);\Delta t)\,\phi_{m+1}(x)}},
\]
\[
\widetilde{\mathcal{Q}}(x):=
{\myblue{\mathbf{1}_{\Omega_{\myin}}(x)\int_{\Omega}\phi_{m+1}(y)\,\widehat g(y-x;\widehat\theta^\star)\,dy
+\mathbf{1}_{\Omega_{\myout}}(x)\,G(-i\,a(x);\Delta t)\,\phi_{m+1}(x)}}.
\]
Define the discrete propagation values (using the same $\widehat g$ on $\Omega_{\myin}^h$
and the boundary update on $\Omega_{\myout}^h$)
\begin{align*}
\mathcal{Q}_h(x_{l',d'}) &:=
{\myblue{\mathbf{1}_{\Omega_{\myin}^h}(x_{l',d'})}}\,
\Delta s\,\Delta b\sum_{l,d}\varphi_{l,d}\,
\phi(\widehat{x}^{m+1,-}_{l,d,c})\,
\widehat g\big(y_{l,d}-x_{l',d'};\widehat\theta^\star\big)
\\
& \qquad \;+\;
{\myblue{\mathbf{1}_{\Omega_{\myout}^h}(x_{l',d'})\,G(-i\,a(x_{l',d'});\Delta t)\,
\phi(\widehat{x}^{m+1,-}_{l',d',c})}},
\qquad x_{l',d'}\in\Omega^h,
\end{align*}
and let $\widetilde{\mathcal{Q}}_h(x_{l',d'}):=\widetilde{\mathcal{Q}}(x_{l',d'})$ denote nodal samples of $\widetilde{\mathcal{Q}}$ on $\Omega^h$.

With $x^{m,+}_{k,j}(u):=x^+(x_{k,j},q_m,u) \equiv (s_{k,j}^{m, +}(u),\,b_{k,j}^{m, +}(u))$, $\mathcal{D}(\cdot)$ and $\mathcal{D}_h(\cdot)$ \mbox{can be written as}
\begin{align*}
\mathcal{D}\big(\widehat{x}^{m,-}_{k,j,c},\,\phi^{m+1,-}\big)
&=\sup_{u\in\mathcal Z}\,\mathcal{Q}\big(x^{m,+}_{k,j}(u)\big),
\\
\mathcal{D}_h\big(\widehat{x}^{m,-}_{k,j,c},\,\{\phi(\widehat{x}^{m+1,-}_{l,d,c})\}_{l,d}\big)
&=\max_{u\in\mathcal Z_h}\;
\mathcal{I}\big[\big\{\mathcal{Q}_h(x_{l',d'})\big\}_{l', d'}\big]\!\big(x^{m,+}_{k,j}(u)\big).
\end{align*}
For any $u\in\mathcal Z_h$, set $\xi:=x^{m,+}_{k,j}(u)$. Using bilinear interpolation,
\begin{align}
\big|\mathcal{I}[\mathcal{Q}_h](\xi)\!-\!\mathcal{Q}(\xi)\big|
\le
\big|\mathcal{I}[\mathcal{Q}_h](\xi) \!-\! \mathcal{I}[\widetilde{\mathcal{Q}}_h](\xi)\big|
\!+\!\big|\mathcal{I}[\widetilde{\mathcal{Q}}_h](\xi)\!-\!\widetilde{\mathcal{Q}}(\xi)\big|
\!+\!\big|\widetilde{\mathcal{Q}}(\xi)\!-\!\mathcal{Q}(\xi)\big|.
\label{eq:cons_core_decomp}
\end{align}

\emph{Quadrature term.}
Since $y\mapsto \phi_{m+1}(y)\widehat g(y-x)$ is smooth on $\Omega$ with uniformly bounded
second derivatives (Gaussian mixture kernel; $\phi$ smooth), the 2-D composite trapezoid
rule gives, for each ${\myblue{x_{l',d'}\in\Omega_{\myin}^h}}$,
\[
\big|\mathcal{Q}_h(x_{l',d'})-\widetilde{\mathcal{Q}}_h(x_{l',d'})\big|\le C_q h^2,
\]
{\myblue{and for $x_{l',d'}\in\Omega_{\myout}^h$ the difference is $0$ by construction.}}
Hence
\[
\big\|\mathcal{Q}_h-\widetilde{\mathcal{Q}}_h\big\|_{L_\infty(\Omega)}\le C_q h^2,
\quad\Rightarrow\quad
\big|\mathcal{I}[\mathcal{Q}_h](\xi)- \mathcal{I}[\widetilde{\mathcal{Q}}_h](\xi)\big|\le C_q h^2.
\]

\emph{Interpolation term.}
{\myblue{On each region where its definition is smooth (in particular on $\Omega_{\myin}$ and on each boundary subdomain where $a(\cdot)$ is constant),}
$\widetilde{\mathcal{Q}}$ has bounded second derivatives}, hence bilinear
interpolation yields
\begin{equation}
\label{eq:cons_quad}
\big|\mathcal{I}[\widetilde{\mathcal{Q}}_h](\xi)-\widetilde{\mathcal{Q}}(\xi)\big|
\;\le\;C_{\mathrm{int}}\,h^2,
\end{equation}
uniformly in $\xi\in\Omega$, with  bounded constant $C_{\mathrm{int}}>0$ independent of $h$.

\emph{Kernel mismatch term.}
{\myblue{If $\xi\in\Omega_{\myout}$, then $\widetilde{\mathcal{Q}}(\xi)=\mathcal{Q}(\xi)$ by definition. If $\xi\in\Omega_{\myin}$, using the global pointwise estimate
$\|\widehat g-g\|_{L_\infty(\Rbb^2)}=O(h^{1+\kappa})$ (Assumption~\ref{ass:training_regime}), we have}}
$\big|\widetilde {\mathcal{Q}}(\xi)-{\mathcal{Q}}(\xi)\big| = \ldots$
\begin{equation}
\label{eq:cons_kernel}
\ldots=\Big|\!\!\int_{\Omega}\!\!\phi_{m+1}(y)\big(\widehat g-g\big)(y-\xi)\,dy\Big|
\!\le\! \|\phi_{m+1}\|_{L_\infty(\Omega)}\,|\Omega|\,\|\widehat g-g\|_{L_\infty(\Rbb^2)}
\!=\! C_g\,h^{1+\kappa}.
\end{equation}
Combining \eqref{eq:cons_core_decomp}--\eqref{eq:cons_kernel} and taking the max over $u\in\mathcal Z_h$ gives
\begin{equation}
\label{eq:cons_same_grid}
\Big|\max_{u\in\mathcal Z_h}\mathcal{I}[\mathcal{Q}_h]\big(x^{m,+}_{k,j}(u)\big)
-\max_{u\in\mathcal Z_h}\mathcal{Q}\big(x^{m,+}_{k,j}(u)\big)\Big|
\;\le\;C_q h^2+C_{\mathrm{int}}h^2+C_g h^{\,1+\kappa}.
\end{equation}
Finally, since $u\mapsto x^{m,+}_{k,j}(u)$ is Lipschitz on $[0,1]$ and $x\mapsto\mathcal Q(x)$ is Lipschitz on $\Omega$,
the control discretization satisfies
\begin{equation}
\label{eq:cons_control}
\Big|\sup_{u\in\mathcal Z}\mathcal{Q}\big(x^{m,+}_{k,j}(u)\big)
-\max_{u\in\mathcal Z_h}\mathcal{Q}\big(x^{m,+}_{k,j}(u)\big)\Big|
\;\le\; C_u\,\Delta u
=\mathcal O(h).
\end{equation}
Adding \eqref{eq:cons_same_grid} and \eqref{eq:cons_control} yields the stated
$\mathcal O(h+h^{1+\kappa}+h^2)$ bound.
\end{proof}


\small
\begin{thebibliography}{10}

\bibitem{AbramowitzStegun1972}
{\sc M.~Abramowitz and I.~A. Stegun}, {\em {Handbook of mathematical
  functions}}, {Dover, New York}, 1972.

\bibitem{barles-souganidis:1991}
{\sc G.~Barles and P.~Souganidis}, {\em Convergence of approximation schemes
  for fully nonlinear equations}, Asymptotic Analysis, 4 (1991), pp.~271--283.

\bibitem{barndorff1997normal}
{\sc O.~E. Barndorff-Nielsen}, {\em Normal inverse gaussian distributions and
  stochastic volatility modelling}, Scandinavian Journal of statistics, 24
  (1997), pp.~1--13.

\bibitem{BenTalTeboulle2007}
{\sc A.~Ben-Tal and M.~Teboulle}, {\em An old-new concept of convex risk
  measures: The optimized certainty equivalent}, Mathematical Finance, 17
  (2007), pp.~449--476.

\bibitem{carr2002fine}
{\sc P.~Carr, H.~Geman, D.~B. Madan, and M.~Yor}, {\em The fine structure of
  asset returns: An empirical investigation}, The Journal of Business, 75
  (2002), pp.~305--332.

\bibitem{chen08a}
{\sc Z.~Chen and P.~A. Forsyth}, {\em A numerical scheme for the impulse
  control formulation for pricing variable annuities with a {Guaranteed Minimum
  Withdrawal Benefit (GMWB)}}, Numerische Mathematik, 109 (2008), pp.~535--569.

\bibitem{dang2026multi}
{\sc D.-M. Dang and C.~Chen}, {\em Multi-period mean-buffered probability of
  exceedance in {Defined Contribution} portfolio optimization}, SIAM Journal on
  Financial Mathematics,  (2026).
\newblock to appear.

\bibitem{DangForsyth2014}
{\sc D.-M. Dang and P.~Forsyth}, {\em Continuous time mean-variance optimal
  portfolio allocation under jump diffusion: A numerical impulse control
  approach}, Numerical Methods for Partial Differential Equations, 30 (2014),
  pp.~664--698.

\bibitem{dang2016better}
{\sc D.-M. Dang and P.~A. Forsyth}, {\em Better than pre-commitment
  mean-variance portfolio allocation strategies: A semi-self-financing
  {Hamilton--Jacobi--Bellman} equation approach}, European Journal of
  Operational Research, 250 (2016), pp.~827--841.

\bibitem{dang2025monotone}
{\sc D.-M. Dang and H.~Zhou}, {\em A monotone piecewise constant control
  integration approach for the two-factor uncertain volatility model}, IMA
  Journal of Numerical Analysis,  (2025),
  \url{https://doi.org/10.1093/imanum/draf095}.

\bibitem{du2025fourier}
{\sc R.~Du and D.-M. Dang}, {\em Fourier neural network approximation of
  transition densities in finance}, SIAM Journal on Scientific Computing, 47
  (2025), pp.~C529--C557.

\bibitem{Fang2008}
{\sc F.~Fang and C.~Oosterlee}, {\em A novel pricing method for {E}uropean
  options based on {F}ourier-{C}osine series expansions}, SIAM Journal on
  Scientific Computing, 31 (2008), pp.~826--848.

\bibitem{filippi2020conditional}
{\sc C.~Filippi, G.~Guastaroba, and M.~G. Speranza}, {\em Conditional
  value-at-risk beyond finance: a survey}, International Transactions in
  Operational Research, 27 (2020), pp.~1277--1319.

\bibitem{forsyth2020multiperiod}
{\sc P.~A. Forsyth}, {\em Multiperiod mean {Conditional Value at Risk} asset
  allocation: Is it advantageous to be time consistent?}, SIAM Journal on
  Financial Mathematics, 11 (2020), pp.~358--384.

\bibitem{ken1999levy}
{\sc S.~Ken-Iti}, {\em L{\'e}vy processes and infinitely divisible
  distributions}, vol.~68, Cambridge University Press, 1999.

\bibitem{kinga2015method}
{\sc D.~Kinga, J.~B. Adam, et~al.}, {\em A method for stochastic optimization},
  in International conference on learning representations (ICLR), vol.~5,
  California;, 2015.

\bibitem{kleindorfer2005managing}
{\sc P.~R. Kleindorfer and G.~H. Saad}, {\em Managing disruption risks in
  supply chains}, Production and Operations Management, 14 (2005), pp.~53--68.

\bibitem{kou01}
{\sc S.~Kou}, {\em A jump diffusion model for option pricing}, {M}anagement
  {S}cience, 48 (2002), pp.~1086--1101.

\bibitem{labadie2004optimal}
{\sc J.~W. Labadie}, {\em Optimal operation of multireservoir systems:
  State-of-the-art review}, Journal of Water Resources Planning and Management,
  130 (2004), pp.~93--111.

\bibitem{lu2024semi}
{\sc Y.~Lu and D.-M. Dang}, {\em {A semi-Lagrangian $\varepsilon$-monotone
  Fourier method for continuous withdrawal GMWBs under jump-diffusion with
  stochastic interest rate}}, Numerical Methods for Partial Differential
  Equations, 40 (2024), p.~e23075.

\bibitem{MaForsyth2015}
{\sc K.~Ma and P.~Forsyth}, {\em An unconditionally monotone numerical scheme
  for the two-factor uncertain volatility model}, IMA Journal of Numerical
  Analysis, 37(2) (2017), pp.~905--944.

\bibitem{bpoe2018}
{\sc A.~Mafusalov and S.~Uryasev}, {\em Buffered probability of exceedance:
  Mathematical properties and optimization}, SIAM Journal on Optimization, 28
  (2018), pp.~1077--1103.

\bibitem{mclachlan2019finite}
{\sc G.~J. McLachlan, S.~X. Lee, and S.~I. Rathnayake}, {\em Finite mixture
  models}, Annual review of statistics and its application, 6 (2019),
  pp.~355--378.

\bibitem{merton1976option}
{\sc R.~C. Merton}, {\em Option pricing when underlying stock returns are
  discontinuous}, Journal of financial economics, 3 (1976), pp.~125--144.

\bibitem{miller2017optimal}
{\sc C.~Miller and I.~Yang}, {\em Optimal control of conditional value-at-risk
  in continuous time}, SIAM Journal on Control and Optimization, 55 (2017),
  pp.~856--884.

\bibitem{oberman2006convergent}
{\sc A.~M. Oberman}, {\em Convergent difference schemes for degenerate elliptic
  and parabolic equations: Hamilton--jacobi equations and free boundary
  problems}, SIAM Journal on Numerical Analysis, 44 (2006), pp.~879--895.

\bibitem{RT2000}
{\sc R.~Rockafellar and S.~Uryasev}, {\em Optimization of conditional
  value-at-risk}, Journal of Risk, 2 (2000), pp.~21--41.

\bibitem{rockafellar2000optimization}
{\sc R.~T. Rockafellar, S.~Uryasev, et~al.}, {\em Optimization of conditional
  value-at-risk}, Journal of risk, 2 (2000), pp.~21--42.

\bibitem{ruijter2012two}
{\sc M.~J. Ruijter and C.~W. Oosterlee}, {\em Two-dimensional {Fourier} cosine
  series expansion method for pricing financial options}, SIAM Journal on
  Scientific Computing, 34 (2012), pp.~B642--B671.

\bibitem{stein2011fourier}
{\sc E.~M. Stein and R.~Shakarchi}, {\em Fourier analysis: an introduction},
  vol.~1, Princeton University Press, 2011.

\bibitem{tran2019convergence}
{\sc P.~T. Tran et~al.}, {\em On the convergence proof of amsgrad and a new
  version}, IEEE Access, 7 (2019), pp.~61706--61716.

\bibitem{yosida1968functional}
{\sc K.~Yosida}, {\em Functional analysis}, vol.~123, Springer Science \&
  Business Media, 2012.

\bibitem{zhang2024monotone}
{\sc H.~Zhang and D.~Dang}, {\em A monotone numerical integration method for
  mean-variance portfolio optimization under jump-diffusion models},
  Mathematics and Computers in Simulation, 219 (2024), pp.~112--140.

\bibitem{zhou2025numerical}
{\sc H.~Zhou and D.-M. Dang}, {\em Numerical analysis of {American} option
  pricing in a two-asset jump-diffusion model}, Applied Numerical Mathematics,
  216 (2025), pp.~98--126.

\end{thebibliography}

\vfill
\end{document}